\renewcommand{\UNCERTAIN}{{\mathcal W}}
\renewcommand{\PRIMAL}{{\mathcal X}}
\renewcommand{\DUAL}{{\mathcal Y}}
\newcommand{\MinkowskiFunctional}[1]{m_{#1}}
\newcommand{\UppTimes}{\stackops{\cdot}{\times}{-0.2ex}}
\newcommand{\LowTimes}{\stackops{\cdot}{\times}{-2.5ex}}
\newcommand{\Group}{{\mathcal A}} 
\newcommand{\barGroup}{\overline{\Group}}
\newcommand{\group}{a}
\newcommand{\groupbis}{b}
\newcommand{\groupter}{c}
\newcommand{\GroupOperation}{*}
\newcommand{\UppGroupOperation}{\stackops{\cdot}{\GroupOperation}{-0.2ex}}
\newcommand{\LowGroupOperation}{\stackops{\cdot}{\GroupOperation}{-2.0ex}}
\newcommand{\UnitElement}{e}
\newcommand{\valley}{v}
\renewcommand{\TripleNorm}{\DoubleNorm}
\renewcommand{\TripleNormDual}{\DoubleNormDual}
\newcommand{\BipolarFunction}{bipolar function}
\newcommand{\BipolarSet}{bipolar set}
\newcommand{\GSubset}{\Lambda}
\newcommand{\CCSubset}{\Gamma}
\newcommand{\PHCCSubset}{\CCSubset_{\mathtt{H}}}
\newcommand{\GLower}[3]{#1\nc{#2}}
\newcommand{\GreatestIn}[3]{{#1\nc{#2}^{\top}}}
\newcommand{\closedconvexcone}{\overline{\mathrm{cone}}}
\newcommand{\BIPOLARSET}{\mathcal{B}p[\PRIMAL]}
\newcommand{\BIPOLARFUNC}{\mathcal{B}p[\barRR^\PRIMAL]} 
\title{A Unified View of Polarity for Functions}
\author[1]{Jean-Philippe Chancelier}
\author[1]{Michel De Lara}
\affil[1]{CERMICS, \'Ecole nationale des ponts et chaussées, IP Paris, France}
\begin{document}

\maketitle


\begin{abstract}
We propose a unified view of the polarity of functions,
that encompasses all specific definitions, generalizes several well-known properties
and provides new results.
We show that bipolar sets and bipolar functions are isomorphic lattices.
Also, we explore three possible notions
of polar subdifferential associated with a nonnegative function,
and we make the connection with the notion of alignement of vectors. 
\end{abstract}

\section{Introduction}
\label{Introduction}

The introduction of~\cite[Chapter~11, Sect.~E]{Rockafellar-Wets:1998} reads as
follows: \emph{While most of the major duality correspondences, like convex sets
  versus sub-linear functions, or polarity of convex cones, ﬁt directly within
  the framework of conjugate convex functions as in 11.4, others, like polarity
  of convex sets that aren’t necessarily cones but contain the origin, ﬁt
  obliquely.}  We feel that this ``obliquely'' has to do with the many different
ways one finds to define the polar of a function in the literature, especially
by restricting the definitions to special classes of functions.  In this paper,
we propose a unified view of the polarity of functions,
that encompasses all specific definitions, generalizes several well-known properties
and provides new results, especially an isomorphism between the lattices of bipolar sets
and of bipolar functions. 

Given a pair $(\PRIMAL, \DUAL)$ of (real) vector spaces equipped with a bilinear
functional \( \nscal{\,}{} \colon \PRIMAL \times \DUAL \to \RR \), the polar operation
is defined without ambiguity over subsets (see
\cite[Section~14]{Rockafellar:1970}, \cite[Chapter~11,
Sect.~E]{Rockafellar-Wets:1998}, \cite[\S~5.16]{Aliprantis-Border:2006}).  The
(negative, or one-sided) {polar set} of a subset~$\Primal\subset\PRIMAL$ is the closed
convex set
\( \PolarSet{\Primal}= \defset{ \dual \in \DUAL }{ \nscal{\primal}{\dual} \leq 1 \eqsepv \forall
  \primal \in \Primal } \).  The situation is not as clear-cut for the polar operation
defined over functions. We present the main approaches now.

The polar of a function~$\fonctionprimal \colon \RR^{\spacedim} \to \barRR$ is
defined in \cite[Section~15]{Rockafellar:1970}, entitled \emph{Polars of convex
  functions}, as follows: first, for nonnegative positively $1$-homogeneous
convex 
functions vanishing at the origin (so-called gauges)
in~\cite[p.~128]{Rockafellar:1970} by the formula
\( \Polarity{\fonctionprimal}\np{\dual}= \inf \bset{\lambda \ge 0}{\nscal{\primal}{\dual} \leq \lambda
  \fonctionprimal\np{\primal}, \forall \primal\in\RR^{\spacedim}} \) and, second, extended to
nonnegative convex functions vanishing at the origin
in~\cite[p.~136]{Rockafellar:1970} by the formula
\( \Polarity{\fonctionprimal}\np{\dual}= \inf \big\{ \lambda \ge 0 \,\vert\,
\nscal{\primal}{\dual} \leq 1+\lambda \fonctionprimal\np{\primal},\forall \primal\in\RR^{\spacedim} \big\}
\).  It is shown that the two
definitions are equivalent for gauges, and that, if
$\fonctionprimal \colon \RR^{\spacedim} \to \barRR$ is nonnegative convex and
vanishes at the origin, then the bipolar~$\biPolarity{\fonctionprimal}$ is the
greatest nonnegative lsc (lower semicontinuous) convex function vanishing at the origin which is
majorized by~$\fonctionprimal$
($\biPolarity{\fonctionprimal} = \mathrm{cl} \fonctionprimal$).
Other formulas are related to polarity, like
the transform~${\cal A}$ for nonnegative convex lsc
functions vanishing at zero \cite[Equation~(2) and
after]{Artstein-Avidan-Milman:2011}, with expression
\( \np{{\cal A}\fonctionprimal}\np{\dual}= \sup_{\primal\in\RR^{\spacedim}}
\bp{\nscal{\primal}{\dual}-1} / {\fonctionprimal\np{\primal}} \), and the perspective-polar
transform, for closed proper convex nonnegative functions
\cite[Equation~(4.1)]{Aravkin-Burke-Drusvyatskiy-Friedlander-MacPhee:2018}
(see other examples in \cite[Remark~5.1]{Martinez-Legaz-Singer:1994}). 
In the formulas defining polar functions, and in derived formulas, one is often
embarrassed with the treatment of $0$ and $+\infty$ values, especially in fraction
terms. For instance, the expression \( \Polarity{\fonctionprimal}\np{\dual}=
\sup_{\primal\neq 0}
\nscal{\primal}{\dual} / {\fonctionprimal\np{\primal}} \)
in~\cite[p.~128]{Rockafellar:1970} is valid when $\fonctionprimal$ is finite everywhere
and positive except at the origin.
By contrast, there is no problem with the treatment of $0$ and $+\infty$ values
with the way that the polar operation is defined on
nonnegative functions in~\cite[\S~4]{Martinez-Legaz-Singer:1994} by the formula
\( \Polarity{\fonctionprimal}\np{\dual} = \sup_{\primal \in \PRIMAL } \Bp{
  {\nscal{\primal}{\dual}}_+ \LowTimes \bpConverse{\fonctionprimal\np{\primal} } } \) (we
will explain the~$\LowTimes$ later).  To our knowledge, this is the most general
formula as it is defined for any nonnegative function (without restriction to
positively $1$-homogeneous, or convex or vanishing at zero).
\medskip

In this paper, we propose a unified view of the polarity of functions by
proposing a definition of the polar of any function, and then revisit the polar
operation on functions as one would do following the tracks of the Fenchel
transform.  Indeed, the Fenchel transform is defined for any function, and then
closed convex lsc functions (we follow the terminology
in~\cite[p.~15]{Rockafellar:1974}) appear as the class of functions that are
equal to their biconjugate (bi-Fenchel transform).
We do the same by defining the polar transfom for any function,
and then bipolar functions will be defined as those equal to their bipolar
transform. 
\medskip

The paper is organized as follows.
In~Section~\ref{Background_on_set_polarity_and_on_Minkowski_functional},
we present set polarity and the Minkowski functional.
We also introduce \BipolarSet s and show that they form a lattice.
In~Section~\ref{sec:Polar_operation_on_functions},
we develop the polar operation on functions.
First, 
in~\S\ref{Polar_operation_on_nonnegative_functions},
we follow the approach taken in~\cite[\S4]{Martinez-Legaz-Singer:1994}:
we recall the definition of the polar of any nonnegative function
(the embarrassement with $0$ and $+\infty$ values is systematically
handled by means of lower~$ \LowTimes$ and upper~$\UppTimes$ multiplications);
we recall that the polar operation is a $\times$-duality as
in~\cite[\S~4]{Martinez-Legaz-Singer:1994};
we show that the polar of any nonnegative function is a support function
(of the polar set of the 0-level set of the Fenchel conjugate).
Second,
in~\S\ref{Polar_operation_on_functions}, we provide a definition of the polar of any function,
we present several properties and 
we show that the polar of any function is a Minkowski functional
(of the 0-level set of the Fenchel conjugate).
In~Section~\ref{Generalized_circ-convex_functions}, we present equivalent
expressions of the set of bipolar functions, that is,
those equal to their bipolar.
We show that the lattices of bipolar sets
and of bipolar functions are isomorphic. 
In~Section~\ref{Subdifferential_of_the_circ-polarity}, we add
to~\cite[\S~4]{Martinez-Legaz-Singer:1994} by exploring three possible notions
of polar subdifferential associated with a nonnegative function.
We make the connection with the notion of alignement of vectors. 
%
%
In Appendix~\ref{app:best_cvx_general}, we study best lsc convex lower
approximations of a function. 
In Appendix~\ref{Background_on_*-dualities}, we provide 
background on *-dualities, as defined and studied in
\cite{Martinez-Legaz-Singer:1994}.

\section{Set polarity and the Minkowski functional}
\label{Background_on_set_polarity_and_on_Minkowski_functional} 

In~\S\ref{Definitions_and_background_on_functions},
we recall notions related to functions\footnote{%
  Adopting usage in mathematics, we follow Serge Lang and use ``function'' only to
  refer to mappings in which the codomain is a set of numbers (i.e. a subset
  of~$\RR$ or $\CC$, or their possible extensions with $\pm \infty$),
  and reserve the term mapping for more general
  codomains.} and to the Fenchel conjugacy.
In~\S\ref{Background_on_set_polarity},
we provide background on set polarity, and we also introduce the notion of \BipolarSet,
and show that \BipolarSet s form a lattice.
In~\S\ref{Background_on_Minkowski_functional}, 
we provide background on $1$-homogeneous functions and on the Minkowski functional.

\subsection{Background on functions}
\label{Definitions_and_background_on_functions}

We denote $\barRR = \ClosedIntervalClosed{-\infty}{+\infty} $,
$\RR_{+} = \ClosedIntervalOpen{0}{+\infty} $,
$\RR_{++}=\OpenIntervalOpen{0}{+\infty}$.
The \emph{positive part}~$z_+$ of a real number~$z$ is \( z_+=\max\na{z,0} \).

For any set~$\UNCERTAIN$ and any 
function~$\fonctionuncertain \colon \UNCERTAIN \to \barRR$,
we introduce different possible notations for 
\begin{subequations}
  \begin{align}
    \text{the \emph{level sets}}\qquad 
    \MidLevelSet{\fonctionuncertain}{r}
    =  \LevelSetUp{\fonctionuncertain}{r} 
    &= 
      \defset{ \uncertain \in \UNCERTAIN }{ \fonctionuncertain\np{\uncertain} \leq r}
      \eqsepv \forall r \in \barRR
      \eqfinv
      \label{eq:fonctionuncertain_level_set}
    \\
    \text{the \emph{strict level sets}}\qquad 
    \MidStrictLevelSet{\fonctionuncertain}{r} 
    =  \StrictLevelSetUp{\fonctionuncertain}{r} 
    &= 
      \defset{ \uncertain \in \UNCERTAIN }{ \fonctionuncertain\np{\uncertain} < r}
      \eqsepv \forall r \in \barRR
      \eqfinv
      \label{eq:fonctionuncertain_strict_level_set}
    \\
    \text{the \emph{level curves}}\qquad 
    \MidLevelCurve{\fonctionuncertain}{r}
    =  \LevelCurveUp{\fonctionuncertain}{r} 
    &= 
      \defset{ \uncertain \in \UNCERTAIN }{ \fonctionuncertain\np{\uncertain} = r }
      \eqsepv \forall r \in \barRR
      \eqfinv
      \label{eq:fonctionuncertain_level_curve}
    \\
    \text{and also}\qquad 
    \ba{s < \fonctionuncertain < r}
    &= 
      \defset{ \uncertain \in \UNCERTAIN }{s < \fonctionuncertain\np{\uncertain} < r }
      \eqsepv \forall s, r \in \barRR
      \eqfinp       
  \end{align}
\end{subequations}

For any function \( \fonctionuncertain \colon \UNCERTAIN \to \barRR \), its
\emph{epigraph} is
\( \epigraph\fonctionuncertain= \defset{
  \np{\uncertain,t}\in\UNCERTAIN\times\RR}%
{\fonctionuncertain\np{\uncertain} \leq t} \), its \emph{strict epigraph} is
\( \epigraph_{s}\fonctionuncertain= \defset{
  \np{\uncertain,t}\in\UNCERTAIN\times\RR}%
{\fonctionuncertain\np{\uncertain} < t} \), its \emph{effective domain} is
\( \dom\fonctionuncertain= \defset{\uncertain\in\UNCERTAIN}{
  \fonctionuncertain\np{\uncertain} <+\infty} \).  A function
\( \fonctionuncertain \colon \UNCERTAIN \to \barRR \) is said to be \emph{convex}
if its epigraph is a convex set, \emph{proper} if it never takes the
value~$-\infty$ and that \( \dom\fonctionuncertain \not = \emptyset \), \emph{lower semi
  continuous (lsc)} if its epigraph is closed.

For any set~$\UNCERTAIN$ and subset \( \Uncertain \subset \UNCERTAIN \), we denote by
$\Indicator{\Uncertain} \colon \UNCERTAIN \to \barRR $ the \emph{indicator
  function} of the set~$\Uncertain$, defined by
\( \Indicator{\Uncertain}\np{\uncertain} = 0 \) if
\( \uncertain \in \Uncertain \), and
\( \Indicator{\Uncertain}\np{\uncertain} = +\infty \) if
\( \uncertain \not\in \Uncertain \).
We denote by $\chi_{\Uncertain} \colon \UNCERTAIN \to \barRR_{+} $ the
\emph{generalized indicator function} of the set~$\Uncertain$
\cite[Definition~2.2]{Martinez-Legaz-Singer:1994}, defined by
\( \Characteristic{\Uncertain}\np{\uncertain} = 1 \) if
\( \uncertain \in \Uncertain \), and
\( \Characteristic{\Uncertain}\np{\uncertain} = +\infty \) if
\( \uncertain \not\in \Uncertain \).  Thus, we have that
\( \Characteristic{\Uncertain}=\Indicator{\Uncertain}+1 \).

\subsection{Set polarity}
\label{Background_on_set_polarity}

In~\S\ref{Dual_pair,_paired_vector_spaces}, we recall the notions of dual pair
and paired vector spaces.
In~\S\ref{Polar_of_a_set,_bipolar_set}, we recall the
definition of the polar of a set, and we introduce the notion of \BipolarSet.
In~\S\ref{The_lattice_of_BipolarSet_s}, we show that \BipolarSet s form a
lattice.

\subsubsection{Dual pair, paired vector spaces, Fenchel conjugacy}
\label{Dual_pair,_paired_vector_spaces}

We refer the reader to \cite{Aliprantis-Border:2006} and~\cite{Rockafellar:1974}
for the following backgrounds.  We consider a pair $(\PRIMAL, \DUAL)$ of (real)
vector spaces equipped with a bilinear functional
\( \nscal{\,}{} \colon \PRIMAL \times \DUAL \to \RR \). 
Following~\cite[p.~13]{Rockafellar:1974}, we say that
$\PRIMAL$ and $\DUAL$ are \emph{paired spaces}, when they have been equipped with
topologies that are {compatible} with respect to the pairing
(hence Hausdorff and locally convex topologies).
Details on how to generate consistent (or compatible) topologies 
\cite[Definition~5.96]{Aliprantis-Border:2006} from a pairing is developed
in~\cite[\S~5.15]{Aliprantis-Border:2006}.  More precisely, $\PRIMAL$ and $\DUAL$ is
called a \emph{dual pair} in~\cite[Definition 5.90]{Aliprantis-Border:2006} when
the bilinear functional \( \nscal{\,}{}\) separates the points of $\PRIMAL$ and
$\DUAL$. Then, it is proved
in~\cite[Theorem~5.93]{Aliprantis-Border:2006} that, from a dual pair $\PRIMAL$ and $\DUAL$,
one makes paired spaces
when $\PRIMAL$ (resp. $\DUAL$) is equipped with the weak topology
$\sigma(\PRIMAL,\DUAL)$ (resp. with the weak$*$ topology
$\sigma(\DUAL,\PRIMAL)$).
%
\medskip

Now, we review concepts and 
notations related to the Fenchel conjugacy
(we refer the reader to \cite[Sect.~3]{Rockafellar:1974}).
We consider $\PRIMAL$ and $\DUAL$ two paired vector spaces.
For any functions \( \fonctionprimal \colon \PRIMAL  \to \barRR \)
and \( \fonctiondual \colon \DUAL \to \barRR \), 
the different conjugates are defined by\footnote{%
  In convex analysis, one does not use~\( \LFMr{} \) and~\( \LFMbi{} \), 
  but simply~\( \LFM{} \) and~\( ^{\Fenchelcoupling\Fenchelcoupling} \).
  We use~\( \LFMbi{} \) to be consistent with the notation for general
  conjugacies.
\label{ft:to_be_consistent_with_the_notation_for_general_conjugacies}}
\begin{subequations}
  \begin{align}
    \LFM{\fonctionprimal}\np{\dual} 
    &= 
      \sup_{\primal \in \PRIMAL} \bp{ \nscal{\primal}{\dual}
      -\fonctionprimal\np{\primal} }
      \eqsepv \forall \dual \in \DUAL
      \eqfinv
      \label{eq:Fenchel_conjugate}
    \\
    \LFMr{\fonctiondual}\np{\primal} 
    &= 
      \sup_{ \dual \in \DUAL} \bp{ \nscal{\primal}{\dual} 
      -\fonctiondual\np{\dual} } 
      \eqsepv \forall \primal \in \PRIMAL
      \eqfinv
      \label{eq:Fenchel_conjugate_reverse}
    \\
    \LFMbi{\fonctionprimal}\np{\primal} 
    &= 
      \sup_{\dual \in \DUAL} \bp{ \nscal{\primal}{\dual} 
      -\LFM{\fonctionprimal}\np{\dual} }
      \eqsepv \forall \primal \in \PRIMAL
      \eqfinp
      \label{eq:Fenchel_biconjugate}
      \intertext{We also recall the classic \emph{(Rockafellar-Moreau) subdifferential}}
      \Lowsubdifferential{\Fenchelcoupling}{\fonctionprimal}\np{\primal}
    &=
      \defset{ \dual \in \DUAL }{ %
      \LFM{\fonctionprimal}\np{\dual} 
      = \nscal{\primal}{\dual} -\fonctionprimal\np{\primal} }
       \eqsepv
      \forall\primal\in\dom\fonctionprimal 
  \eqfinp
  \label{eq:Rockafellar-Moreau-subdifferential_a}
  \end{align}
\end{subequations}
A function \( \fonctionprimal : \PRIMAL \to \barRR \),
or \( \fonctiondual \colon \DUAL \to \barRR \), 
is said to be \emph{closed}\footnote{%
  We follow the terminology in  \cite[p.~15]{Rockafellar:1974}, although it can be misleading.
  Indeed, anticipating on the notion of valley
  function~\cite{Penot:2000},
  a function taking the value $-\infty$ on a closed subset
  (neither the empty set nor the whole set) and $+\infty$ 
  outside is lsc but not closed.
  Some authors \cite{Borwein-Lewis:2006} use closed in the sense of lsc.
  \label{ft:closed_function}}
if it is either lsc and nowhere having the value $-\infty$,
or is the constant function~$-\infty$.
Closed convex functions are the two constant functions~$-\infty$ and~$+\infty$
united with all proper convex lsc functions.\footnote{%
  In particular, any closed convex function that takes at least one finite value
  is necessarily proper convex~lsc.
  Notice that a function taking the value $-\infty$ on a closed convex subset
  (neither the empty set nor the whole set) and $+\infty$
  outside is convex~lsc, but is not closed convex (see Footnote~\ref{ft:closed_function}).   
  \label{ft:closed_convex_function}}
It is proved that the Fenchel conjugacy --- 
indifferently 
\( \fonctionprimal \mapsto \LFM{\fonctionprimal} \)
or
\( \fonctiondual \mapsto \LFMr{\fonctiondual} \) --- 
induces a one-to-one correspondence
between the closed convex functions and themselves
\cite[Theorem~5]{Rockafellar:1974}.

\subsubsection{Support function, polar of a set, \BipolarSet}
\label{Polar_of_a_set,_bipolar_set}

For any subset \( \Primal\subset\PRIMAL \), we denote by $\convexhull \Primal$ (or
$\convexhull\np{\Primal}$) the \emph{convex hull} of~$\Primal$ --- that is, the
smallest convex set in~$\PRIMAL$ containing~$\Primal$ --- by
$\closedconvexhull \Primal$ (or $\closedconvexhull\np{\Primal}$) the
\emph{closed convex hull} of~$\Primal$ --- that is, the smallest closed convex set
in~$\PRIMAL$ containing~$\Primal$.
A subset $\Cone\subset\PRIMAL$ is said to be a \emph{cone}\footnote{%
  Hence, a cone does not necessarily contain the origin~$0$.}  if
$\RR_{++}\Cone \subset \Cone$.

For any subset\footnote{%
  We use the letter~$\Primal$ for a primal subset, 
  and the letter~$\primal$ for a primal vector.
  We use the letter~$\Dual$ for a dual  subset, 
  and the letter~$\dual$ for a dual vector.
}
\( \Dual\subset\DUAL \),
\( \SupportFunction{\Dual} \colon \PRIMAL \to \barRR\) denotes the 
\emph{support function of the set~$\Dual$} --- 
defined by \( \SupportFunction{\Dual}\np{\primal} = 
\sup_{\dual\in\Dual} \nscal{\primal}{\dual} \), for any \( \primal \in \PRIMAL
\) \cite[\S~7.10, p.~288]{Aliprantis-Border:2006}.
In the same way, we define \( \SupportFunction{\Primal} \colon \DUAL \to \barRR\)
for any subset~$\Primal\subset\PRIMAL$.

The (negative) \emph{polar cone}~$\PolarCone{\Primal}$ of the subset \( \Primal\subset\PRIMAL \) 
is the closed convex cone
\cite[p.~122, Equation~(6.28)]{Bauschke-Combettes:2017}
\begin{subequations}
  \begin{equation}
    \PolarCone{\Primal}=
    \defset{ \dual \in \DUAL }{  \nscal{\primal}{\dual} \leq 0
      \eqsepv \forall \primal \in \Primal }
    =  \MidLevelSet{\SupportFunction{\Primal}}{0}
    \eqfinv
    \label{eq:(negative)polar_cone}
  \end{equation}
  and the same definition holds for \( \Dual\subset\DUAL \),
  so that we define
  \begin{equation}
    \biPolarCone{\Primal}= \PolarCone{\np{\PolarCone{\Primal}}}
    \eqfinp
    \label{eq:(negative)bipolar_cone}
  \end{equation}
\end{subequations}
The (negative) or (one-sided) \emph{polar set}~$\PolarSet{\Primal}$ 
of the subset~$\Primal\subset\PRIMAL$ is the closed convex set
\cite[Definition~5.101, p.~216]{Aliprantis-Border:2006}
\begin{subequations}
  \begin{equation}
    \PolarSet{\Primal}=
    \defset{ \dual \in \DUAL }{  \nscal{\primal}{\dual} \leq 1
      \eqsepv \forall \primal \in \Primal }
    =  \MidLevelSet{\SupportFunction{\Primal}}{1}
    \eqfinv
    \label{eq:(negative)polar_set}
  \end{equation}
  and the same definition holds for \( \Dual\subset\DUAL \).
    For any subset $\Dual \subset \DUAL$, the
  effective domain of the support function~\(\SupportFunction{\Dual} \) is
  $\RR_{++}\PolarSet{\Dual}$.

We define the (negative) or (one-sided) \emph{bipolar set}~$\PolarSet{\Primal}$ 
of the subset~$\Primal\subset\PRIMAL$ as the closed convex set
  \begin{equation}
    \biPolarSet{\Primal}= \PolarSet{\np{\PolarSet{\Primal}}}
    \eqfinv
    \label{eq:(negative)bipolar_set}
  \end{equation}
\end{subequations}
and the same definition holds for \( \Dual\subset\DUAL \).
By the bipolar Theorem~\cite[Theorem~5.103]{Aliprantis-Border:2006},
we have that 
\begin{equation}
  \biPolarSet{\Primal}=
  \closedconvexhull\np{\Primal \cup \na{0} }
  \eqfinp
  \label{eq:biPolarSet}
\end{equation}

We have not found the following definition of \BipolarSet\ in the literature, as
most authors simply say that a set is closed convex and contains~$0$.  However,
putting a name on this well-known notion will be quite practical for our
purposes, especially for the connection with bipolar functions.  The three
equivalences below are a straightforward consequence of the bipolar Theorem.
The notion of polar pair can be found in \cite[Theorem~14.6,
p.~126]{Rockafellar:1970}.

\begin{definition}
  \label{de:bipolar_set_dual_pair}
  A subset \( \Primal\subset\PRIMAL \) is said to be a \emph{\BipolarSet} if
  any of the following three equivalent conditions is satisfied:
  \begin{enumerate}
  \item
    \label{it:bipolar_set_def_1}
    \( \Primal\)  is closed convex and contains~$0$, 
  \item
    \( \Primal= \biPolarSet{\Primal} \), that is, 
    \( \Primal\)  is equal to its bipolar,
  \item
    there exists a nonempty set~\( Z\subset\PRIMAL \) such that 
    \( \Primal= \biPolarSet{Z} \), that is, 
    \( \Primal\)  is equal to the bipolar of~$Z$.
  \end{enumerate}
  The same definition holds for a subset \( \Dual\subset\DUAL \).
  We denote by $\BIPOLARSET$ the set of \BipolarSet s of~$\PRIMAL$.
  \medskip

  Let \( \Primal\subset\PRIMAL \) and \( \Dual\subset\DUAL \) be two subsets.
  We say that \( \Primal \) and \( \Dual\) form a \emph{polar pair}
  if       \( \Dual=\PolarSet{\Primal} \) and \( \Primal=\PolarSet{\Dual} \).
  The two elements of a {polar pair} are necessarily bipolar sets.
\end{definition}
As an example, given a subset \( \Primal\subset\PRIMAL \), we easily check that
$\PolarSet{\Primal}$ is a \BipolarSet\ --- as
\( \triPolarSet{\Primal} = \PolarSet{\Primal} \) follows
from~\eqref{eq:biPolarSet} --- and that \( \biPolarSet{\Primal} \) and
\( \PolarSet{\Primal} \) form a polar pair.

We will need the following properties.
\begin{proposition}
  Let \( \Primal, \Primal' \subset\PRIMAL \) be two (primal) subsets
  (or be two (dual) subsets of~$\DUAL$).
  We have that
  \begin{subequations}
    \begin{align}
      &  \PolarSet{\np{\Primal\cup \Primal'}} =   \PolarSet{\Primal} \cap \PolarSet{\Primal'}
        \eqfinv
        \label{eq:polar_of_union}
      \\
      \Primal, \Primal' \mtext{ \BipolarSet s~} \implies
      &
        \PolarSet{\np{\Primal \cap \Primal'}} =
        \closedconvexhull{\np{\PolarSet{\Primal} \cup \PolarSet{\Primal'}}}
        \label{eq:polar_of_intersection}
        \eqfinp
    \end{align}
  \end{subequations}
\end{proposition}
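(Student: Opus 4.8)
The plan is to prove the two identities~\eqref{eq:polar_of_union} and~\eqref{eq:polar_of_intersection} in turn, using only the definition of the polar set as a sublevel set of a support function and the bipolar Theorem~\eqref{eq:biPolarSet}. For~\eqref{eq:polar_of_union}, I would argue directly from the defining inequality: a dual vector~$\dual$ lies in $\PolarSet{\np{\Primal\cup\Primal'}}$ if and only if $\nscal{\primal}{\dual}\le 1$ for every $\primal\in\Primal\cup\Primal'$, which splits into the conjunction ``$\nscal{\primal}{\dual}\le 1$ for all $\primal\in\Primal$'' and ``$\nscal{\primal}{\dual}\le 1$ for all $\primal\in\Primal'$'', that is, $\dual\in\PolarSet{\Primal}$ and $\dual\in\PolarSet{\Primal'}$. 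No convexity or closedness hypothesis is needed here, and the same one-line argument works for subsets of~$\DUAL$. (Equivalently, one can note that $\SupportFunction{\Primal\cup\Primal'}=\max\na{\SupportFunction{\Primal},\SupportFunction{\Primal'}}$ and intersect the sublevel sets at level~$1$.)

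For~\eqref{eq:polar_of_intersection}, the inclusion ``$\supseteq$'' is the easy half: from $\Primal\cap\Primal'\subset\Primal$ and $\Primal\cap\Primal'\subset\Primal'$ we get, by antitonicity of the polar operation, $\PolarSet{\Primal}\subset\PolarSet{\np{\Primal\cap\Primal'}}$ and $\PolarSet{\Primal'}\subset\PolarSet{\np{\Primal\cap\Primal'}}$, hence $\PolarSet{\Primal}\cup\PolarSet{\Primal'}\subset\PolarSet{\np{\Primal\cap\Primal'}}$; since the right-hand side is a closed convex set (it is a polar set), taking closed convex hulls preserves the inclusion, giving $\closedconvexhull\np{\PolarSet{\Primal}\cup\PolarSet{\Primal'}}\subset\PolarSet{\np{\Primal\cap\Primal'}}$. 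For the reverse inclusion ``$\subseteq$'', the clean route is to take the polar of both sides of the (already established) identity~\eqref{eq:polar_of_union} applied to the \emph{dual} subsets $\PolarSet{\Primal},\PolarSet{\Primal'}\subset\DUAL$: we get $\PolarSet{\np{\PolarSet{\Primal}\cup\PolarSet{\Primal'}}}=\biPolarSet{\Primal}\cap\biPolarSet{\Primal'}$, and since $\Primal,\Primal'$ are bipolar sets this equals $\Primal\cap\Primal'$. Taking polars once more and invoking the bipolar Theorem~\eqref{eq:biPolarSet} in the form $\triPolarSet{Z}=\PolarSet{Z}$ applied to $Z=\PolarSet{\Primal}\cup\PolarSet{\Primal'}$, we obtain
\begin{equation*}
  \PolarSet{\np{\Primal\cap\Primal'}}
  = \PolarSet{\np{\PolarSet{\np{\PolarSet{\Primal}\cup\PolarSet{\Primal'}}}}}
  = \biPolarSet{\np{\PolarSet{\Primal}\cup\PolarSet{\Primal'}}}
  = \closedconvexhull\bp{\np{\PolarSet{\Primal}\cup\PolarSet{\Primal'}}\cup\na{0}}
  = \closedconvexhull\np{\PolarSet{\Primal}\cup\PolarSet{\Primal'}}
  \eqfinv
\end{equation*}
where the last equality holds because each of $\PolarSet{\Primal}$ and $\PolarSet{\Primal'}$ already contains~$0$ (polar sets always do), so adjoining~$\na{0}$ changes nothing.

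The only place where the hypothesis ``$\Primal,\Primal'$ \BipolarSet s'' is genuinely used is in the step $\biPolarSet{\Primal}\cap\biPolarSet{\Primal'}=\Primal\cap\Primal'$, which is condition~\ref{it:bipolar_set_def_1}$\Leftrightarrow$(2) of Definition~\ref{de:bipolar_set_dual_pair}; without it one would only get $\PolarSet{\np{\Primal\cap\Primal'}}\supseteq\closedconvexhull\np{\PolarSet{\Primal}\cup\PolarSet{\Primal'}}$ together with $\PolarSet{\np{\biPolarSet{\Primal}\cap\biPolarSet{\Primal'}}}=\closedconvexhull\np{\PolarSet{\Primal}\cup\PolarSet{\Primal'}}$. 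I do not anticipate a serious obstacle: the whole argument is a bookkeeping exercise with the antitone Galois-type correspondence $\Primal\mapsto\PolarSet{\Primal}$ and the idempotence of the bipolar operation. The one subtlety worth stating carefully is the harmless insertion/deletion of~$\na{0}$ when passing between~\eqref{eq:biPolarSet} and the closed convex hull, which is why I would spell out that polar sets contain the origin.
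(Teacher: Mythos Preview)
Your proof is correct and follows essentially the same route as the paper's: the direct unfolding of the definition for~\eqref{eq:polar_of_union}, and for~\eqref{eq:polar_of_intersection} the key step of applying~\eqref{eq:polar_of_union} to the dual subsets $\PolarSet{\Primal},\PolarSet{\Primal'}$, invoking the bipolar hypothesis to identify $\biPolarSet{\Primal}\cap\biPolarSet{\Primal'}=\Primal\cap\Primal'$, and then applying the bipolar Theorem with the observation that $0\in\PolarSet{\Primal}\cup\PolarSet{\Primal'}$. The only cosmetic difference is that you first establish the inclusion ``$\supseteq$'' separately via antitonicity, which is harmless but redundant, since the subsequent chain of equalities already delivers both inclusions at once.
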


\begin{proof}
  Equation~\eqref{eq:polar_of_union} easily follows from the
  definition~\eqref{eq:(negative)polar_set} of a (negative) or (one-sided) polar set
  (see also \cite[Item~3, Lemma~5.102]{Aliprantis-Border:2006}). 

  Suppose that \( \Primal, \Primal' \) are both \BipolarSet s. Then, 
  Equation~\eqref{eq:polar_of_intersection} follows from
  \begin{align*}
    \PolarSet{\np{\Primal \cap \Primal'}}
    &=
      \PolarSet{\np{\biPolarSet{\Primal} \cap \biPolarSet{\Primal'}}}
      \tag{as both $\Primal$ and $\Primal'$ are \BipolarSet s}
    \\
    &=
      \biPolarSet{\np{\PolarSet{\Primal} \cup \PolarSet{\Primal'}}}
      \tag{by~\eqref{eq:polar_of_union}}
    \\
    &=
      \closedconvexhull{\np{\PolarSet{\Primal} \cup \PolarSet{\Primal'}}}
      \tag{by the bipolar Theorem in~\eqref{eq:biPolarSet}, as $\PolarSet{\Primal} \cup \PolarSet{\Primal'}$ contains $0$}
      \eqfinp
  \end{align*}
  This ends the proof.
\end{proof}

\subsubsection{The lattice of \BipolarSet s}
\label{The_lattice_of_BipolarSet_s}

The following Proposition~\ref{pr:BipolarSet_lattice} is easy to show.
To our knowledge, it is new (see \cite[p.~291-292]{Aliprantis-Border:2006} that points out that the set of
closed convex subsets is a lattice).
\begin{proposition}
  \label{pr:BipolarSet_lattice}  
  The set of \BipolarSet s of~$\PRIMAL$, when ordered by inclusion~$\subset$, is a lattice
  \( \np{\BIPOLARSET,\wedge,\vee}\)
  with bottom~$\na{0}$ and with top~$\PRIMAL$.
  The greatest lower bound~$\vee$ and the least upper bound~$\wedge$
  operations are given, for any family \( \sequence{\Primal_j}{j\in J} \) of
  \BipolarSet s of~$\PRIMAL$, by
  \begin{subequations}
    \begin{align}
      \Bwedge_{j\in J} \Primal_j
      &=
        \bigcap_{j\in J} \Primal_j
        \eqfinv
        \label{eq:lattice_set_wedge}
      \\
      \Bvee_{j\in J} \Primal_j
      &=
        \closedconvexhull \bp{ \bigcup_{j\in J} \Primal_j }
        \label{eq:lattice_set_vee}
        \eqfinp
    \end{align}
  \end{subequations}
\end{proposition}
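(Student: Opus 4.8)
The plan is to verify directly that $\np{\BIPOLARSET,\wedge,\vee}$ with the stated operations is a lattice, by checking that $\BIPOLARSET$ is closed under the two proposed operations and that these operations indeed produce the greatest lower bound and least upper bound for the inclusion order. First I would dispense with the easy structural claims: $\na{0}$ is closed convex and contains~$0$, hence a bipolar set, and it is contained in every bipolar set (by Item~\ref{it:bipolar_set_def_1} of Definition~\ref{de:bipolar_set_dual_pair}, every bipolar set contains~$0$), so it is the bottom; similarly $\PRIMAL$ itself is closed convex and contains~$0$, and contains every subset, so it is the top.

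Next I would handle the meet~\eqref{eq:lattice_set_wedge}. For any family $\sequence{\Primal_j}{j\in J}$ of bipolar sets, the intersection $\bigcap_{j\in J}\Primal_j$ is an intersection of closed convex sets each containing~$0$, hence is itself closed convex and contains~$0$, so it is a bipolar set by the equivalence in Definition~\ref{de:bipolar_set_dual_pair}. It is contained in each $\Primal_j$, so it is a lower bound; and any bipolar set contained in all the $\Primal_j$ is contained in their intersection, so it is the \emph{greatest} lower bound. For the join~\eqref{eq:lattice_set_vee}, the set $\closedconvexhull\bp{\bigcup_{j\in J}\Primal_j}$ is by construction closed convex, and it contains~$0$ since each (nonempty) $\Primal_j$ already contains~$0$; hence it is a bipolar set. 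It clearly contains each $\Primal_j$, so it is an upper bound. If $\Primal$ is any bipolar set with $\Primal_j\subset\Primal$ for all $j$, then $\bigcup_{j\in J}\Primal_j\subset\Primal$, and since $\Primal$ is closed convex, $\closedconvexhull\bp{\bigcup_{j\in J}\Primal_j}\subset\Primal$; thus it is the \emph{least} upper bound.

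Alternatively, and perhaps more in the spirit of the surrounding text, one can phrase the join via the bipolar operation: by~\eqref{eq:biPolarSet}, $\closedconvexhull\bp{\bigcup_{j\in J}\Primal_j} = \biPolarSet{\np{\bigcup_{j\in J}\Primal_j}}$ since the union already contains~$0$ (each $\Primal_j$ being a nonempty bipolar set), and then Item~3 of Definition~\ref{de:bipolar_set_dual_pair} immediately gives that this is a bipolar set. Combined with~\eqref{eq:polar_of_union} this also makes the duality $\PolarSet{\np{\Bvee_j \Primal_j}} = \Bwedge_j \PolarSet{\Primal_j}$ transparent, which is a natural sanity check.

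I do not anticipate a genuine obstacle here; the one point requiring a moment's care is the hypothesis that the family is such that the operations are well defined — in particular that each $\Primal_j$ is nonempty (guaranteed, since a bipolar set contains~$0$), so that $0\in\closedconvexhull\bp{\bigcup_{j\in J}\Primal_j}$ and the join is genuinely a bipolar set rather than, say, the closed convex hull of the empty set. One should also note the degenerate case $J=\emptyset$: then $\bigcap_{j\in\emptyset}\Primal_j=\PRIMAL$ and $\closedconvexhull\bp{\bigcup_{j\in\emptyset}\Primal_j}=\na{0}$, consistent with $\PRIMAL$ being the top and $\na{0}$ the bottom. The verification is otherwise a routine unwinding of definitions together with the bipolar Theorem already recalled in~\eqref{eq:biPolarSet}.
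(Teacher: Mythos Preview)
Your proposal is correct and follows essentially the same approach as the paper: verifying directly that the intersection and the closed convex hull of the union are bipolar sets (closed, convex, containing~$0$) and that they realize the greatest lower bound and least upper bound for inclusion. Your additional remarks on the bipolar reformulation of the join and on the degenerate case $J=\emptyset$ are sound extras that the paper does not include.
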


\begin{proof}
  First, we prove~\eqref{eq:lattice_set_wedge}.  Let~$\Primal$ be a \BipolarSet\
  such that $\Primal \subset \Primal_j$ for all ${j\in J}$ (the set~$\na{0}$ is
  always a possibility).  Then it is immediate that
  $\Primal \subset \bigcap_{j\in J} \Primal_j$ and~\eqref{eq:lattice_set_wedge} follows as
  $\bigcap_{j\in J} \Primal_j$ is a \BipolarSet, being closed convex and
  containing~$0$ as the intersection of closed convex sets containing~$0$
  (Item~\ref{it:bipolar_set_def_1} of Definition~\ref{de:bipolar_set_dual_pair}).
  
  Second, we prove~\eqref{eq:lattice_set_vee}.  Let~$\Primal$ be a \BipolarSet\
  such that $\Primal_j \subset \Primal$ for all ${j\in J}$ (the set~$\PRIMAL$ is
  always a possibility).  Then we have
  $\bigcup_{j\in J} \Primal_j \subset \Primal$ and, as $\Primal$ is closed convex (by
  Item~\ref{it:bipolar_set_def_1} of Definition~\ref{de:bipolar_set_dual_pair}),
  we obtain that
  $\bigcup_{j\in J} \Primal_j \subset \closedconvexhull \bp{ \bigcup_{j\in J} \Primal_j } \subset \Primal$
  by the very definition of the closed convex hull.  Now,
  $\closedconvexhull \bp{ \bigcup_{j\in J} \Primal_j }$ is closed convex and
  contains~$0$ as all $\Primal_j$ contain~$0$.  Using
  Item~\ref{it:bipolar_set_def_1} of Definition~\ref{de:bipolar_set_dual_pair}, we
  immediately obtain that $\closedconvexhull \bp{ \bigcup_{j\in J} \Primal_j }$ is a
  \BipolarSet\ and~\eqref{eq:lattice_set_vee} follows.

  Obviously, $\na{0}$ is the bottom and $\PRIMAL$ is the top of the lattice. 
\end{proof}

\subsection{Background on the Minkowski functional}
\label{Background_on_Minkowski_functional}

We define $1$-homogeneous functions, present the Minkowski functional
and some of their properties. 


%
%

\begin{definition}
  \label{de:1-homogeneous_function}
  Let $\PRIMAL$ be a (real) vector space, and $\Cone\subset\PRIMAL$ be a nonempty cone.
  We say that a
  {function}~$\fonctionprimal \colon \Cone \to {\barRR}$ is
  \emph{(strictly positively)\footnote{%
      The definition of homogeneous function is not stabilized in the literature.
      For instance, in \cite[\S~5.8, p.~190]{Aliprantis-Border:2006}, a real
      function deﬁned on a cone is positively homogeneous if
      Equation~\eqref{eq:1-homogeneous_function} holds for all
      $\lambda \in \RR_{+}$ (thus including $\lambda=0$);
      in \cite[p.~30]{Rockafellar:1970}, a function
      on~$\RR^\spacedim$ is positively homogeneous if
      Equation~\eqref{eq:1-homogeneous_function} holds for all
      $\lambda \in \RR_{++}$ (thus excluding $\lambda=0$).
      This is why, we prefer to avoid all ambiguity and speak of
      (strictly positively) when \( \lambda \in {\RR_{++}} \).  We may sometimes omit the
      (strictly positively) and speak of a $1$-homogeneous function.}
    $1$-homogeneous}, or homogeneous of degree~$1$, (on the cone~$\Cone$) if
  \begin{equation}
    {\fonctionprimal\np{\lambda\primal} = \lambda \fonctionprimal\np{\primal}}
    \eqsepv \forall \lambda \in {\RR_{++}}
    \eqsepv \forall \primal \in \Cone
    \eqfinp
    \label{eq:1-homogeneous_function}
  \end{equation}
\end{definition}

Following \cite[Section~15, p.~130]{Rockafellar:1970}, and as recalled in
Sect.~\ref{Introduction}, a
function~$\fonctionprimal \colon \PRIMAL \to \barRR_{+}$ is said to be a
\emph{gauge} if it is nonnegative (strictly positively) $1$-homogeneous convex
and vanishing at zero (\( \fonctionprimal\np{0}=0 \)).
By \cite[Theorem~7.51, p.~288]{Aliprantis-Border:2006} and
\cite[Definition~5.45, p.~190]{Aliprantis-Border:2006}, the support function of
a nonempty set is convex, lsc, (strictly positively) $1$-homogeneous and
vanishes at the origin, hence is a lsc gauge.

For a nonnegative function, we will need the following result (although it is
well-known that a {function} is (strictly positively) $1$-homogeneous if and
only if its epigraph is a cone if and only if its strict epigraph is a cone, we
give a proof and an explicit expression of the cone in
Item~\ref{it:strict_epigraph_of_a_1-homogeneous_function}).

\begin{proposition}
  Let $\Cone\subset\PRIMAL$ be a nonempty cone and
  \( \fonctionprimal \colon \Cone \to {\barRR}_{+} \) be a function.  The
  following statements are equivalent:
  \begin{enumerate}
  \item
    \label{it:1-homogeneous_function}
    the function \( \fonctionprimal \colon \Cone \to {\barRR}_{+} \)
    is (strictly positively) $1$-homogeneous,
  \item
    \label{it:strict_epigraph_of_a_1-homogeneous_function}
    the  strict epigraph of~$\fonctionprimal$ has the expression
    \( \epigraph_{s}\fonctionprimal= \RR_{++}
    \bp{ \np{\Cone \cap \MidStrictLevelSet{\fonctionprimal}{1}}\times\na{1}
    } \),
  \item
    \label{it:strict_epigraph_of_a_1-homogeneous_function_is_a_cone}
    the  strict epigraph of~$\fonctionprimal$ is a cone included
    in $\Cone\times \RR_{++}$.
  \end{enumerate}
  \label{pr:1-homogeneous_function_strict_epigraph}
\end{proposition}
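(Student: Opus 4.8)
The plan is to establish the cycle of implications Item~\ref{it:1-homogeneous_function} $\Rightarrow$ Item~\ref{it:strict_epigraph_of_a_1-homogeneous_function} $\Rightarrow$ Item~\ref{it:strict_epigraph_of_a_1-homogeneous_function_is_a_cone} $\Rightarrow$ Item~\ref{it:1-homogeneous_function}. Before starting, I would record one observation that needs no homogeneity: since $\fonctionprimal$ is nonnegative on the cone~$\Cone$, any $\np{\primal,t} \in \epigraph_{s}\fonctionprimal$ satisfies $\primal \in \Cone$ and $t > \fonctionprimal\np{\primal} \geq 0$, so that $\epigraph_{s}\fonctionprimal \subset \Cone\times\RR_{++}$ automatically. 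Consequently, in Item~\ref{it:strict_epigraph_of_a_1-homogeneous_function_is_a_cone} only the ``cone'' part carries content.

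For Item~\ref{it:1-homogeneous_function} $\Rightarrow$ Item~\ref{it:strict_epigraph_of_a_1-homogeneous_function}, given $\np{\primal,t} \in \epigraph_{s}\fonctionprimal$ I would divide by $t>0$ --- legitimate since $\Cone$ is a cone --- and use $1$-homogeneity to get $\fonctionprimal\np{\primal/t} = \fonctionprimal\np{\primal}/t < 1$, which exhibits $\np{\primal,t} = t\np{\primal/t,\,1}$ as an element of $\RR_{++}\bp{\np{\Cone\cap\MidStrictLevelSet{\fonctionprimal}{1}}\times\na{1}}$; the reverse inclusion is the same computation read backwards, scaling a pair $\np{\primal',1}$ with $\fonctionprimal\np{\primal'}<1$ by $\lambda \in \RR_{++}$ and using $\fonctionprimal\np{\lambda\primal'} = \lambda\fonctionprimal\np{\primal'} < \lambda$. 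The implication Item~\ref{it:strict_epigraph_of_a_1-homogeneous_function} $\Rightarrow$ Item~\ref{it:strict_epigraph_of_a_1-homogeneous_function_is_a_cone} is then immediate, since any set of the form $\RR_{++}S$ satisfies $\RR_{++}\bp{\RR_{++}S} = \RR_{++}S$ and is therefore a cone, while the inclusion in $\Cone\times\RR_{++}$ was already noted.

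The implication Item~\ref{it:strict_epigraph_of_a_1-homogeneous_function_is_a_cone} $\Rightarrow$ Item~\ref{it:1-homogeneous_function} is where I expect the only genuine care to be needed, precisely because of the value~$+\infty$. Fixing $\lambda \in \RR_{++}$ and $\primal \in \Cone$, I would first treat the case $\fonctionprimal\np{\primal} = +\infty$: were $\fonctionprimal\np{\lambda\primal}$ finite, some $\np{\lambda\primal,t}$ would lie in $\epigraph_{s}\fonctionprimal$, and scaling by $1/\lambda$ (using that $\epigraph_{s}\fonctionprimal$ is a cone) would place $\np{\primal,t/\lambda}$ there as well, contradicting $\fonctionprimal\np{\primal} = +\infty$; hence $\fonctionprimal\np{\lambda\primal} = +\infty = \lambda\fonctionprimal\np{\primal}$. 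When $\fonctionprimal\np{\primal} < +\infty$, for each $t > \fonctionprimal\np{\primal}$ the point $\np{\primal,t} \in \epigraph_{s}\fonctionprimal$ scales to $\np{\lambda\primal,\lambda t} \in \epigraph_{s}\fonctionprimal$, so $\fonctionprimal\np{\lambda\primal} < \lambda t$ for every such $t$, and taking the infimum over $t > \fonctionprimal\np{\primal}$ gives $\fonctionprimal\np{\lambda\primal} \leq \lambda\fonctionprimal\np{\primal}$. This sub-homogeneity inequality holds for every $\lambda \in \RR_{++}$ and every $\primal$ with $\fonctionprimal\np{\primal} < +\infty$; since it also forces $\fonctionprimal\np{\lambda\primal} < +\infty$, I may apply it once more with $1/\lambda$ in place of $\lambda$ and $\lambda\primal$ in place of $\primal$ to obtain $\fonctionprimal\np{\primal} \leq \fonctionprimal\np{\lambda\primal}/\lambda$, that is, $\lambda\fonctionprimal\np{\primal} \leq \fonctionprimal\np{\lambda\primal}$, and homogeneity follows. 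The only obstacle is this bookkeeping around $+\infty$ --- in particular checking that the second application of the inequality is licit, which the finiteness remark secures.
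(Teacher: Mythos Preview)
Your proof is correct and follows the same cycle of implications as the paper. The arguments for Item~\ref{it:1-homogeneous_function} $\Rightarrow$ Item~\ref{it:strict_epigraph_of_a_1-homogeneous_function} and Item~\ref{it:strict_epigraph_of_a_1-homogeneous_function} $\Rightarrow$ Item~\ref{it:strict_epigraph_of_a_1-homogeneous_function_is_a_cone} match the paper's.

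For Item~\ref{it:strict_epigraph_of_a_1-homogeneous_function_is_a_cone} $\Rightarrow$ Item~\ref{it:1-homogeneous_function}, your case analysis (finite versus infinite) and sub-homogeneity argument are perfectly valid, but the paper takes a shorter route that avoids case splitting altogether: it uses the identity $\fonctionprimal\np{\primal} = \inf\bset{t}{\np{\primal,t}\in\epigraph_{s}\fonctionprimal}$, which holds uniformly (the infimum over the empty set being $+\infty$). Then, for $\alpha\in\RR_{++}$, the cone property $\frac{1}{\alpha}\epigraph_{s}\fonctionprimal = \epigraph_{s}\fonctionprimal$ yields directly
\[
\fonctionprimal\np{\alpha\primal}
= \inf_{\np{\alpha\primal,t}\in\epigraph_{s}\fonctionprimal} t
= \inf_{\np{\primal,t/\alpha}\in\epigraph_{s}\fonctionprimal} t
= \alpha \inf_{\np{\primal,t'}\in\epigraph_{s}\fonctionprimal} t'
= \alpha\fonctionprimal\np{\primal},
\]
with no need to isolate the value $+\infty$. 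Your approach buys explicitness; the paper's buys brevity and uniformity.
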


\begin{proof} We have
  \begin{enumerate}
  \item
    We prove that Item~\ref{it:1-homogeneous_function}
    implies Item~\ref{it:strict_epigraph_of_a_1-homogeneous_function}.
    We have that
    \begin{align*}
      (\primal,\alpha) \in \epigraph_{s}\fonctionprimal
      &\iff
        \primal\in\Cone \text{ and }
        \fonctionprimal\np{\primal} < \alpha \text{ and } \alpha >0
        \intertext{by definition of the strict epigraph of~$\fonctionprimal$,
        and using the assumption that
        \( \fonctionprimal \geq 0 \), hence that $\alpha >0$,}
      &\iff
        \primal\in\Cone \text{ and }
        \frac{1}{\alpha}\fonctionprimal\np{\primal} =\fonctionprimal\np{\frac{\primal}{\alpha}}
        < 1 \text{ and } \alpha > 0
        \tag{by the assumption that the function \( \fonctionprimal \)
        is (strictly positively) $1$-homogeneous in Item~\ref{it:1-homogeneous_function}}
        \eqfinv
      \\
      &\iff
        \frac{\primal}{\alpha} \in \Cone\cap\MidStrictLevelSet{\fonctionprimal}{1} \text{ and } \alpha > 0
        \tag{as $\Cone$ is a cone}
        \eqfinv
      \\
      &\iff
        (\frac{\primal}{\alpha},1) \in \np{\Cone\cap\MidStrictLevelSet{\fonctionprimal}{1}} \times\na{1} \text{ and } \alpha > 0
        \eqfinv
      \\
      &\iff
        (\primal,\alpha) \in \RR_{++}\np{\Cone\cap\MidStrictLevelSet{\fonctionprimal}{1}} \times\na{1}
        \eqfinp
    \end{align*}
  \item
    It is straightforward that Item~\ref{it:strict_epigraph_of_a_1-homogeneous_function}
    implies Item~\ref{it:strict_epigraph_of_a_1-homogeneous_function_is_a_cone}.
  \item
    We prove that Item~\ref{it:strict_epigraph_of_a_1-homogeneous_function_is_a_cone}
    implies  Item~\ref{it:1-homogeneous_function}.
    For any $ \primal \in \Cone$ and $\alpha\in \RR_{++}$, we have that
    \begin{align*}
      \fonctionprimal\np{\alpha\primal}
      &=
        \inf_{\np{\alpha\primal,t}\in\epigraph_{s}\fonctionprimal}t
      \\
      &=
        \inf_{\np{\primal,t/\alpha}\in\frac{1}{\alpha}\epigraph_{s}\fonctionprimal}t
      \\
      &=
        \alpha \inf_{\np{\primal,t/\alpha}\in\epigraph_{s}\fonctionprimal}
        \frac{t}{\alpha}
        \tag{as \(
        \frac{1}{\alpha}\epigraph_{s}\fonctionprimal=\epigraph_{s}\fonctionprimal\)
        since this latter is a cone}
      \\
      &=
        \alpha \inf_{\np{\primal,t'}\in\epigraph_{s}\fonctionprimal} t' 
      \\
      &=
        \alpha \fonctionprimal\np{\primal}
        \eqfinp
    \end{align*}
  \end{enumerate}
  This ends the proof.
\end{proof}
  
We follow \cite[Definition~5.48, p.~191]{Aliprantis-Border:2006} to introduce Minkowski
functionals\footnote{%
  Also called gauges in \cite[Definition~5.48]{Aliprantis-Border:2006}.
  The definition of gauge is not stabilized in the literature.  For
  instance, in \cite[Section~15, p.~130]{Rockafellar:1970}, gauges are functions
  of the form
  \( k\np{\primal} = \inf\bset{\lambda \geq 0}{\primal\in\lambda\Primal} \) for some nonempty convex
  set~$\Primal$.
In \cite[p.~4]{Zalinescu:2002}, Minkowski gauges are defined like in~\eqref{eq:MinkowskiFunctional}
but with \( \lambda \geq 0 \) and for an absorbing set~\( \Primal \).}
\begin{definition}
  \label{de:MinkowskiFunctional}
  Let \( \Primal \subset \PRIMAL \). The \emph{Minkowski functional}
  associated with the subset~\( \Primal \subset \PRIMAL \)
  is the function \( \MinkowskiFunctional{\Primal} \colon \PRIMAL \to \barRR_{+} \)
  defined by (with the convention that $\inf\emptyset=+\infty$) 
  \begin{equation}
    \MinkowskiFunctional{\Primal}\np{\primal}
    = \inf\bset{\lambda>    0}{\primal\in\lambda\Primal}
    \eqsepv \forall \primal \in\PRIMAL
    \eqfinp
    \label{eq:MinkowskiFunctional}
  \end{equation}
\end{definition}

We will need the following properties.




\begin{proposition}
  \label{pr:MinkowskiFunctional}
  Let \( \Primal \subset \PRIMAL \).
  \begin{enumerate}
  \item
    \label{it:Minkowski_is_a_Functionalpositively_1-homogeneous_function}  
    The Minkowski functional \( \MinkowskiFunctional{\Primal} \colon \PRIMAL \to \barRR_{+} \) 
    is a nonnegative (strictly positively) $1$-homo\-ge\-ne\-ous function. 
    
  \item
    \label{it:Minkowski_strict_epigraph}
    The strict epigraph~\( \epigraph_{s}\MinkowskiFunctional{\Primal} \)
    of the Minkowski functional~\( \MinkowskiFunctional{\Primal} \) is the cone
    \begin{equation}
      \epigraph_{s}\MinkowskiFunctional{\Primal}
      = 
      \RR_{++}\np{ \MidStrictLevelSet{\MinkowskiFunctional{\Primal}}{1}
        \times \na{1} }
      \eqfinp 
      \label{eq:Minkowski_strict_epigraph}
    \end{equation}
    
  \item
    \label{it:strict_epigraph_is_a_cone_implies_Minkowski}
    For any 
    function \( \fonctionprimal \colon \PRIMAL \to\barRR_{+}\), 
    we have the implication
    \begin{equation}
           \epigraph_{s}\fonctionprimal = \RR_{++}\np{ \Primal \times \na{1} }
        \implies 
        \fonctionprimal= \MinkowskiFunctional{\Primal}
        \eqfinp
      \label{eq:strict_epigraph_is_a_cone_implies_Minkowski}
    \end{equation}
    
  \item
    \label{it:positively_1-homogeneous_function_is_a_MinkowskiFunctional}  
    Conversely, any nonnegative (strictly positively) $1$-homogeneous function
    \( \fonctionprimal \colon \PRIMAL \to\barRR_{+}\) is the Minkowski
    functional~\( \MinkowskiFunctional{\Primal} \) of a subset
    \( \Primal \subset \PRIMAL \), which can be chosen as the 
    subset \( \Primal = \MidStrictLevelSet{\fonctionprimal}{1} \),
    %
    that is,
    \begin{subequations}
      \begin{equation}
        \fonctionprimal = \MinkowskiFunctional{\MidStrictLevelSet{\fonctionprimal}{1}}
        \eqfinp
        \label{eq:positively_1-homogeneous_function_is_a_MinkowskiFunctional_strict}  
      \end{equation}
      When \( \fonctionprimal\np{0}=0 \), we have that
      \begin{equation}
          \fonctionprimal = \MinkowskiFunctional{\MidStrictLevelSet{\fonctionprimal}{1}}
        = \MinkowskiFunctional{\MidLevelSet{\fonctionprimal}{1}} 
        \eqfinp
        \label{eq:positively_1-homogeneous_function_is_a_MinkowskiFunctional_both}  
      \end{equation}
    \end{subequations}
  \item
    \label{it:The_Minkowski_functional_satisfies}
    The Minkowski functional satisfies 
    \begin{subequations}
      \begin{align}
        \MinkowskiFunctional{\bp{\bigcup_{i\in I}\Primal_i}}
        &=
          \inf_{i\in I}\MinkowskiFunctional{\Primal_i}
          \eqfinv 
          \label{eq:MinkowskiFunctional_union}
          \intertext{ for any family $\sequence{\Primal_i}{i\in I}$
          of subsets of~$\PRIMAL$,}
          \domain\np{\MinkowskiFunctional{\Primal}}
          &= 
            \RR_{++} \Primal
            \eqfinv
            \label{eq:MinkowskiFunctional_effective_domain}
        \\
        \Primal  \text{ is a convex set}
          & \implies
            \MinkowskiFunctional{\Primal} \text{ is a convex function} 
            \eqfinp
            \label{eq:MinkowskiFunctional_is_convex}
            %
      \end{align}
    \end{subequations}
  \end{enumerate}
\end{proposition}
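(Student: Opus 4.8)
The plan is to obtain all five items from the defining formula~\eqref{eq:MinkowskiFunctional} together with the epigraph/cone dictionary for $1$-homogeneous functions already established in Proposition~\ref{pr:1-homogeneous_function_strict_epigraph}, handling the values $0$ and $+\infty$ throughout by means of the convention $\inf\emptyset=+\infty$ fixed in Definition~\ref{de:MinkowskiFunctional}. For Item~\ref{it:Minkowski_is_a_Functionalpositively_1-homogeneous_function}: nonnegativity of $\MinkowskiFunctional{\Primal}$ is immediate since the infimum in~\eqref{eq:MinkowskiFunctional} ranges over $\lambda>0$ (this also covers $\Primal=\emptyset$, for which $\MinkowskiFunctional{\Primal}$ is identically $+\infty$); for $1$-homogeneity I would fix $\mu\in\RR_{++}$, use $\mu\primal\in\lambda\Primal\iff\primal\in\np{\lambda/\mu}\Primal$, and perform the change of variable $\lambda'=\lambda/\mu$ inside the infimum, which factors out $\mu$. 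Item~\ref{it:Minkowski_strict_epigraph} is then just the instance of Proposition~\ref{pr:1-homogeneous_function_strict_epigraph} (equivalence of its Items~\ref{it:1-homogeneous_function} and~\ref{it:strict_epigraph_of_a_1-homogeneous_function}) applied to the nonnegative $1$-homogeneous function $\MinkowskiFunctional{\Primal}$ on the cone $\Cone=\PRIMAL$.

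For Item~\ref{it:strict_epigraph_is_a_cone_implies_Minkowski}, the key remark is that every $\fonctionprimal\colon\PRIMAL\to\barRR_{+}$ satisfies $\fonctionprimal\np{\primal}=\inf\defset{t\in\RR}{\np{\primal,t}\in\epigraph_{s}\fonctionprimal}$ for all $\primal\in\PRIMAL$; substituting the hypothesis $\epigraph_{s}\fonctionprimal=\RR_{++}\np{\Primal\times\na{1}}$ and observing that $\np{\primal,t}\in\RR_{++}\np{\Primal\times\na{1}}$ holds exactly when $t>0$ and $\primal\in t\Primal$, the infimum becomes $\inf\defset{\lambda>0}{\primal\in\lambda\Primal}=\MinkowskiFunctional{\Primal}\np{\primal}$. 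Item~\ref{it:positively_1-homogeneous_function_is_a_MinkowskiFunctional} is then the synthesis of the two previous facts: for $\fonctionprimal$ nonnegative and $1$-homogeneous, Proposition~\ref{pr:1-homogeneous_function_strict_epigraph} yields $\epigraph_{s}\fonctionprimal=\RR_{++}\np{\MidStrictLevelSet{\fonctionprimal}{1}\times\na{1}}$, and Item~\ref{it:strict_epigraph_is_a_cone_implies_Minkowski} with $\Primal=\MidStrictLevelSet{\fonctionprimal}{1}$ gives~\eqref{eq:positively_1-homogeneous_function_is_a_MinkowskiFunctional_strict}. For~\eqref{eq:positively_1-homogeneous_function_is_a_MinkowskiFunctional_both}, the inclusion $\MidStrictLevelSet{\fonctionprimal}{1}\subset\MidLevelSet{\fonctionprimal}{1}$ makes $\defset{\lambda>0}{\primal\in\lambda\MidLevelSet{\fonctionprimal}{1}}$ contain $\defset{\lambda>0}{\primal\in\lambda\MidStrictLevelSet{\fonctionprimal}{1}}$, hence $\MinkowskiFunctional{\MidLevelSet{\fonctionprimal}{1}}\leq\MinkowskiFunctional{\MidStrictLevelSet{\fonctionprimal}{1}}=\fonctionprimal$; for the reverse inequality, $\primal\in\lambda\MidLevelSet{\fonctionprimal}{1}$ means $\fonctionprimal\np{\primal/\lambda}\leq1$, so $\fonctionprimal\np{\primal}=\lambda\fonctionprimal\np{\primal/\lambda}\leq\lambda$ by $1$-homogeneity, and therefore $\fonctionprimal\leq\MinkowskiFunctional{\MidLevelSet{\fonctionprimal}{1}}$ (the hypothesis $\fonctionprimal\np{0}=0$ enters here to guarantee $0\in\MidLevelSet{\fonctionprimal}{1}$, so that both sublevel sets pass through the origin).

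For Item~\ref{it:The_Minkowski_functional_satisfies}: identity~\eqref{eq:MinkowskiFunctional_union} follows from $\primal\in\lambda\bigcup_{i\in I}\Primal_i\iff\exists i\in I,\ \primal\in\lambda\Primal_i$, which gives $\defset{\lambda>0}{\primal\in\lambda\bigcup_{i\in I}\Primal_i}=\bigcup_{i\in I}\defset{\lambda>0}{\primal\in\lambda\Primal_i}$, and then the infimum over a union of sets equals the infimum of the infima; identity~\eqref{eq:MinkowskiFunctional_effective_domain} is the particular case $\MinkowskiFunctional{\Primal}\np{\primal}<+\infty\iff\exists\lambda>0,\ \primal\in\lambda\Primal\iff\primal\in\RR_{++}\Primal$; and for the implication~\eqref{eq:MinkowskiFunctional_is_convex} I would first check that $\MinkowskiFunctional{\Primal}$ is subadditive when $\Primal$ is convex, using $\lambda\Primal+\mu\Primal=\np{\lambda+\mu}\Primal$ for $\lambda,\mu\in\RR_{++}$ and $\Primal$ convex (so that $\primal\in\lambda\Primal$ and $\primal'\in\mu\Primal$ entail $\primal+\primal'\in\np{\lambda+\mu}\Primal$, whence $\MinkowskiFunctional{\Primal}\np{\primal+\primal'}\leq\lambda+\mu$), and then combine subadditivity with the $1$-homogeneity of Item~\ref{it:Minkowski_is_a_Functionalpositively_1-homogeneous_function} to conclude convexity.

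I do not expect a genuine conceptual obstacle here: the proposition is essentially the definition~\eqref{eq:MinkowskiFunctional} read through the homogeneity/cone correspondence of Proposition~\ref{pr:1-homogeneous_function_strict_epigraph}. The only real care is the extended-real bookkeeping --- keeping the convention $\inf\emptyset=+\infty$ uniform so that every formula survives empty sets and points outside $\RR_{++}\Primal$ --- and, in Item~\ref{it:positively_1-homogeneous_function_is_a_MinkowskiFunctional}, remembering that $1$-homogeneity only forces $\fonctionprimal\np{0}\in\na{0,+\infty}$, so that the passage between $\MidStrictLevelSet{\fonctionprimal}{1}$ and $\MidLevelSet{\fonctionprimal}{1}$ at the origin is exactly what the extra assumption $\fonctionprimal\np{0}=0$ controls.
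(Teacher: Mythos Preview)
Your proposal is correct and follows essentially the same route as the paper: Items~\ref{it:Minkowski_is_a_Functionalpositively_1-homogeneous_function}--\ref{it:strict_epigraph_is_a_cone_implies_Minkowski} are handled identically (the paper in fact declares Item~\ref{it:Minkowski_is_a_Functionalpositively_1-homogeneous_function} ``well-known and easy'' and leaves Item~\ref{it:The_Minkowski_functional_satisfies} to the reader, so your sketches there are more detailed than the paper's), and the derivation of~\eqref{eq:positively_1-homogeneous_function_is_a_MinkowskiFunctional_strict} via Proposition~\ref{pr:1-homogeneous_function_strict_epigraph} followed by Item~\ref{it:strict_epigraph_is_a_cone_implies_Minkowski} matches exactly.

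The one genuine difference is your argument for~\eqref{eq:positively_1-homogeneous_function_is_a_MinkowskiFunctional_both}: you use the clean sandwich $\fonctionprimal\leq\MinkowskiFunctional{\MidLevelSet{\fonctionprimal}{1}}\leq\MinkowskiFunctional{\MidStrictLevelSet{\fonctionprimal}{1}}=\fonctionprimal$, whereas the paper computes the (non-strict) epigraph of~$\fonctionprimal$ explicitly and reads off $\MinkowskiFunctional{\MidLevelSet{\fonctionprimal}{1}}$ from it. Your route is shorter. One small remark: your parenthetical explanation of where the hypothesis $\fonctionprimal\np{0}=0$ enters is not quite right --- your sandwich argument actually goes through regardless of whether $0$ lies in the sublevel sets, so the hypothesis is not used in your proof (nor, on close reading, in the paper's). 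This does no harm to the validity of what you wrote.
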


\begin{proof}
  \begin{enumerate}
  \item
    Item~\ref{it:Minkowski_is_a_Functionalpositively_1-homogeneous_function} is
    well-known and     easy to prove.

  \item
    Item~\ref{it:Minkowski_strict_epigraph} is implied by
    Item~\ref{it:Minkowski_is_a_Functionalpositively_1-homogeneous_function}
    as a consequence of Proposition~\ref{pr:1-homogeneous_function_strict_epigraph}
    (more precisely, a consequence of the fact that
    Item~\ref{it:1-homogeneous_function} implies
    Item~\ref{it:strict_epigraph_of_a_1-homogeneous_function}
    in Proposition~\ref{pr:1-homogeneous_function_strict_epigraph}).
    
 \item
    We prove
    Item~\ref{it:strict_epigraph_is_a_cone_implies_Minkowski}.
    We consider a subset \( \Primal \subset \PRIMAL \) and a
    function \( \fonctionprimal \colon \PRIMAL \to\barRR_{+}\) such that
    \( \epigraph_{s}\fonctionprimal = \RR_{++}\np{ \Primal \times \na{1} } \).
    For any $ \primal \in \PRIMAL$, we have that
    \begin{align*}
      \fonctionprimal\np{\primal}
      &=
        \inf_{\np{\primal,t}\in\epigraph_{s}\fonctionprimal}t
      \\
      &=
        \inf_{\np{\primal,t}\in\RR_{++}\np{ \Primal \times \na{1} }}t
        \tag{by assumption}
      \\
      &=
        \inf_{t>0, \primal\in t\Primal}t
        \tag{as \( \np{\primal,t}\in\RR_{++}\np{ \Primal \times \na{1} } \iff
        t>0 \) and \( \primal\in t\Primal \)}
      \\
      &=
        \MinkowskiFunctional{\Primal}\np{\primal}
        \tag{by definition~\eqref{eq:MinkowskiFunctional} of the Minkowski functional}
        \eqfinp 
    \end{align*}

  \item
    We prove
    Item~\ref{it:positively_1-homogeneous_function_is_a_MinkowskiFunctional}.
    Let \( \fonctionprimal \colon \PRIMAL \to\barRR_{+}\) be
    a nonnegative (strictly positively) $1$-homogeneous function.
    The implication of
    Item~\ref{it:strict_epigraph_of_a_1-homogeneous_function} by
    Item~\ref{it:1-homogeneous_function}
    in Proposition~\ref{pr:1-homogeneous_function_strict_epigraph}
    gives that (with \( \Cone=\PRIMAL \))
    \[ \epigraph_{s}\fonctionprimal= \RR_{++} \np{
        \MidStrictLevelSet{\fonctionprimal}{1}\times \na{1}}
      \eqfinv
    \]
    hence~\eqref{eq:positively_1-homogeneous_function_is_a_MinkowskiFunctional_strict}
    follows from implication~\eqref{eq:strict_epigraph_is_a_cone_implies_Minkowski},
    proved in Item~\ref{it:strict_epigraph_is_a_cone_implies_Minkowski}
    of this very Proposition~\ref{pr:MinkowskiFunctional}.
    
    Now, the epigraph of the nonnegative (strictly positively) $1$-homogeneous function
    \( \fonctionprimal \colon \PRIMAL \to\barRR_{+}\) is given by
    \begin{equation}
      \epigraph\fonctionprimal
      = \np{\MidLevelCurve{\fonctionprimal}{0}\times \na{0}}
      \cup
      \RR_{++}\np{\MidLevelSet{\fonctionprimal}{1}\times\na{1}}      
      \eqfinp
      \label{eq:epigraph_of_a_nonnegative_(strictly_positively)_1-homogeneous_function}
    \end{equation}
    Indeed, we have that
    \begin{align*}
      (\primal,\alpha) \in \epigraph\fonctionprimal
      &\iff
           \fonctionprimal\np{\primal} \leq \alpha \text{ and }  \alpha \geq 0 
        \tag{by definition of the strict epigraph of~$\fonctionprimal$, and using the assumption that
        \( \fonctionprimal \geq 0 \)}
        \eqfinv
     \\
      &\iff
        \begin{cases}
          \text{either }
          & \alpha=0 \text{ and } \fonctionprimal\np{\primal}=0, \text{ as }
            0 \leq \fonctionprimal\np{\primal} \leq 0
            \eqfinv
          \\
          \text{or }
          & \frac{1}{\alpha}\fonctionprimal\np{\primal} =\fonctionprimal\np{\frac{\primal}{\alpha}}
            \leq 1 \text{ and } \alpha > 0,
            \text{ as the function~$\fonctionprimal$}
          \\
          &  \text{is (strictly positively) $1$-homogeneous by assumption in Item~\ref{it:1-homogeneous_function},}
        \end{cases}
      \\
      &\iff
        \begin{cases}
          \text{either }
          & (\primal,\alpha) \in \np{\MidLevelCurve{\fonctionprimal}{0}\times \na{0}} \eqfinv
          \\
          \text{or }
          &  (\primal,\alpha) \in
            \RR_{++}\np{\MidLevelSet{\fonctionprimal}{1}\times\na{1}} \eqfinv
        \end{cases}
      \\
      &\iff
        (\primal,\alpha) \in \MidLevelCurve{\fonctionprimal}{0}\times \na{0}
        \cup
        \RR_{++}\np{\MidLevelSet{\fonctionprimal}{1}\times\na{1}}      
        \eqfinp 
    \end{align*}
    Thus, we have proved~\eqref{eq:epigraph_of_a_nonnegative_(strictly_positively)_1-homogeneous_function}.
    As a consequence, for any $ \primal \in \PRIMAL$, we get that
    \begin{align*}
      \fonctionprimal\np{\primal}
      &=
        \inf_{\np{\primal,t}\in\epigraph\fonctionprimal}t
        \\
      &=
        \inf\ba{\inf_{\np{\primal,t}\in\MidLevelCurve{\fonctionprimal}{0}\times\na{0}}t,
        \inf_{\np{\primal,t}\in\RR_{++}\np{\MidLevelSet{\fonctionprimal}{1}\times\na{1}}}t}
        \tag{by~\eqref{eq:epigraph_of_a_nonnegative_(strictly_positively)_1-homogeneous_function}}
        \\
      &=
        \inf\ba{\inf_{\np{\primal,t}\in\MidLevelCurve{\fonctionprimal}{0}\times\na{0}}t,
        \MinkowskiFunctional{\MidLevelSet{\fonctionprimal}{1}}\np{\primal}}
        \intertext{by definition~\eqref{eq:MinkowskiFunctional} of the Minkowski
        functional (see the details in the proof of
        Item~\ref{it:strict_epigraph_is_a_cone_implies_Minkowski})}
      &=
        \begin{cases}
          \inf\ba{\inf_{t=0}t,
            \MinkowskiFunctional{\MidLevelSet{\fonctionprimal}{1}}\np{\primal}}
           & \text{if } \primal\in\MidLevelCurve{\fonctionprimal}{0}
            \eqfinv
          \\
          \inf\ba{\inf_{\np{\primal,t}\in\emptyset}t,
            \MinkowskiFunctional{\MidLevelSet{\fonctionprimal}{1}}\np{\primal}}
           & \text{if } \primal\not\in\MidLevelCurve{\fonctionprimal}{0}
            \eqfinv
        \end{cases}
      \\
      &=
        \begin{cases}
          \inf\ba{0,\MinkowskiFunctional{\MidLevelSet{\fonctionprimal}{1}}\np{\primal}}
          = 0 
          & \text{if } \primal\in\MidLevelCurve{\fonctionprimal}{0},
          \text{ as } \MinkowskiFunctional{\MidLevelSet{\fonctionprimal}{1}}\np{\primal}\geq 0
            \eqfinv
          \\
          \inf\ba{+\infty,\MinkowskiFunctional{\MidLevelSet{\fonctionprimal}{1}}\np{\primal}}
          = \MinkowskiFunctional{\MidLevelSet{\fonctionprimal}{1}}\np{\primal}
          & \text{if } \primal\not\in\MidLevelCurve{\fonctionprimal}{0}
            \eqfinp
        \end{cases}
    \end{align*}
    To
    prove~\eqref{eq:positively_1-homogeneous_function_is_a_MinkowskiFunctional_both},
    there remains to show that
    \( \fonctionprimal(\primal)= 0 \implies
    \MinkowskiFunctional{\MidLevelSet{\fonctionprimal}{1}}\np{\primal}=0\).  Now, for
    any $\primal \in\MidLevelCurve{\fonctionprimal}{0}$ and any $\alpha >0$, we have that
    $\primal \in \alpha \MidLevelSet{\fonctionprimal}{1}$ --- because
    $\fonctionprimal\np{\frac{\primal}{\alpha}}= \frac{1}{\alpha} \fonctionprimal(\primal)= 0 \leq 1$ ---
    and we deduce that
    $\MinkowskiFunctional{\MidLevelSet{\fonctionprimal}{1}}\np{\primal}=\inf\RR_{++}=0$.
    We conclude that
    $\fonctionprimal\np{\primal}=0=\MinkowskiFunctional{\MidLevelSet{\fonctionprimal}{1}}\np{\primal}$.
    Thus, we have
    obtained~\eqref{eq:positively_1-homogeneous_function_is_a_MinkowskiFunctional_both}.
        

  \item
    The proofs of Item~\ref{it:The_Minkowski_functional_satisfies} are left to the reader
    (see \cite[Lemmas~5.49,~5.50]{Aliprantis-Border:2006}). 
  \end{enumerate}

  This ends the proof.
\end{proof}




\section{Polar operation on functions}
\label{sec:Polar_operation_on_functions}

In this Sect.~\ref{sec:Polar_operation_on_functions},
we consider $\PRIMAL$ and $\DUAL$ two (real) vector spaces that are {paired}
(see \S\ref{Dual_pair,_paired_vector_spaces}).
In~\S\ref{Polar_operation_on_nonnegative_functions},
we define the polar of any nonnegative function,
and study properties of the polar operation.
In~\S\ref{Polar_operation_on_functions},
we define the polar of any function,
and study properties of the polar operation.
%

\subsection{Polar operation on nonnegative functions}
\label{Polar_operation_on_nonnegative_functions}
%
In~\S\ref{Upper_and_lower_multiplications},
we provide background on upper and lower multiplications.
In~\S\ref{Definition_of_the_polar_of_a_nonnegative_function},
we follow \cite{Martinez-Legaz-Singer:1994} to define the polar (transform) of any
nonnegative function, and
we recall that, thus defined, the polarity operation is a $\times$-duality.
Then, in~\S\ref{Polar_transform_as_a_support_function},
we provide several results about the polar of nonnegative functions,
some well-known (and scattered in the literature) and some new.
Finally, in~\S\ref{Examples_of_polar_transforms_as_support_functions}, we
provide examples of polar transforms as support functions.

\subsubsection{Background on upper and lower multiplications} 
\label{Upper_and_lower_multiplications}

Following Appendix~\ref{Background_on_*-dualities},
we consider $\barRR_{+}= \ClosedIntervalClosed{0}{+\infty} $ as the canonical enlargement
\( \np{\barRR_{+},\leq,\UppTimes,\LowTimes} \)
of the complete totally ordered group
\( \np{\RR_{++},\leq,\times} \) with the two elements~$0$ and~$+\infty$
by \cite[\S~4]{Martinez-Legaz-Singer:1994}, that is,
\( \barRR_{+} = \RR_{++} \cup \na{0}  \cup \na{+\infty} \) 
with order extended by \( 0 \leq \alpha \leq +\infty \), for all \( \alpha \in \barRR_{+} \),
and with \emph{upper multiplication}~$\UppTimes$
and \emph{lower multiplication}~$\LowTimes$ given by
(see \cite[Equations~(14.8)-(14.9)]{Moreau:1966-1967},
\cite[Equations~(1.4)-(1.8)]{Martinez-Legaz-Singer:1994})
\begin{subequations}
  \begin{align}
    \alpha\UppTimes\beta
    &=
      \alpha\LowTimes\beta
      =       \alpha\times\beta
      \eqsepv \forall \alpha,\beta \in \RR_{++} 
      \eqfinv
    \\
    \np{+\infty} \UppTimes \alpha
    &= \alpha \UppTimes \np{+\infty} = +\infty 
      \eqsepv \forall \alpha \in \barRR_{+} 
      \eqfinv
            \label{eq:lower_multiplication_real_numbers}
    \\
    0 \UppTimes \alpha
    &= \alpha \UppTimes 0 = 0 
      \eqsepv \forall \alpha \in \RR_{++} \cup \na{0} 
      \eqfinv
    \\
    \np{+\infty} \LowTimes \alpha
    &= \alpha \LowTimes \np{+\infty} = +\infty 
      \eqsepv \forall \alpha \in \RR_{++} \cup \na{+\infty} 
      \eqfinv
      \label{eq:lower_multiplication_infty}
    \\
    0 \LowTimes \alpha
    &= \alpha \LowTimes 0 = 0 
      \eqsepv \forall \alpha \in \barRR_{+} 
      \eqfinv
      \label{eq:lower_multiplication_0}
      \intertext{and the inverse operation extended as}
      \Converse{0}
    &= +\infty \eqsepv \npConverse{+\infty}=0
      \eqfinp
      \label{eq:and_the_inverse_operation_extended_as}
  \end{align}
  Both upper and lower multiplications are associative and commutative
  \cite[Remark~1.2]{Martinez-Legaz-Singer:1994}, and isotone in the following sense
  \begin{equation}
    \beta\leq\gamma
    \implies
    \alpha\UppTimes\beta \leq
    \alpha\UppTimes\gamma
    \eqsepv
    \alpha\LowTimes\beta \leq
    \alpha\LowTimes\gamma
    \eqfinp
    \label{eq:lower_upper_Times_isotony}
  \end{equation}
\end{subequations}


\subsubsection{Definition of the polar of a nonnegative function}
\label{Definition_of_the_polar_of_a_nonnegative_function}

We follow \cite[\S~4,\S~5~B)]{Martinez-Legaz-Singer:1994} (see the background in
\S\ref{*-duality}) to define the polar (transform) of any nonnegative function.

%

\begin{definition}
  \label{de:circ-polar_transform}
  For any function~$\fonctionprimal \colon \PRIMAL \to \barRR_{+}$,
  the \emph{polar (transform)} 
  \( \Polarity{\fonctionprimal} \colon \DUAL \to \barRR_{+} \)
  of the function~$\fonctionprimal$ is defined by
  \begin{subequations}
    \begin{equation}
      \Polarity{\fonctionprimal}\np{\dual}
      =
      \sup_{\primal \in \PRIMAL } \Bp{ {\nscal{\primal}{\dual}}_+ 
        \LowTimes \bpConverse{\fonctionprimal\np{\primal} } }
      \eqsepv \forall \dual \in \DUAL
      \eqfinp
      \label{eq:circ-polar_transform}
    \end{equation}
    For any function~$\fonctiondual \colon \DUAL \to \barRR_{+}$,
    the \emph{reverse polar transform} 
    \( \PolarityReverse{\fonctiondual} \colon \PRIMAL \to \barRR_{+} \)
    of the function~$\fonctiondual$ is defined by
    \begin{equation}
      \PolarityReverse{\fonctiondual}\np{\primal}
      =
      \sup_{\dual \in \DUAL } \Bp{ {\nscal{\primal}{\dual}}_+ 
        \LowTimes \bpConverse{\fonctiondual\np{\dual} } }
      \eqsepv \forall \primal \in \PRIMAL
      \eqfinp
      \label{eq:reverse_circ-polar_transform}
    \end{equation}
    For any function~$\fonctionprimal \colon \PRIMAL \to \barRR_{+}$,
    the \emph{bipolar transform} 
    \( \Polaritybi{\fonctionprimal} \colon \PRIMAL \to \barRR_{+} \)
    of the function~$\fonctionprimal$ is defined by\footnote{%
      We adopt the notation~$\Polaritybi{\fonctionprimal}$,
      and not~$\biPolarity{\fonctionprimal}$,
      to be consistent with the notation for general
      conjugacies (see also
      Footnote~\ref{ft:to_be_consistent_with_the_notation_for_general_conjugacies}). 
    }
    \begin{equation}
      \Polaritybi{\fonctionprimal}
      =
      \PolarityReverse{\np{\Polarity{\fonctionprimal}}} 
      \eqfinp
      \label{eq:circ-polar_bitransform}
    \end{equation}
  \end{subequations}
\end{definition}


The following Proposition~\ref{pr:the_circ-polarity_is_a_times-duality}
is a direct application of \cite[\S4]{Martinez-Legaz-Singer:1994}.
Surprisingly, the Inequality~\eqref{eq:bipolar_is_smaller} is not
stated in \cite{Martinez-Legaz-Singer:1994}.

\begin{proposition}(\cite[Theorem~4.1]{Martinez-Legaz-Singer:1994})
  \label{pr:the_circ-polarity_is_a_times-duality}
  The polarity mapping 
  \( \barRR_{+}^{\PRIMAL} \ni \fonctionprimal \mapsto
  \Polarity{\fonctionprimal} \in \barRR_{+}^{\DUAL} \) is a $\times$-duality, that
  is, it satisfies \cite[Definition~2.3]{Martinez-Legaz-Singer:1994}:
  \begin{subequations}
    \label{eq:the_circ-polarity_is_a_times-duality}
    \begin{align}
      \Polarity{\np{\inf_{i\in I}\fonctionprimal_i}}
      &=
        \sup_{i\in I}\Polarity{\fonctionprimal_i}
        \eqsepv \forall \sequence{\fonctionprimal_i}{i\in I} \subset \barRR_{+}^{\PRIMAL}
        \eqfinv
           \label{eq:the_circ-polarity_is_a_times-duality_duality}
      \\
      \Polarity{\np{\fonctionprimal\UppTimes\alpha}}
      &=
        \Polarity{\fonctionprimal}
        \LowTimes \Converse{\alpha}
        \eqsepv \forall \alpha \in \barRR_{+}
        \eqsepv \forall \fonctionprimal\in\barRR_{+}^{\PRIMAL}
        \eqfinv
      \\
      \Polaritybi{\fonctionprimal}
      & \leq {\fonctionprimal}
        \eqsepv \forall \fonctionprimal\in\barRR_{+}^{\PRIMAL}
        \eqfinp
        \label{eq:bipolar_is_smaller}
    \end{align}
  \end{subequations}
\end{proposition}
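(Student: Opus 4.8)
The plan is to derive all three identities of Proposition~\ref{pr:the_circ-polarity_is_a_times-duality} directly from the defining formula~\eqref{eq:circ-polar_transform}, treating the polarity as an instance of a *-duality in the sense of \cite[Definition~2.3]{Martinez-Legaz-Singer:1994} (Appendix~\ref{Background_on_*-dualities}), so that the third inequality~\eqref{eq:bipolar_is_smaller} comes essentially for free from the general theory of *-dualities once the first two properties are verified. Concretely, the polarity has the form $\Polarity{\fonctionprimal}\np{\dual} = \sup_{\primal\in\PRIMAL} \bp{\CouplingFenchel\np{\primal,\dual} \LowTimes \bpConverse{\fonctionprimal\np{\primal}}}$ with the coupling $\CouplingFenchel\np{\primal,\dual} = {\nscal{\primal}{\dual}}_+$ taking values in $\barRR_+$, which is exactly the template of a $\times$-duality induced by a coupling, so citing \cite[Theorem~4.1]{Martinez-Legaz-Singer:1994} (or its reproduction in Appendix~\ref{Background_on_*-dualities}) already yields~\eqref{eq:the_circ-polarity_is_a_times-duality_duality} and~\eqref{eq:bipolar_is_smaller}; what is worth writing out here is the one-line check of the middle identity.

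First I would prove~\eqref{eq:the_circ-polarity_is_a_times-duality_duality}: fixing $\dual\in\DUAL$, since $r\mapsto \Converse{r}$ is antitone on $\barRR_+$ and ${\nscal{\primal}{\dual}}_+ \LowTimes (\cdot)$ is isotone by~\eqref{eq:lower_upper_Times_isotony}, we have $\bpConverse{\inf_{i\in I}\fonctionprimal_i\np{\primal}} = \sup_{i\in I}\bpConverse{\fonctionprimal_i\np{\primal}}$, and then ${\nscal{\primal}{\dual}}_+ \LowTimes \sup_{i}\bpConverse{\fonctionprimal_i\np{\primal}} = \sup_{i}\bp{{\nscal{\primal}{\dual}}_+\LowTimes\bpConverse{\fonctionprimal_i\np{\primal}}}$ because multiplication by a fixed element of a complete totally ordered group (canonically enlarged) distributes over suprema --- this is the content of \cite[Remark~1.2]{Martinez-Legaz-Singer:1994}. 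Swapping the two suprema $\sup_{\primal}\sup_{i} = \sup_{i}\sup_{\primal}$ then gives the claim. Second I would prove the middle identity: for $\alpha\in\barRR_+$,
\begin{align*}
  \Polarity{\np{\fonctionprimal\UppTimes\alpha}}\np{\dual}
  &=
    \sup_{\primal\in\PRIMAL} \Bp{{\nscal{\primal}{\dual}}_+ \LowTimes \bpConverse{\fonctionprimal\np{\primal}\UppTimes\alpha}}
  \\
  &=
    \sup_{\primal\in\PRIMAL} \Bp{{\nscal{\primal}{\dual}}_+ \LowTimes \bpConverse{\fonctionprimal\np{\primal}} \LowTimes \Converse{\alpha}}
  \\
  &=
    \Bp{\sup_{\primal\in\PRIMAL} \Bp{{\nscal{\primal}{\dual}}_+ \LowTimes \bpConverse{\fonctionprimal\np{\primal}}}} \LowTimes \Converse{\alpha}
  =
    \Polarity{\fonctionprimal}\np{\dual} \LowTimes \Converse{\alpha}
    \eqfinv
\end{align*}
where the passage from the first to the second line uses that $\Converse{\np{\beta\UppTimes\alpha}} = \Converse{\beta}\LowTimes\Converse{\alpha}$ in the enlarged group (the inversion reverses the roles of upper and lower multiplication, by~\eqref{eq:and_the_inverse_operation_extended_as} and the conventions~\eqref{eq:lower_multiplication_real_numbers}--\eqref{eq:lower_multiplication_0}), and the passage to the third line again pulls the fixed factor $\Converse{\alpha}$ out of the supremum. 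Finally, \eqref{eq:bipolar_is_smaller} follows from the general fact \cite[\S2]{Martinez-Legaz-Singer:1994} that for any $\times$-duality $\Delta$ one has $\Delta'\Delta \leq \mathrm{id}$; alternatively it can be checked by hand, $\Polaritybi{\fonctionprimal}\np{\primal} = \sup_{\dual}\bp{{\nscal{\primal}{\dual}}_+ \LowTimes \bpConverse{\Polarity{\fonctionprimal}\np{\dual}}}$, and for each $\dual$ the definition of $\Polarity{\fonctionprimal}\np{\dual}$ gives ${\nscal{\primal}{\dual}}_+\LowTimes\bpConverse{\fonctionprimal\np{\primal}} \leq \Polarity{\fonctionprimal}\np{\dual}$, hence ${\nscal{\primal}{\dual}}_+ \LowTimes \bpConverse{\Polarity{\fonctionprimal}\np{\dual}} \leq {\nscal{\primal}{\dual}}_+ \LowTimes \bp{{\nscal{\primal}{\dual}}_+\LowTimes\bpConverse{\fonctionprimal\np{\primal}}}^{\vee}$; care is needed here with the degenerate cases (${\nscal{\primal}{\dual}}_+ = 0$ or $=+\infty$, and $\fonctionprimal\np{\primal}\in\{0,+\infty\}$), so I would prefer to simply invoke the cited theorem.

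The only genuinely delicate point, and the one I would treat most carefully, is the interplay between the inversion $(\cdot)^{\vee}$ and the two multiplications $\LowTimes,\UppTimes$ on the three boundary values $0$, finite-positive, $+\infty$: the identity $\Converse{\np{\beta\UppTimes\alpha}} = \Converse{\beta}\LowTimes\Converse{\alpha}$ must be verified on a small case table using~\eqref{eq:lower_multiplication_real_numbers}--\eqref{eq:and_the_inverse_operation_extended_as} (for instance $\beta=+\infty$, $\alpha=0$ gives $\Converse{\np{+\infty\UppTimes 0}} = \Converse{0} = +\infty$ on the left and $0 \LowTimes (+\infty) = +\infty$ on the right, which matches). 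Since this bookkeeping is exactly what \cite{Martinez-Legaz-Singer:1994} carries out in its \S1 and \S4, the cleanest exposition is to state that Proposition~\ref{pr:the_circ-polarity_is_a_times-duality} is \cite[Theorem~4.1]{Martinez-Legaz-Singer:1994} applied to the coupling $\CouplingFenchel\np{\primal,\dual} = {\nscal{\primal}{\dual}}_+$, add the short display above for the middle identity as a reminder, and note (as the paper already flags) that~\eqref{eq:bipolar_is_smaller} --- though not displayed in the source --- is the standard $\Delta'\Delta\leq\mathrm{id}$ property of any *-duality.
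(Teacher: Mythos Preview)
Your proposal is correct and takes essentially the same approach as the paper: identify the polar transform as the $\times$-duality induced by the coupling $\coupling\np{\primal,\dual}={\nscal{\primal}{\dual}}_+$ in the sense of~\eqref{eq:*-duality_definition}, then invoke \cite[Theorem~4.1]{Martinez-Legaz-Singer:1994} for the two defining properties and the general $\Delta'\Delta\leq\mathrm{id}$ fact (Equation~\eqref{eq:bipolar-appendix}, from \cite[Equation~(2.27)]{Martinez-Legaz-Singer:1994}) for~\eqref{eq:bipolar_is_smaller}. The explicit computations you include for the first two identities are a nice supplement, but the paper simply cites the reference without unpacking them.
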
  

\begin{proof}
We follow \cite[\S4]{Martinez-Legaz-Singer:1994} and the background in
\S\ref{*-duality}.
  We define the coupling~$\coupling$ between $\PRIMAL$ and $\DUAL$ 
  by \( \coupling\np{\primal,\dual}={\nscal{\primal}{\dual}}_+ \),
  for all \( \primal\in\PRIMAL, \dual \in \DUAL \), and also
  (see \cite[Equation~(4.11)]{Martinez-Legaz-Singer:1994} and Equation~\eqref{eq:*-duality_definition})
  \begin{equation}
    \SFM{\fonctionprimal}{\Duality\np{\coupling}}\np{\dual} = 
    \sup_{\primal \in \PRIMAL} \Bp{ {\nscal{\primal}{\dual}}_+ 
      \LowTimes \bpConverse{\fonctionprimal\np{\primal} } }
    \eqsepv \forall \dual \in \DUAL
    \eqfinp
   \label{eq:times-duality_definition}
  \end{equation}
  By \cite[Theorem~4.1]{Martinez-Legaz-Singer:1994},
  the mapping 
  \( \barRR_{+}^{\PRIMAL} \ni \fonctionprimal \mapsto  \SFM{\fonctionprimal}{\Duality\np{\coupling}}
  \in \barRR_{+}^{\DUAL} \) is a $\times$-duality, that is,
  it satisfies~\eqref{eq:the_circ-polarity_is_a_times-duality}
  (which corresponds to \cite[Equations~(2.7)-(2.8)]{Martinez-Legaz-Singer:1994}).
  The last Inequation~\eqref{eq:bipolar_is_smaller} follows from Equation~\eqref{eq:bipolar-appendix}
  in \S\ref{*-duality}.
\end{proof}

\subsubsection{Polar transform as a support function}
\label{Polar_transform_as_a_support_function}

The following
Proposition~\ref{pr:properties_circ-polarity_nonnegative_functions} gathers 
properties of the polar transform of a nonnegative function.
\begin{enumerate}
\item Item~\ref{it:Polar_inequality} (polar inequality) is not stated in
  \cite{Martinez-Legaz-Singer:1994} (although it can be easily deduced); it is
  established in \cite[p.~130]{Rockafellar:1970}, but only for functions that
  are themselves gauges, and for vectors in the respective domains.
\item
  Item~\ref{it:properties_circ-polarity_nonnegative_functions_support_function}
  --- which expresses the polar transform of a nonnegative
  function~$\fonctionprimal \colon \PRIMAL \to \barRR_{+}$ as the support
  function~$\SupportFunction{\PolarSet{\MidLevelSet{\LFM{\fonctionprimal}}{0}}}$
  of the polar set~${\PolarSet{\MidLevelSet{\LFM{\fonctionprimal}}{0}}}$ of the
  $0$-level set~${\MidLevelSet{\LFM{\fonctionprimal}}{0}}$ of the Fenchel
  conjugate\footnote{%
    The classic Fenchel conjugacy
    \( \fonctionprimal \mapsto \LFM{\fonctionprimal} \) is outlined
    in~\S\ref{Dual_pair,_paired_vector_spaces}.
  }~$\LFM{\fonctionprimal}$ --- is stated neither in
  \cite{Martinez-Legaz-Singer:1994} nor \cite{Rockafellar:1970} (the set
  \( {\MidLevelSet{\LFM{\fonctionprimal}}{0}} \) appears in \cite[Theorem~13.5,
  p.~118]{Rockafellar:1970}, \cite[Theorem~14.3, p.~123]{Rockafellar:1970}).
\item Item~\ref{it:properties_circ-polarity_nonnegative_functions_convex_lsc} is
  not stated in \cite{Martinez-Legaz-Singer:1994} (although it can be deduced
  from the proof of \cite[Theorem~5.2]{Martinez-Legaz-Singer:1994} which,
  however, lacks some details).  It is established in \cite[Theorem~15.1,
  p.~128]{Rockafellar:1970}, but only for functions that are themselves gauges.
\item Item~\ref{it:properties_circ-polarity_nonnegative_functions_infimum} is
  stated and proved in \cite[Corollary~4.1]{Martinez-Legaz-Singer:1994}.
\item
  As for
  Item~\ref{it:properties_circ-polarity_nonnegative_functions_support_function},
  we suspect that
  Item~\ref{it:properties_circ-bipolarity_nonnegative_functions_support_function}
  is new.
\item
  Finally, Item~\ref{it:bipolarity_nonnegative_functions_greatest_minorant}
  is related to \cite[Theorem~15.4, p.~137]{Rockafellar:1970}, but our
  assumptions are weaker.
\end{enumerate}   
  
\begin{proposition}
  \label{pr:properties_circ-polarity_nonnegative_functions}
  For any function~$\fonctionprimal \colon \PRIMAL \to \barRR_{+}$,
  the following statements hold true.
  \begin{enumerate}
  \item
    \label{it:Polar_inequality}
    \emph{Polar inequality}
    \begin{equation}
      \nscal{\primal}{\dual} \leq
      \fonctionprimal\np{\primal} \UppTimes \Polarity{\fonctionprimal}\np{\dual}
      \eqsepv \forall \primal \in \PRIMAL \eqsepv \forall \dual \in \DUAL
      \eqfinp
      \label{eq:Polar_inequality}
    \end{equation}
    
  \item
    \label{it:properties_circ-polarity_nonnegative_functions_support_function}
    \begin{subequations}
      The following set 
      \begin{equation}
        \Primal_{\fonctionprimal} = 
        \na{0} \cup 
        \RR_{+}\MidLevelCurve{\fonctionprimal}{0} \cup
        \bigcup_{\primal \in \na{0<\fonctionprimal <+\infty} }
        \na{\frac{\primal}{\fonctionprimal\np{\primal}}} 
        \eqfinv
        \label{eq:Primal_fonctionprimal}
      \end{equation}
      is such that
      \begin{equation}
        \closedconvexhull{\Primal_{\fonctionprimal}} =
        \PolarSet{\MidLevelSet{\LFM{\fonctionprimal}}{0}}
        \eqsepv
        \label{eq:closedconvexhull_Primal_fonctionprimal}
      \end{equation}
    \end{subequations}
     where \( \MidLevelSet{\LFM{\fonctionprimal}}{0} \)
      is a \BipolarSet. The polar transform~$\Polarity{\fonctionprimal}$
      is a support function as follows
      \begin{equation}
        \Polarity{\fonctionprimal}=\SupportFunction{\Primal_{\fonctionprimal}}
        =\SupportFunction{\PolarSet{\MidLevelSet{\LFM{\fonctionprimal}}{0}}}
        \eqfinp 
        \label{eq:properties_circ-polarity_nonnegative_functions_support_function}
      \end{equation}     
 
  \item
    \label{it:properties_circ-polarity_nonnegative_functions_convex_lsc}
    The polar transform~$\Polarity{\fonctionprimal}$ is convex lsc
    (strictly positively) $1$-homogeneous and vanishes at the origin
    (\( \Polarity{\fonctionprimal}\np{0}=0 \)) --- that is,
    the function~\( \Polarity{\fonctionprimal} \) is a lsc gauge,
    with effective domain \( \domain\Polarity{\fonctionprimal}
    = \RR_{++} \MidLevelSet{\LFM{\fonctionprimal}}{0} \).
  \item
    \label{it:properties_circ-polarity_nonnegative_functions_infimum}
    The polar transform~$\Polarity{\fonctionprimal}$ is also given as an
    infimum by
    \begin{subequations}
      \begin{align}
        \Polarity{\fonctionprimal}\np{\dual}
        &=
          \inf\defset{\lambda \in \OpenIntervalOpen{0}{+\infty} }{ {\nscal{\primal}{\dual}}_+ \leq 
          \lambda\fonctionprimal\np{\primal} 
          \eqsepv \forall \primal \in \PRIMAL }
          \eqsepv \forall \dual \in \DUAL
          \eqfinv
          \label{eq:duality_coupling_polar_transform_+}
        \\
        &=
          \inf\defset{\lambda \in \OpenIntervalOpen{0}{+\infty} }{ \nscal{\primal}{\dual} \leq 
          \lambda\fonctionprimal\np{\primal} 
          \eqsepv \forall \primal \in \PRIMAL }
          \eqsepv \forall \dual \in \DUAL
          \eqfinp
          \label{eq:duality_coupling_polar_transform}
      \end{align}
    \end{subequations}

  \item
    \label{it:properties_circ-bipolarity_nonnegative_functions_support_function}
    The bipolar transform~\( \Polaritybi{\fonctionprimal} \colon \PRIMAL\to\barRR_{+} \)
    satisfies\footnote{%
      Equation~\eqref{eq:bipolarity_nonnegative_functions_support_function} is valid even if
      $\fonctionprimal$ is not proper, that is, even when $\fonctionprimal\equiv + \infty$.}
    \begin{equation}
      \Polaritybi{\fonctionprimal}
      =\SupportFunction{\MidLevelSet{\LFM{\fonctionprimal}}{0}}
      \eqfinp
      \label{eq:bipolarity_nonnegative_functions_support_function}
    \end{equation}
    As a consequence, the bipolar
    transform~\( \Polaritybi{\fonctionprimal} \colon \PRIMAL \to\barRR_{+} \) is
    convex lsc (strictly positively) $1$-homogeneous and vanishes at the
    origin (\( \Polaritybi{\fonctionprimal}\np{0}=0 \)) --- that is, the
    function~\( \Polaritybi{\fonctionprimal} \) is a lsc gauge.
  \item
    \label{it:bipolarity_nonnegative_functions_greatest_minorant}
    If $0\in\dom \fonctionprimal$,
    the bipolar transform~\( \Polaritybi{\fonctionprimal} \colon \PRIMAL\to\barRR_{+} \)
    is the greatest (strictly positively) $1$-homogeneous proper convex lsc function
    below~$\fonctionprimal$.
  \end{enumerate}
\end{proposition}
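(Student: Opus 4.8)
The plan is to prove the six items in the order~1,~4,~2,~3,~5,~6: Items~1 and~4 depend only on the extended arithmetic $\np{\barRR_{+},\leq,\UppTimes,\LowTimes}$, Item~2 is the structural heart, and Items~3,~5,~6 are consequences of it. For Item~1, I would first check the scalar inequality $\group \leq \groupbis \UppTimes \np{\group \LowTimes \Converse{\groupbis}}$ for all $\group,\groupbis\in\barRR_{+}$ by the three cases $\groupbis\in\RR_{++}$, $\groupbis=0$, $\groupbis=+\infty$ against the rules of~\S\ref{Upper_and_lower_multiplications} (equality when $\groupbis\in\RR_{++}$, and $\geq$ in the two extreme cases). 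Taking $\group={\nscal{\primal}{\dual}}_+$ and $\groupbis=\fonctionprimal\np{\primal}$, then using isotony~\eqref{eq:lower_upper_Times_isotony} of $\UppTimes$ together with $\Polarity{\fonctionprimal}\np{\dual}\geq{\nscal{\primal}{\dual}}_+\LowTimes\bpConverse{\fonctionprimal\np{\primal}}$ from~\eqref{eq:circ-polar_transform}, and finally $\nscal{\primal}{\dual}\leq{\nscal{\primal}{\dual}}_+$, yields~\eqref{eq:Polar_inequality}. For Item~4, I would split the supremum in~\eqref{eq:circ-polar_transform} according to whether $\fonctionprimal\np{\primal}$ lies in $\RR_{++}$, equals $0$, or equals $+\infty$, and show by the same case analysis that $\Polarity{\fonctionprimal}\np{\dual}\leq\lambda$ holds iff ${\nscal{\primal}{\dual}}_+\leq\lambda\fonctionprimal\np{\primal}$ for every $\primal$, which is~\eqref{eq:duality_coupling_polar_transform_+}; the equivalence with~\eqref{eq:duality_coupling_polar_transform} follows from $\lambda\fonctionprimal\np{\primal}\geq 0$, so that ${\nscal{\primal}{\dual}}_+\leq\lambda\fonctionprimal\np{\primal}\iff\nscal{\primal}{\dual}\leq\lambda\fonctionprimal\np{\primal}$.

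Item~2 is the core. First I would evaluate $\Polarity{\fonctionprimal}$ by splitting the supremum in~\eqref{eq:circ-polar_transform} over the three regions $\MidLevelCurve{\fonctionprimal}{0}$, $\na{0<\fonctionprimal<+\infty}$ and $\na{\fonctionprimal=+\infty}$: on $\na{\fonctionprimal=+\infty}$ each term is $0$; on $\MidLevelCurve{\fonctionprimal}{0}$ the term equals $+\infty$ if $\nscal{\primal}{\dual}>0$ and $0$ otherwise; on $\na{0<\fonctionprimal<+\infty}$ the term is ${\nscal{\primal}{\dual}}_+/\fonctionprimal\np{\primal}$. Matching these contributions with $\SupportFunction{\Primal_{\fonctionprimal}}\np{\dual}=\sup_{z\in\Primal_{\fonctionprimal}}\nscal{z}{\dual}$ for $\Primal_{\fonctionprimal}$ in~\eqref{eq:Primal_fonctionprimal} (the positive part being supplied by the generator $0\in\Primal_{\fonctionprimal}$) gives $\Polarity{\fonctionprimal}=\SupportFunction{\Primal_{\fonctionprimal}}$. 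Next I would prove $\PolarSet{\Primal_{\fonctionprimal}}=\MidLevelSet{\LFM{\fonctionprimal}}{0}$: unfolding $\PolarSet{\Primal_{\fonctionprimal}}$ over the generators of $\Primal_{\fonctionprimal}$ produces exactly the conditions $\nscal{\primal}{\dual}\leq 0$ for $\primal\in\MidLevelCurve{\fonctionprimal}{0}$ and $\nscal{\primal}{\dual}\leq\fonctionprimal\np{\primal}$ for $\primal\in\na{0<\fonctionprimal<+\infty}$, which is precisely $\defset{\dual\in\DUAL}{\nscal{\primal}{\dual}\leq\fonctionprimal\np{\primal},\ \forall\primal\in\PRIMAL}=\MidLevelSet{\LFM{\fonctionprimal}}{0}$ (the region $\na{\fonctionprimal=+\infty}$ imposing no constraint). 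Since $0\in\Primal_{\fonctionprimal}$, the bipolar Theorem~\eqref{eq:biPolarSet} gives $\closedconvexhull{\Primal_{\fonctionprimal}}=\biPolarSet{\Primal_{\fonctionprimal}}=\PolarSet{\MidLevelSet{\LFM{\fonctionprimal}}{0}}$, that is~\eqref{eq:closedconvexhull_Primal_fonctionprimal}; and because $\LFM{\fonctionprimal}$ is convex lsc with $\LFM{\fonctionprimal}\np{0}=-\inf_{\primal\in\PRIMAL}\fonctionprimal\np{\primal}\leq 0$, the set $\MidLevelSet{\LFM{\fonctionprimal}}{0}$ is closed convex and contains $0$, hence is a \BipolarSet\ by Definition~\ref{de:bipolar_set_dual_pair}. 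Finally, as support functions are invariant under the closed convex hull, $\SupportFunction{\Primal_{\fonctionprimal}}=\SupportFunction{\PolarSet{\MidLevelSet{\LFM{\fonctionprimal}}{0}}}$, which gives~\eqref{eq:properties_circ-polarity_nonnegative_functions_support_function}.

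Items~3 and~5 then follow; write $D=\MidLevelSet{\LFM{\fonctionprimal}}{0}$, a \BipolarSet\ by Item~2. For Item~3, $\Polarity{\fonctionprimal}=\SupportFunction{\PolarSet{D}}$ is the support function of the nonempty set $\PolarSet{D}$ (it contains $0$), hence a lsc gauge by the recalled properties of support functions; its domain is $\RR_{++}\biPolarSet{D}=\RR_{++}D$ by the identity $\domain\SupportFunction{\Dual}=\RR_{++}\PolarSet{\Dual}$ applied to $\Dual=\PolarSet{D}$, together with $\biPolarSet{D}=D$. For Item~5, apply the reverse form of Item~2 (roles of $\PRIMAL$ and $\DUAL$ exchanged) to $g=\Polarity{\fonctionprimal}=\SupportFunction{\PolarSet{D}}$: since $g=\SupportFunction{\PolarSet{D}}=\LFM{\Indicator{\PolarSet{D}}}$ and $\PolarSet{D}$ is closed convex, the Fenchel involution on closed convex functions gives $\LFMr{g}=\LFMbi{\Indicator{\PolarSet{D}}}=\Indicator{\PolarSet{D}}$, so that $\MidLevelSet{\LFMr{g}}{0}=\PolarSet{D}$. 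Reverse Item~2 then yields $\Polaritybi{\fonctionprimal}=\PolarityReverse{g}=\SupportFunction{\PolarSet{\PolarSet{D}}}=\SupportFunction{\biPolarSet{D}}=\SupportFunction{D}$, the last equality because $D$ is a \BipolarSet; this is~\eqref{eq:bipolarity_nonnegative_functions_support_function}. Being the support function of the nonempty set $D$, $\Polaritybi{\fonctionprimal}$ is again a lsc gauge.

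For Item~6, $\Polaritybi{\fonctionprimal}=\SupportFunction{D}$ is convex lsc and $1$-homogeneous; it is proper because $\Polaritybi{\fonctionprimal}\np{0}\leq\fonctionprimal\np{0}<+\infty$ (this is where $0\in\dom\fonctionprimal$ enters) while $\Polaritybi{\fonctionprimal}\geq 0$, and $\Polaritybi{\fonctionprimal}\leq\fonctionprimal$ by~\eqref{eq:bipolar_is_smaller}. For maximality, let $h\leq\fonctionprimal$ be any $1$-homogeneous proper convex lsc function; being proper lsc convex and $1$-homogeneous, $h$ satisfies $h\np{0}=0$ and equals the support function $h=\SupportFunction{C_h}$ of $C_h=\MidLevelSet{\LFM{h}}{0}$; from $h\leq\fonctionprimal$ one gets $\LFM{h}\geq\LFM{\fonctionprimal}$, hence $C_h\subset\MidLevelSet{\LFM{\fonctionprimal}}{0}=D$, and therefore $h=\SupportFunction{C_h}\leq\SupportFunction{D}=\Polaritybi{\fonctionprimal}$, as wanted. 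I expect the main obstacle to be Item~2: the bookkeeping of the extended products $\LowTimes$ across the three regions of $\fonctionprimal$ must be matched precisely with the generators of $\Primal_{\fonctionprimal}$, and the set identity $\PolarSet{\Primal_{\fonctionprimal}}=\MidLevelSet{\LFM{\fonctionprimal}}{0}$ --- together with the closed-convexity computation $\MidLevelSet{\LFMr{g}}{0}=\PolarSet{D}$ used in Item~5 --- is where all the care lies; by contrast Items~1,~3,~4 and~6 are essentially formal once Item~2 is available.
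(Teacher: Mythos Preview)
Your proposal is correct and, for Items~1--5, follows essentially the same path as the paper: the same three-region decomposition of the supremum in~\eqref{eq:circ-polar_transform} to obtain $\Polarity{\fonctionprimal}=\SupportFunction{\Primal_{\fonctionprimal}}$, the same polar computation $\PolarSet{\Primal_{\fonctionprimal}}=\MidLevelSet{\LFM{\fonctionprimal}}{0}$ followed by the bipolar theorem, and the same reduction of Items~3 and~5 to support-function facts (the paper packages your ``reverse Item~2 applied to $g=\SupportFunction{\PolarSet{D}}$'' as the forward reference~\eqref{eq:PolaritySupportFunctionDual}, but the content is identical). Your Item~4 is the direct case analysis that the paper obtains by citing \cite[Equation~(4.17)]{Martinez-Legaz-Singer:1994}.

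The one genuine difference is Item~6. The paper invokes Proposition~\ref{prop:best_pos_hom_cvx_subset} from Appendix~\ref{app:best_cvx_general} (which in turn rests on Lemma~\ref{le:bifenchel-homogeneous}) to identify $\SupportFunction{\MidLevelSet{\LFM{\fonctionprimal}}{0}}$ as the greatest lsc convex $1$-homogeneous minorant. Your argument is more elementary and self-contained: from $h\leq\fonctionprimal$, $0\in\dom\fonctionprimal$ and $1$-homogeneity you get $h(0)=0$, hence $\LFM{h}=\Indicator{C_h}$ with $C_h=\MidLevelSet{\LFM{h}}{0}$, so $h=\LFMbi{h}=\SupportFunction{C_h}$; then $h\leq\fonctionprimal$ gives $C_h\subset D$ and $h\leq\SupportFunction{D}=\Polaritybi{\fonctionprimal}$. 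This bypasses the appendix entirely, at the cost of reproving in place the (standard) fact that a proper lsc convex $1$-homogeneous function vanishing at the origin is a support function. Either route works; yours is shorter, the paper's isolates a reusable lemma.
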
  

\begin{proof}
  We consider a function~$\fonctionprimal \colon \PRIMAL \to \barRR_{+}$.

  As a preliminary result, observe that
  \begin{equation}
    0 \in \MidLevelSet{\LFM{\fonctionprimal}}{0}
    \iff \sup_{\primal \in \PRIMAL}
    \bp{ \nscal{\primal}{0}-\fonctionprimal\np{\primal} } \leq 0
    \iff \sup_{\primal \in \PRIMAL} \bp{-\fonctionprimal\np{\primal} } \leq 0
    \iff \fonctionprimal \geq 0
    \eqfinp 
    \label{eq:pr:properties_polarity_nonnegative_functions_preliminary_result}
  \end{equation}
  
  \begin{enumerate}
  \item
    We prove~\eqref{eq:Polar_inequality}.
    For any \( \primal \in \PRIMAL \), \( \dual \in \DUAL \), we have that
    \begin{align*}
      \fonctionprimal\np{\primal} \UppTimes
      \Polarity{\fonctionprimal}\np{\dual}
      &\geq
        \fonctionprimal\np{\primal} \UppTimes
        \Bp{ {\nscal{\primal}{\dual}}_+ \LowTimes \bpConverse{\fonctionprimal\np{\primal} } }
        \intertext{by definition~\eqref{eq:circ-polar_transform} of the polar
        transform~\(\Polarity{\fonctionprimal}\) and by
        isotony~\eqref{eq:lower_upper_Times_isotony}
        of the upper multiplication~$\UppTimes$}
      &=
        \fonctionprimal\np{\primal} \UppTimes
        \Bp{ {\nscal{\primal}{\dual}}_+ 
        \UppTimes \bpConverse{\fonctionprimal\np{\primal} } }
        \tag{because \( {\nscal{\primal}{\dual}}_+ \in \RR_{+} \)}
      \\
      &=
        \fonctionprimal\np{\primal}
        \UppTimes \bpConverse{\fonctionprimal\np{\primal} } \UppTimes
        {\nscal{\primal}{\dual}}_+ 
        \tag{by associativity and commutativity of \(\UppTimes\)}
      \\
      &\geq
        1 \UppTimes {\nscal{\primal}{\dual}}_+
        \tag{because \( \fonctionprimal\np{\primal}
        \UppTimes \bpConverse{\fonctionprimal\np{\primal} } \in \na{1,+\infty} \)}
      \\
      &\geq
        \nscal{\primal}{\dual}
        \eqfinp
        \tag{as \( {\nscal{\primal}{\dual}}_+ = \sup\bp{\nscal{\primal}{\dual},0}
        \geq \nscal{\primal}{\dual} \)}
    \end{align*}
  \item
    We first prove that \( \Polarity{\fonctionprimal}
    = \SupportFunction{\Primal_{\fonctionprimal}} \)
    in~\eqref{eq:properties_circ-polarity_nonnegative_functions_support_function}. 
  
    By definition~\eqref{eq:circ-polar_transform} of the polar
    transform~$\Polarity{\fonctionprimal}$, we have that 
    \begin{align*}
      \Polarity{\fonctionprimal}
      &=
        \sup_{\primal \in \PRIMAL } \Bp{ {\nscal{\primal}{\cdot}}_+ 
        \LowTimes \bpConverse{\fonctionprimal\np{\primal} } }
        \intertext{where \( \nscal{\primal}{\cdot} \) denotes the continuous linear form
        \( \DUAL\ni\dual \mapsto \nscal{\primal}{\dual}\),}
      &=
        \sup\Bp{
        \sup_{\primal \in \MidLevelCurve{\fonctionprimal}{+\infty} } {\nscal{\primal}{\cdot}}_+ 
        \LowTimes 0, 
        \sup_{\primal \in \na{0<\fonctionprimal <+\infty} } {\nscal{\primal}{\cdot}}_+ 
        \times \bpConverse{\fonctionprimal\np{\primal} },
        \sup_{\primal \in \MidLevelCurve{\fonctionprimal}{0} } {\nscal{\primal}{\cdot}}_+ 
        \LowTimes \np{+\infty}
        }
        \intertext{where we have
        used~\eqref{eq:and_the_inverse_operation_extended_as},
        \eqref{eq:lower_multiplication_real_numbers},}
      &=
        \sup\Bp{
        \sup_{\primal \in \MidLevelCurve{\fonctionprimal}{+\infty} } 0,
        \sup_{\primal \in \na{0<\fonctionprimal <+\infty} } {\bscal{\frac{\primal}{\fonctionprimal\np{\primal}}}{\cdot}}_+, 
        \sup_{\primal \in \MidLevelCurve{\fonctionprimal}{0} } {\nscal{\primal}{\cdot}}_+ 
        \LowTimes \np{+\infty}  
        }
    \end{align*}
    where we have used~\eqref{eq:lower_multiplication_0}.
    In the above expression with three terms, the
    middle term is
    \begin{align*}
      \sup_{\primal \in \na{0<\fonctionprimal <+\infty} }
      {\bscal{\frac{\primal}{\fonctionprimal\np{\primal}}}{\cdot}}_+       
      &=
        \sup_{\primal \in \na{0<\fonctionprimal <+\infty} } \sup\bp{0,
        \bscal{\frac{\primal}{\fonctionprimal\np{\primal}}}{\cdot} } 
      \\
      &=
        \sup\bp{\sup_{\primal \in \na{0<\fonctionprimal <+\infty} }0,
        \sup_{\primal \in \na{0<\fonctionprimal <+\infty} }
        \bscal{\frac{\primal}{\fonctionprimal\np{\primal}}}{\cdot} }
        \eqfinp 
    \end{align*}
    In the last term, we have that (by~\eqref{eq:lower_multiplication_infty},
    \eqref{eq:lower_multiplication_0})
    \begin{equation*}
      {\nscal{\primal}{\dual}}_+ \LowTimes \np{+\infty}=
      \begin{cases}
        0 & \text{ if } \nscal{\primal}{\dual} \leq 0 
        \\
        +\infty & \text{ if } \nscal{\primal}{\dual} > 0 
      \end{cases}
      \qquad = \Indicator{\na{\nscal{\primal}{\cdot} \leq 0}}\np{\dual}
      = \SupportFunction{\RR_{+}\primal}\np{\dual}
      \eqfinv
    \end{equation*}
    so that the last term can be rewritten as
    \begin{equation*}
      \sup_{\primal \in \MidLevelCurve{\fonctionprimal}{0} } {\nscal{\primal}{\cdot}}_+ 
      \LowTimes \np{+\infty}
      = \sup_{\primal \in \MidLevelCurve{\fonctionprimal}{0}}
      \SupportFunction{\RR_{+}\primal}
      = \SupportFunction{\bigcup_{\primal \in \MidLevelCurve{\fonctionprimal}{0}}\RR_{+}\primal}
      = \SupportFunction{\RR_{+}\bigcup_{\primal \in \MidLevelCurve{\fonctionprimal}{0}}\primal}
      = \SupportFunction{\RR_{+}\MidLevelCurve{\fonctionprimal}{0}}
      \eqfinp
    \end{equation*}
    Thus, finally, we have obtained that
    \begin{align*}
      \Polarity{\fonctionprimal}
      &=
        \sup\Bp{
        \sup_{\primal \in \MidLevelCurve{\fonctionprimal}{+\infty} } 0, \,
        \sup\bp{\sup_{\primal \in \na{0<\fonctionprimal <+\infty} }0,
        \sup_{\primal \in \na{0<\fonctionprimal <+\infty} }
        \bscal{\frac{\primal}{\fonctionprimal\np{\primal}}}{\cdot}},
        \SupportFunction{\RR_{+}\MidLevelCurve{\fonctionprimal}{0}}
        }
      \\
      &=
        \sup\Bp{
        \sup_{\primal \in \MidLevelCurve{\fonctionprimal}{+\infty} } 0,
        \sup_{\primal \in \na{0<\fonctionprimal <+\infty} }0,
        \sup_{\primal \in \na{0<\fonctionprimal <+\infty} }
        \bscal{\frac{\primal}{\fonctionprimal\np{\primal}}}{\cdot},
        \SupportFunction{\RR_{+}\MidLevelCurve{\fonctionprimal}{0}}
        }
      \\
      &=
        \sup\Bp{
        \sup_{\primal \in \na{0<\fonctionprimal} } 0,
        \sup_{\primal \in \na{0<\fonctionprimal <+\infty} }
        \bscal{\frac{\primal}{\fonctionprimal\np{\primal}}}{\cdot},
        \SupportFunction{\RR_{+}\MidLevelCurve{\fonctionprimal}{0}}
        }
        \tag{as \( \na{0<\fonctionprimal} = \MidLevelCurve{\fonctionprimal}{+\infty}
        \cup \na{0<\fonctionprimal <+\infty} \)}
      \\
      &=
      \begin{cases}
        \sup\Bp{
        0,
        \sup_{\primal \in \na{0<\fonctionprimal <+\infty} }
        \bscal{\frac{\primal}{\fonctionprimal\np{\primal}}}{\cdot},
        \SupportFunction{\RR_{+}\MidLevelCurve{\fonctionprimal}{0}} }
        &  \text{ if } \na{0<\fonctionprimal}\neq\emptyset 
        \eqfinv
        \\
        = \SupportFunction{
        \underbrace{ \na{0} \cup 
        \bigcup_{\primal \in \na{0<\fonctionprimal <+\infty} }
        \na{\frac{\primal}{\fonctionprimal\np{\primal}}}
        \cup  \RR_{+}\MidLevelCurve{\fonctionprimal}{0} }_{\Primal_{\fonctionprimal}} }
        &
        \\
        & \quad
        \\
        \sup\Bp{
        -\infty,-\infty,    \SupportFunction{\RR_{+}\MidLevelCurve{\fonctionprimal}{0}}}
        &  \text{ if } \na{0<\fonctionprimal}=\emptyset
          \text{ (as \( \na{0<\fonctionprimal <+\infty} \subset \na{0<\fonctionprimal}=\emptyset \))}
        \\
        = \SupportFunction{ \underbrace{ \RR_{+}\MidLevelCurve{\fonctionprimal}{0}}_{\Primal_{\fonctionprimal}} }
        \eqfinv
        &
      \end{cases}
      \\
      &=
        \SupportFunction{\Primal_{\fonctionprimal}}
        \text{  where \( \Primal_{\fonctionprimal} \) is given
        by~\eqref{eq:Primal_fonctionprimal}.}
    \end{align*}
    Thus, we have shown that \( \Polarity{\fonctionprimal}
    = \SupportFunction{\Primal_{\fonctionprimal}} \), which is the left hand side equality 
    in~\eqref{eq:properties_circ-polarity_nonnegative_functions_support_function}. 
    The right hand side equality 
    in~\eqref{eq:properties_circ-polarity_nonnegative_functions_support_function}
    is a consequence of~\eqref{eq:closedconvexhull_Primal_fonctionprimal},
    that we are going to prove now.

    We have
    \begin{align*}
      \PolarSet{ \np{\closedconvexhull{\Primal_{\fonctionprimal}} } }
      &=
        \PolarSet{ \Primal_{\fonctionprimal}}
        \tag{by definition~\eqref{eq:(negative)polar_set} of a (negative) or (one-sided) polar set}
      \\
      &=
      \PolarSet{ \Bp{
      \na{0} \cup 
      \RR_{+}\MidLevelCurve{\fonctionprimal}{0} \cup
      \bigcup_{\primal \in \na{0<\fonctionprimal <+\infty} }
      \na{\frac{\primal}{\fonctionprimal\np{\primal}}} } }
      \tag{by definition~\eqref{eq:Primal_fonctionprimal} of the set~$\Primal_{\fonctionprimal}$}
      \\
      &=
        \PolarSet{ \na{0} } \cap 
        \PolarSet{\np{ \RR_{+}\MidLevelCurve{\fonctionprimal}{0} } } \cap
        \bigcap_{\primal \in \na{0<\fonctionprimal <+\infty} }
        \PolarSet{ \na{\frac{\primal}{\fonctionprimal\np{\primal}}} }
        \tag{by~\eqref{eq:polar_of_union}}
      \\
      &=
        \DUAL \cap 
        \PolarCone{\MidLevelCurve{\fonctionprimal}{0} } \cap
        \bigcap_{\primal \in \na{0<\fonctionprimal <+\infty} }
        \PolarSet{ \na{\frac{\primal}{\fonctionprimal\np{\primal}}} }
        \tag{by the polar cone definition~\eqref{eq:(negative)polar_cone}}
      \\
      &=
        \bigcap_{\primal \in \MidLevelCurve{\fonctionprimal}{0}}
        \defset{ \dual \in \DUAL }{ \nscal{\primal}{\dual} \leq 0}
        \cap
        \bigcap_{\primal \in \na{0<\fonctionprimal <+\infty} }
        \defset{ \dual \in \DUAL }{ \nscal{\frac{\primal}{\fonctionprimal\np{\primal}}}{\dual} \leq 1}
      \\ 
      &=
        \bigcap_{\primal \in \MidLevelCurve{\fonctionprimal}{0}}
        \defset{ \dual \in \DUAL }{ \nscal{\primal}{\dual} \leq \fonctionprimal\np{\primal}}
        \cap
        \bigcap_{\primal \in \na{0<\fonctionprimal <+\infty} }
        \defset{ \dual \in \DUAL }{ \nscal{\primal}{\dual} \leq \fonctionprimal\np{\primal}}
        \intertext{} 
      &=
        \bigcap_{\primal \in \na{\fonctionprimal <+\infty} }
        \defset{ \dual \in \DUAL }{ \nscal{\primal}{\dual} \leq \fonctionprimal\np{\primal}}
      \\ 
      &=
        \bigcap_{\primal \in \PRIMAL}
        \defset{ \dual \in \DUAL }{ \nscal{\primal}{\dual} \leq
        \fonctionprimal\np{\primal}}
      \\
      &=
        \defset{ \dual \in \DUAL }{ \nscal{\primal}{\dual} -
        \fonctionprimal\np{\primal} \leq 0 
        \eqsepv \forall \primal \in \PRIMAL} 
      \\
      &=
        \defset{ \dual \in \DUAL }{ \sup_{\primal \in \PRIMAL} \bp{ 
        \nscal{\primal}{\dual} - \fonctionprimal\np{\primal} } \leq 0 }
      \\
      &=
        \defset{ \dual \in \DUAL }{ \LFM{\fonctionprimal}\np{\dual} \leq 0 }
      \\
      &=
        \MidLevelSet{\LFM{\fonctionprimal}}{0}
        \eqfinp
    \end{align*}
    As the set \( \closedconvexhull{\Primal_{\fonctionprimal}} \) is closed convex and
    contains~$0,$ it is a \BipolarSet\ (by Item~\ref{it:bipolar_set_def_1} of
    Definition~\ref{de:bipolar_set_dual_pair})
    and we deduce, using the bipolar Theorem expressed
    in~\eqref{eq:biPolarSet}, that
    \begin{equation*}
      \closedconvexhull{\Primal_{\fonctionprimal}} 
      = \biPolarSet{ \np{\closedconvexhull{\Primal_{\fonctionprimal}} } }
      =\PolarSet{ \MidLevelSet{\LFM{\fonctionprimal}}{0} }
      \eqfinv
    \end{equation*}
    which exactly is~\eqref{eq:closedconvexhull_Primal_fonctionprimal}.
    As we have shown that \( \Polarity{\fonctionprimal}
    = \SupportFunction{\Primal_{\fonctionprimal}} \), the right hand side equality 
    in~\eqref{eq:properties_circ-polarity_nonnegative_functions_support_function}
    follows from \( \SupportFunction{\Primal_{\fonctionprimal}} =
    \SupportFunction{\closedconvexhull{\Primal_{\fonctionprimal}}} =
    \SupportFunction{\PolarSet{ \MidLevelSet{\LFM{\fonctionprimal}}{0}}} \).

    We also have that \( {\MidLevelSet{\LFM{\fonctionprimal}}{0}} \) is a
    \BipolarSet: indeed, it is closed convex as a level set of a closed convex
    function and \( 0\in {\MidLevelSet{\LFM{\fonctionprimal}}{0}} \)
    by~\eqref{eq:pr:properties_polarity_nonnegative_functions_preliminary_result},
    and we conclude with Item~\ref{it:bipolar_set_def_1} of
    Definition~\ref{de:bipolar_set_dual_pair}.
  
  \item By the just proven
    Item~\ref{it:properties_circ-polarity_nonnegative_functions_support_function},
    the polar transform~$\Polarity{\fonctionprimal}$ is the support
    function~\(\SupportFunction{\PolarSet{
        \MidLevelSet{\LFM{\fonctionprimal}}{0} }}\) of a nonempty set, hence it
    is convex, lsc, (strictly positively) $1$-homogeneous and takes the
    value~$0$ at the origin (\( \Polarity{\fonctionprimal}\np{0}=0 \)), as
    recalled in~\S\ref{Polar_of_a_set,_bipolar_set}.
    The effective domain of the support
    function~\(\SupportFunction{\PolarSet{
        \MidLevelSet{\LFM{\fonctionprimal}}{0}}} \) is
    $\RR_{++}\biPolarSet{ \MidLevelSet{\LFM{\fonctionprimal}}{0}}$, which is
    equal to $\RR_{++}{ \MidLevelSet{\LFM{\fonctionprimal}}{0}}$, using the fact
    that \( {\MidLevelSet{\LFM{\fonctionprimal}}{0}} \) is a \BipolarSet, as
    proved above in
    Item~\ref{it:properties_circ-polarity_nonnegative_functions_support_function}.

    
  \item Finally, we
    prove~\eqref{eq:duality_coupling_polar_transform_+}--\eqref{eq:duality_coupling_polar_transform}.
    For any \( \dual \in \DUAL \), we have that
    \begin{subequations}
      \begin{align*}
        \Polarity{\fonctionprimal}\np{\dual}
        &=
          \SFM{\fonctionprimal}{\Duality\np{\coupling}}\np{\dual}
          \tag{by definition~\eqref{eq:circ-polar_transform} of the polar
          transform~$\Polarity{\fonctionprimal}$
          and by~\eqref{eq:times-duality_definition}} 
        \\      
        &=
          \inf\defset{\alpha \in \barRR_{+} }{
          {\nscal{\primal}{\dual}}_+ \LowTimes \Converse{\alpha}
          \leq 
          \fonctionprimal\np{\primal} 
          \eqsepv \forall \primal \in \PRIMAL }
          \tag{by~\cite[Equation~(4.17)]{Martinez-Legaz-Singer:1994}}
        \\
        &=
          \inf\defset{\lambda \in \OpenIntervalOpen{0}{+\infty} }{
          \frac{1}{\lambda} {\nscal{\primal}{\dual}}_+ 
          \leq     \fonctionprimal\np{\primal} 
          \eqsepv \forall \primal \in \PRIMAL }
          \eqfinv 
          \tag{by~\cite[Equation~(4.17)]{Martinez-Legaz-Singer:1994}}
        \\
        &=
          \inf\defset{\lambda \in \OpenIntervalOpen{0}{+\infty} }{
          {\nscal{\primal}{\dual}}_+
          \leq \lambda\fonctionprimal\np{\primal} 
          \eqsepv \forall \primal \in \PRIMAL }
          \eqfinv 
        \\
        &=
          \inf\defset{\lambda \in \OpenIntervalOpen{0}{+\infty} }{
          \nscal{\primal}{\dual} \leq 
          \lambda\fonctionprimal\np{\primal} 
          \eqsepv \forall \primal \in \PRIMAL }
          \eqfinv 
      \end{align*}
    \end{subequations}
    because \(
    {\nscal{\primal}{\dual}}_+=\max\na{\nscal{\primal}{\dual},0}\)
    and \( \lambda\fonctionprimal\np{\primal} \geq 0 \),
    as \( \lambda \in \OpenIntervalOpen{0}{+\infty} \) and
    \( \fonctionprimal\np{\primal} \in \barRR_{+} \).

  \item
    We prove~\eqref{eq:bipolarity_nonnegative_functions_support_function} as follows:
    \begin{align*}
      \Polaritybi{\fonctionprimal}
      &=
        \PolarityReverse{\np{\SupportFunction{\PolarSet{\MidLevelSet{\LFM{\fonctionprimal}}{0}}}}}
        \tag{by definition~\eqref{eq:circ-polar_bitransform} of the bipolar
        transform and by~\eqref{eq:properties_circ-polarity_nonnegative_functions_support_function}}
      \\
      &=
        \SupportFunction{\biPolarSet{\MidLevelSet{\LFM{\fonctionprimal}}{0}}}
        \intertext{by~\eqref{eq:PolaritySupportFunctionDual} proven below
        (there is no circularity in the reasoning,
        as~\eqref{eq:PolaritySupportFunctionDual} is proven by only
        using~\eqref{eq:properties_circ-polarity_nonnegative_functions_support_function}
        established before)}
      &=
        \SupportFunction{\MidLevelSet{\LFM{\fonctionprimal}}{0}}
        \eqfinp
        \tag{since \(\MidLevelSet{\LFM{\fonctionprimal}}{0} \) is a bipolar set
        as seen in Item~\ref{it:properties_circ-polarity_nonnegative_functions_support_function}}
    \end{align*}
    The rest of the assertions in
    Item~\ref{it:properties_circ-bipolarity_nonnegative_functions_support_function}
    are proven in the same way than for
    Item~\ref{it:properties_circ-polarity_nonnegative_functions_convex_lsc}.

  \item
    Using Equation~\eqref{eq:pr:properties_polarity_nonnegative_functions_preliminary_result},
    we obtain that $0 \in \MidLevelSet{\LFM{\fonctionprimal}}{0}$. Thus, 
    $\MidLevelSet{\LFM{\fonctionprimal}}{0}\not=\emptyset$ and, by assumption, we also have  $0\in\dom\fonctionprimal$.
    Thus, using Proposition~\ref{prop:best_pos_hom_cvx_subset} postponed in Appendix~\ref{app:best_cvx_general},
    we obtain that the greatest lsc convex
    (strictly positively) $1$-homogeneous lower approximation
    of $\fonctionprimal$ is given by $\SupportFunction{\MidLevelSet{\LFM{\fonctionprimal}}{0}}$.
    As this function is also proper (as the support function of a nonempty set), we conclude that it is also the
    greatest lsc proper convex (strictly positively) $1$-homogeneous lower approximation
    of $\fonctionprimal$.
    Now, using Equation~\eqref{eq:bipolarity_nonnegative_functions_support_function}, we have that 
    \( \Polaritybi{\fonctionprimal} = \SupportFunction{\MidLevelSet{\LFM{\fonctionprimal}}{0}} \) and
    the conclusion follows for $\Polaritybi{\fonctionprimal}$.
  \end{enumerate}

  This ends the proof. 
\end{proof}

\subsubsection{Examples of polar transforms as support functions}
\label{Examples_of_polar_transforms_as_support_functions}

Using
Item~\ref{it:properties_circ-polarity_nonnegative_functions_support_function}
(Equation~\eqref{eq:properties_circ-polarity_nonnegative_functions_support_function})
and
Item~\ref{it:properties_circ-bipolarity_nonnegative_functions_support_function}
(Equation~\eqref{eq:bipolarity_nonnegative_functions_support_function}) in
Proposition~\ref{pr:properties_circ-polarity_nonnegative_functions}, we obtain
expressions of the polar transforms of nonnegative support functions, of
Minkowski functionals, of indicator functions and of generalized indicator
functions as support functions.  Equations~\eqref{eq:biPolaritySupportFunction}
can be deduced from \cite[Corollary~15.1.2, p.~129]{Rockafellar:1970}.

\begin{proposition}
  \label{pr:polar_transforms_of_nonnegative_functions_as_support_functions}
  \quad
  \begin{enumerate}
  \item
    \label{it:PolaritySupportFunction}
    Polar transform of a nonnegative support function as a support function.\\
    \begin{subequations}
      For any \BipolarSet s \( \Primal \subset \PRIMAL \) and $\Dual \subset \DUAL$, we have that 
      \begin{align}
        \Polarity{\SupportFunction{\Dual}}
        &=
          \SupportFunction{\PolarSet{\Dual}}
          \eqfinv
          \label{eq:PolaritySupportFunctionDual}
        \\
        \PolarityReverse{\SupportFunction{\Primal}}
        &=
          \SupportFunction{\PolarSet{\Primal}}
          \eqfinv
          \label{eq:PolaritySupportFunctionPrimal}
        \\
        \Polaritybi{\SupportFunction{\Dual}}
        &=
          \SupportFunction{\Dual}
          \eqfinp
          \label{eq:biPolaritySupportFunctionDual}
      \end{align}
      \label{eq:biPolaritySupportFunction}
    \end{subequations}
    
  \item
    Polar transform of a Minkowski functional as a support function.\\
    \begin{subequations}
      For any subsets \( \Primal \subset \PRIMAL \) and $\Dual \subset \DUAL$, we have that 
      \begin{align}
        \Polarity{\MinkowskiFunctional{\Primal}}
        &=
          \SupportFunction{\biPolarSet{\Primal}}
          \eqfinv
          \label{eq:PolarityMinkowskiFunctionalPrimal_as_a_support_function}
        \\
        \PolarityReverse{\MinkowskiFunctional{\Dual}}
        &=
          \SupportFunction{\biPolarSet{\Dual}}
          \eqfinv
          \label{eq:PolarityMinkowskiFunctionalDual_as_a_support_function}
        \\
        \Polaritybi{\MinkowskiFunctional{\Primal}}
        &=
          \SupportFunction{\PolarSet{\Primal}}
          \eqfinp
          \label{eq:biPolarityMinkowskiFunctionalPrimal_as_a_support_function}
      \end{align}
    \end{subequations}

  \item 
    Polar transform of an indicator function as a support function.\\
    For any subsets \( \Primal \subset \PRIMAL \) and $\Dual \subset \DUAL$, we have that 
    \begin{subequations}
      \begin{align}
        \Polarity{\Indicator{\Primal}}
        &=
          \SupportFunction{\biPolarCone{\Primal}}=
          \Indicator{\PolarCone{\Primal}}
          \eqfinv 
          \label{eq:Polar_transform_of_a_indicator_function_as_a_support_function}
        \\
        \PolarityReverse{\Indicator{\Dual}}
        &=
          \SupportFunction{\biPolarCone{\Dual}}=
          \Indicator{\PolarCone{\Dual}}
          \eqfinv 
          \label{eq:ReversePolar_transform_of_a_indicator_function_as_a_support_function}
        \\      
        \Polaritybi{\Indicator{\Primal}}
        &=
          \SupportFunction{\PolarCone{\Primal}}=
          \Indicator{\biPolarCone{\Primal}}
          \eqfinv 
          \label{eq:biPolar_transform_of_a_indicator_function_as_a_support_function}
      \end{align}
    \end{subequations}
    
  \item 
    Polar transform of a generalized indicator function as a support function.\\
    For any subsets \( \Primal \subset \PRIMAL \) and $\Dual \subset \DUAL$, we have that 
    \begin{subequations}
      \begin{align}
        \Polarity{\chi_{\Primal}}
        &=
          \SupportFunction{\biPolarSet{\Primal}}
          \eqfinv
          \label{eq:Polar_transform_of_a_generalized_indicator_function_as_a_support_function}
        \\
        \PolarityReverse{\chi_{\Dual}}
        &=
          \SupportFunction{\biPolarSet{\Dual}}
          \label{eq:ReversePolar_transform_of_a_generalized_indicator_function_as_a_support_function}
          \eqfinv
        \\
        \Polaritybi{\chi_{\Primal}}
        &=
          \SupportFunction{\PolarSet{\Primal}} 
          \eqfinp
          \label{eq:biPolar_transform_of_a_generalized_indicator_function_as_a_support_function}
      \end{align}
    \end{subequations}

  \end{enumerate}
\end{proposition}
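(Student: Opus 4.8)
The plan is to reduce every identity in the statement to a single computation for each of the four function families: the $0$-sublevel set $\MidLevelSet{\LFM{\fonctionprimal}}{0}$ of the Fenchel conjugate. Indeed, once that set is identified, Item~\ref{it:properties_circ-polarity_nonnegative_functions_support_function} (Equation~\eqref{eq:properties_circ-polarity_nonnegative_functions_support_function}) and Item~\ref{it:properties_circ-bipolarity_nonnegative_functions_support_function} (Equation~\eqref{eq:bipolarity_nonnegative_functions_support_function}) of Proposition~\ref{pr:properties_circ-polarity_nonnegative_functions} give at once
\[
  \Polarity{\fonctionprimal}=\SupportFunction{\PolarSet{\MidLevelSet{\LFM{\fonctionprimal}}{0}}}
  \eqsepv
  \Polaritybi{\fonctionprimal}=\SupportFunction{\MidLevelSet{\LFM{\fonctionprimal}}{0}}
  \eqfinv
\]
and the corresponding reverse identities follow by applying the same result with the roles of $\PRIMAL$ and $\DUAL$ exchanged. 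Beforehand one checks that the function at hand is nonnegative, so that Proposition~\ref{pr:properties_circ-polarity_nonnegative_functions} applies: this is automatic for $\MinkowskiFunctional{\Primal}$, $\Indicator{\Primal}$ and $\chi_{\Primal}$, which are all $\barRR_{+}$-valued, and for $\SupportFunction{\Dual}$ it is exactly where the hypothesis that $\Dual$ be a \BipolarSet\ is used: then $0\in\Dual$, hence $\SupportFunction{\Dual}\np{\primal}\geq\nscal{\primal}{0}=0$.

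For the support function of a \BipolarSet~$\Dual$, I would write $\SupportFunction{\Dual}=\LFM{\Indicator{\Dual}}$, so that $\LFM{\SupportFunction{\Dual}}=\LFMbi{\Indicator{\Dual}}=\Indicator{\closedconvexhull\Dual}=\Indicator{\Dual}$ since $\Dual$, being a \BipolarSet, is closed convex; hence $\MidLevelSet{\LFM{\SupportFunction{\Dual}}}{0}=\Dual$, which yields~\eqref{eq:PolaritySupportFunctionDual} and~\eqref{eq:biPolaritySupportFunctionDual} (for the latter, $\SupportFunction{\MidLevelSet{\LFM{\SupportFunction{\Dual}}}{0}}=\SupportFunction{\Dual}$), while~\eqref{eq:PolaritySupportFunctionPrimal} is the reverse form. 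For the generalized indicator function, $\chi_{\Primal}=\Indicator{\Primal}+1$ gives $\LFM{\chi_{\Primal}}=\SupportFunction{\Primal}-1$, hence $\MidLevelSet{\LFM{\chi_{\Primal}}}{0}=\defset{\dual\in\DUAL}{\SupportFunction{\Primal}\np{\dual}\leq 1}=\PolarSet{\Primal}$ by~\eqref{eq:(negative)polar_set}; therefore $\Polarity{\chi_{\Primal}}=\SupportFunction{\PolarSet{\PolarSet{\Primal}}}=\SupportFunction{\biPolarSet{\Primal}}$ and $\Polaritybi{\chi_{\Primal}}=\SupportFunction{\PolarSet{\Primal}}$, the reverse identity again by symmetry.

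For the indicator function, $\LFM{\Indicator{\Primal}}=\SupportFunction{\Primal}$, so $\MidLevelSet{\LFM{\Indicator{\Primal}}}{0}=\MidLevelSet{\SupportFunction{\Primal}}{0}=\PolarCone{\Primal}$ by~\eqref{eq:(negative)polar_cone}. To conclude I would record two elementary facts about a cone~$K$, applied here to the closed convex cone $K=\PolarCone{\Primal}$: first, $\PolarSet{K}=\PolarCone{K}$ (if $\nscal{k}{\dual}>0$ for some $k\in K$ then $\nscal{\lambda k}{\dual}\to+\infty$ with $\lambda k\in K$, while $\PolarCone{K}\subset\PolarSet{K}$ trivially); second, $\SupportFunction{K}=\Indicator{\PolarCone{K}}$, together with the standard triple-polar-cone identity $\PolarCone{\np{\biPolarCone{\Primal}}}=\PolarCone{\Primal}$. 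Combining these, $\Polarity{\Indicator{\Primal}}=\SupportFunction{\PolarSet{\PolarCone{\Primal}}}=\SupportFunction{\biPolarCone{\Primal}}=\Indicator{\PolarCone{\Primal}}$ and $\Polaritybi{\Indicator{\Primal}}=\SupportFunction{\PolarCone{\Primal}}=\Indicator{\biPolarCone{\Primal}}$, which are~\eqref{eq:Polar_transform_of_a_indicator_function_as_a_support_function} and~\eqref{eq:biPolar_transform_of_a_indicator_function_as_a_support_function}, with~\eqref{eq:ReversePolar_transform_of_a_indicator_function_as_a_support_function} the reverse form.

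The Minkowski functional is the only case where the sublevel set is not obtained by a one-line conjugation. Here I would prove directly that $\MidLevelSet{\LFM{\MinkowskiFunctional{\Primal}}}{0}=\defset{\dual\in\DUAL}{\nscal{\primal}{\dual}\leq\MinkowskiFunctional{\Primal}\np{\primal}\eqsepv\forall\primal\in\PRIMAL}=\PolarSet{\Primal}$: the inclusion ``$\subset$'' follows because $p\in 1\cdot\Primal$, so $\MinkowskiFunctional{\Primal}\np{p}\leq 1$ and thus $\nscal{p}{\dual}\leq 1$ for every $p\in\Primal$; the inclusion ``$\supset$'' follows because, for $\dual\in\PolarSet{\Primal}$ and any $\primal$ with $\MinkowskiFunctional{\Primal}\np{\primal}<+\infty$, one picks $\lambda_n\downarrow\MinkowskiFunctional{\Primal}\np{\primal}$ with $\primal/\lambda_n\in\Primal$, whence $\nscal{\primal}{\dual}\leq\lambda_n\to\MinkowskiFunctional{\Primal}\np{\primal}$. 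This gives $\Polarity{\MinkowskiFunctional{\Primal}}=\SupportFunction{\PolarSet{\PolarSet{\Primal}}}=\SupportFunction{\biPolarSet{\Primal}}$ and $\Polaritybi{\MinkowskiFunctional{\Primal}}=\SupportFunction{\PolarSet{\Primal}}$, with the reverse forms~\eqref{eq:PolarityMinkowskiFunctionalDual_as_a_support_function} and~\eqref{eq:ReversePolar_transform_of_a_generalized_indicator_function_as_a_support_function} following by symmetry. I expect the only point requiring genuine (though mild) care to be this last sublevel-set identity, specifically the edge cases $\MinkowskiFunctional{\Primal}\np{\primal}\in\na{0,+\infty}$ in the inf-limit argument; everything else is routine conjugate bookkeeping plus the two cone identities quoted above.
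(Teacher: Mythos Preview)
Your proposal is correct and follows the same overall strategy as the paper: for each of the four families, compute $\MidLevelSet{\LFM{\fonctionprimal}}{0}$ and then read off both the polar and bipolar as support functions via Proposition~\ref{pr:properties_circ-polarity_nonnegative_functions}. The computations for $\SupportFunction{\Dual}$, $\Indicator{\Primal}$ and $\chi_{\Primal}$ are essentially identical to the paper's.

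The one local difference is the Minkowski case. The paper obtains the full conjugate $\LFM{\MinkowskiFunctional{\Primal}}=\Indicator{\PolarSet{\Primal}}$ by writing $\MinkowskiFunctional{\Primal}=\inf_{\lambda>0}\bp{\lambda+\Indicator{\lambda\Primal}}$ and using the conjugate-of-an-infimum rule; you instead argue the $0$-sublevel set equals $\PolarSet{\Primal}$ directly by two inclusions. Your route is more elementary and avoids the inf-representation, at the price of the edge-case check you correctly flag (when $\MinkowskiFunctional{\Primal}\np{\primal}=0$ the sequence $\lambda_n\downarrow 0$ with $\primal/\lambda_n\in\Primal$ still exists by definition of the infimum, and $\MinkowskiFunctional{\Primal}\np{\primal}=+\infty$ is trivial). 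The paper's route buys the stronger intermediate identity~\eqref{eq:Polar_transform_of_Minkowski_functionals}, which it records and reuses elsewhere. For the bipolar identities the paper chains two polar computations (e.g.\ $\Polaritybi{\chi_{\Primal}}=\PolarityReverse{\SupportFunction{\biPolarSet{\Primal}}}$ then Item~1), whereas you invoke~\eqref{eq:bipolarity_nonnegative_functions_support_function} directly; both are valid and yours is marginally shorter.
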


\begin{proof}
  \quad
  \begin{enumerate}

  \item 
    As both \( \Primal \) and $\Dual $ are \BipolarSet s, they both contain~$0$
    (see Item~\ref{it:bipolar_set_def_1} of Definition~\ref{de:bipolar_set_dual_pair}), and thus both
    \( \SupportFunction{\Primal} \geq 0 \) 
    and \( \SupportFunction{\Dual} \geq 0 \).
    As the set~$\Dual $ is nonempty closed convex (see Item~\ref{it:bipolar_set_def_1} of Definition~\ref{de:bipolar_set_dual_pair}),
    the function~\( \Indicator{\Dual} \) is proper closed convex.
    As the Fenchel conjugacy \( \fonctiondual \mapsto \LFM{\fonctiondual} \)
induces a one-to-one correspondence
between the closed convex functions on~$\DUAL$ and themselves
(see \cite[Theorem~5]{Rockafellar:1974} recalled in~\S\ref{Dual_pair,_paired_vector_spaces}),
we get that \( \Indicator{\Dual}=\LFMrbi{\Indicator{\Dual}} =\LFM{\SupportFunction{\Dual}} \)
as the equality \( \LFMr{\Indicator{\Dual}} =\SupportFunction{\Dual} \)
follows from the very definition of the support fonction~\( \SupportFunction{\Dual} \).
Thus, we get that \( \MidLevelSet{\LFM{\SupportFunction{\Dual}}}{0}=
\MidLevelSet{\Indicator{\Dual}}{0}= \Dual \) and
    then,
    by~\eqref{eq:properties_circ-polarity_nonnegative_functions_support_function},
    we obtain~\eqref{eq:PolaritySupportFunctionDual}.

    Because the {reverse polar transform}~\eqref{eq:reverse_circ-polar_transform}
    acts like the polar transform~\eqref{eq:circ-polar_transform} on nonnegative
    functions, we obtain~\eqref{eq:PolaritySupportFunctionPrimal}
    in the same fashion.

    Finally, by definition~\eqref{eq:circ-polar_bitransform} of the bipolar
    transform, we apply~\eqref{eq:PolaritySupportFunctionDual}
    and then~\eqref{eq:PolaritySupportFunctionPrimal} with $\Primal=\PolarSet{\Dual}$,
    which is a \BipolarSet, and get
    \( \Polaritybi{\SupportFunction{\Dual}}
    =\PolarityReverse{\np{\Polarity{\SupportFunction{\Dual}}}}
    =\PolarityReverse{\np{\SupportFunction{\PolarSet{\Dual}}}}
    =\SupportFunction{\biPolarSet{\Dual}}
    =\SupportFunction{\Dual}
    \), since \( \biPolarSet{\Dual}= \Dual \) as $\Dual $ is a \BipolarSet.
    Thus, we have obtained~\eqref{eq:biPolaritySupportFunctionDual}.

  \item 
    The {Minkowski functional} in~\eqref{eq:MinkowskiFunctional}
    associated with the subset~\( \Primal \subset \PRIMAL \) can be written as
    \( \MinkowskiFunctional{\Primal}
    = \inf_{\lambda\in\RR_{++}} \bp{\lambda+\Indicator{\lambda\Primal}} \),
    from which we obtain the Fenchel conjugate
    \begin{align*}
      \LFM{\MinkowskiFunctional{\Primal}}
      &=
        \LFM{\bp{\inf_{\lambda\in\RR_{++}} \np{\lambda+\Indicator{\lambda\Primal}}}}
      \\
      &=
        \sup_{\lambda\in\RR_{++}} \bp{-\lambda+\LFM{\Indicator{\lambda\Primal}}}
        \tag{by property of conjugacies} 
      \\
      &=
        \sup_{\lambda\in\RR_{++}} \bp{\lambda \np{\SupportFunction{\Primal}-1}}
        \tag{as \(
        \LFM{\Indicator{\lambda\Primal}}=\SupportFunction{\lambda\Primal}
        =\lambda\SupportFunction{\Primal}\)}
      \\
      &=
        \begin{cases}
          0 & \textrm{ if } \SupportFunction{\Primal}\leq 1
          \\
          +\infty & \textrm{ if } \SupportFunction{\Primal}> 1
        \end{cases}
      \\
      &=
        \Indicator{\MidLevelSet{\SupportFunction{\Primal}}{1}}
      \\
      &=
        \Indicator{\PolarSet{\Primal}}
        \tag{by definition~\eqref{eq:(negative)polar_set} of $\PolarSet{\Primal}$}
        \eqfinp
    \end{align*}
    Thus, we get that 
    \begin{equation}
      \LFM{\MinkowskiFunctional{\Primal}}=
      \Indicator{\PolarSet{\Primal}}
      \eqsepv
      \text{ and }
      \MidLevelSet{\LFM{\MinkowskiFunctional{\Primal}}}{0}
      = \PolarSet{\Primal}
      \eqfinv
      \label{eq:Polar_transform_of_Minkowski_functionals}
    \end{equation}
    and then, 
    using~\eqref{eq:properties_circ-polarity_nonnegative_functions_support_function},
    we obtain~\eqref{eq:PolarityMinkowskiFunctionalPrimal_as_a_support_function} by
    \(    \Polarity{\MinkowskiFunctional{\Primal}}
    = \SupportFunction{\PolarSet{\MidLevelSet{\LFM{\MinkowskiFunctional{\Primal}}}{0}}}
    = \SupportFunction{\biPolarSet{\Primal}} \). 

    Because the {reverse polar transform}~\eqref{eq:reverse_circ-polar_transform}
    acts like the polar transform~\eqref{eq:circ-polar_transform} on nonnegative functions,
    we obtain~\eqref{eq:PolarityMinkowskiFunctionalDual_as_a_support_function} in the same fashion.

    Finally, by definition~\eqref{eq:circ-polar_bitransform} of the bipolar
    transform, we apply~\eqref{eq:PolarityMinkowskiFunctionalPrimal_as_a_support_function}
    and then~\eqref{eq:PolaritySupportFunctionPrimal} to the support function of
    the bipolar set $\biPolarSet{\Primal}$ and get
    \[ \Polaritybi{\MinkowskiFunctional{\Primal}}
      =\PolarityReverse{\np{\Polarity{\MinkowskiFunctional{\Primal}}}}
      =\PolarityReverse{\np{\SupportFunction{\biPolarSet{\Primal}}}}
      =\SupportFunction{\triPolarSet{\Primal}}
      =\SupportFunction{\PolarSet{\Primal}}
      \eqfinv
    \]
    since \( \triPolarSet{\Primal}= \PolarSet{\Primal} \).
    Thus, we have obtained~\eqref{eq:biPolarityMinkowskiFunctionalPrimal_as_a_support_function}.

  \item
    We have that
    \( \LFM{\Indicator{\Primal}} = \SupportFunction{\Primal} \), and hence 
    \( \MidLevelSet{\LFM{\Indicator{\Primal}}}{0}
    =  \MidLevelSet{\SupportFunction{\Primal}}{0} 
    = \PolarCone{\Primal} \)
    by~\eqref{eq:(negative)polar_cone}. 
    By~\eqref{eq:properties_circ-polarity_nonnegative_functions_support_function},
    we get that
    \( \Polarity{\Indicator{\Primal}} = 
    \SupportFunction{\PolarSet{\MidLevelSet{\LFM{\Indicator{\Primal}}}{0}}}=
    \SupportFunction{\PolarSet{\np{\PolarCone{\Primal}}}} \),
    where \( \PolarSet{\np{\PolarCone{\Primal}}}=
    \PolarCone{\np{\PolarCone{\Primal}}} \) because
    \( \PolarCone{\Primal} \) is a cone.
    By~\eqref{eq:(negative)bipolar_cone}, we get that
    \( \Polarity{\Indicator{\Primal}} =
    \SupportFunction{\biPolarCone{\Primal}} \).
    Finally, as \( \biPolarCone{\Primal} \) is cone, we have that
    \( \SupportFunction{\biPolarCone{\Primal}}
    = \Indicator{\triPolarCone{\Primal}}
    = \Indicator{\PolarCone{\Primal}} \) since
    \( \triPolarCone{\Primal} = \PolarCone{\Primal} \).
    We have proven~\eqref{eq:Polar_transform_of_a_indicator_function_as_a_support_function}.

    Because the {reverse polar
      transform}~\eqref{eq:reverse_circ-polar_transform} acts like the polar
    transform~\eqref{eq:circ-polar_transform} on nonnegative functions, we
    obtain~\eqref{eq:ReversePolar_transform_of_a_indicator_function_as_a_support_function}
    in the same fashion.

    Finally,
    from~\eqref{eq:Polar_transform_of_a_indicator_function_as_a_support_function}
    and~\eqref{eq:ReversePolar_transform_of_a_indicator_function_as_a_support_function},
    we deduce
    \( \Polaritybi{\Indicator{\Primal}} =
    \PolarityReverse{\np{\Polarity{\Indicator{\Primal}}}} =
    \PolarityReverse{\Indicator{\PolarCone{\Primal}}} =
    \Indicator{\biPolarCone{\Primal}} \), where
    \( \Indicator{\biPolarCone{\Primal}} =
    \SupportFunction{\PolarCone{\Primal}}\) because \( \PolarCone{\Primal} \) is
    a cone.
        
  \item 
    As $\chi_{\Primal}=\Indicator{\Primal}+1$, we have that
    \(
    \LFM{\chi_{\Primal}}=\LFM{\np{\Indicator{\Primal}+1}}=\LFM{\Indicator{\Primal}}-1=
    \SupportFunction{\Primal}-1 \), and hence that
    
    \( \MidLevelSet{\LFM{\chi_{\Primal}}}{0}
    =  \MidLevelSet{\SupportFunction{\Primal}}{1}
    = \PolarSet{\Primal} \)
    by~\eqref{eq:(negative)polar_set}.
    By~\eqref{eq:properties_circ-polarity_nonnegative_functions_support_function},
    we
    deduce~\eqref{eq:Polar_transform_of_a_generalized_indicator_function_as_a_support_function}
    by \(     \Polarity{\chi_{\Primal}}
    =\SupportFunction{\PolarSet{\MidLevelSet{\LFM{\chi_{\Primal}}}{0}}}
    =\SupportFunction{\biPolarSet{\Primal}} \).

    Because the {reverse polar
      transform}~\eqref{eq:reverse_circ-polar_transform} acts like the polar
    transform~\eqref{eq:circ-polar_transform} on nonnegative functions, we
    obtain~\eqref{eq:ReversePolar_transform_of_a_generalized_indicator_function_as_a_support_function}
    in the same fashion.

    Finally,
    from~\eqref{eq:Polar_transform_of_a_generalized_indicator_function_as_a_support_function}
    and~\eqref{eq:PolaritySupportFunctionPrimal}, we deduce
    \( \Polaritybi{\chi_{\Primal}} =\PolarityReverse{\np{\Polarity{\chi_{\Primal}}}}
    =\PolarityReverse{\SupportFunction{\biPolarSet{\Primal}}} =
    \SupportFunction{\triPolarSet{\Primal}} =
    \SupportFunction{\PolarSet{\Primal}} \) since
    \( \triPolarSet{\Primal} = \PolarSet{\Primal} \).
  \end{enumerate}
\end{proof}


When \( \Dual \) is a unit ball, \( \SupportFunction{\Dual} \) is a
norm~$\norm{\cdot}$
and \( \Polarity{\SupportFunction{\Dual}}=\SupportFunction{\PolarSet{\Dual}}=\norm{\cdot}_{\star} \) is the
so-called \emph{dual norm}.

\subsection{Polar operation on functions}
\label{Polar_operation_on_functions}

In~\S\ref{Definition_of_the_polar_of_a_function},
we propose an extension of the polar transform from nonnegative to any functions.
Then, in~\S\ref{Polar_transform_as_a_Minkowski_functional}, we express the polar
transform of any function as a Minkowski functional, and we provide several
results about the polar of functions, some well-known (and scattered in the
literature) and some new.  Finally,
in~\S\ref{Examples_of_polar_transforms_as_Minkowski_functionals}, we provide
examples of polar transforms expressed as Minkowski functionals.

\subsubsection{Definition of the polar of a function}
\label{Definition_of_the_polar_of_a_function}

The equality~\eqref{eq:duality_coupling_polar_transform}
is taken as the definition of the polar transform of a gauge
in~\cite[p.~128]{Rockafellar:1970}.  In fact, we can use the
formula~\eqref{eq:duality_coupling_polar_transform} to extend
Definition~\ref{de:circ-polar_transform} to all functions, and not necessarily
nonnegative ones.
  
\begin{definition}
  \label{de:circ-polar_transform_any_function}
  For any function~$\fonctionprimal \colon \PRIMAL \to \barRR$,
  we define the \emph{polar transform}
  \( \Polarity{\fonctionprimal} \colon \DUAL \to \barRR_{+} \)
  by 
  \begin{subequations}
    \begin{equation}
      \Polarity{\fonctionprimal}\np{\dual}
      =
      \inf\defset{\lambda \in \OpenIntervalOpen{0}{+\infty} }{ \nscal{\primal}{\dual} \leq 
        \lambda\fonctionprimal\np{\primal} 
        \eqsepv \forall \primal \in \PRIMAL }
      \eqsepv \forall \dual\in\DUAL
      \eqfinp
      \label{eq:circ-polar_transform_any_function}
    \end{equation}
    For any function~$\fonctiondual \colon \DUAL \to \barRR$,
    the \emph{reverse polar transform}
    \( \PolarityReverse{\fonctiondual} \colon \PRIMAL \to \barRR_{+} \)
    of the function~$\fonctiondual$ is defined by
    \begin{equation}
      \PolarityReverse{\fonctiondual}\np{\primal}
      =
      \inf\defset{\lambda \in \OpenIntervalOpen{0}{+\infty} }{ \nscal{\primal}{\dual} \leq 
        \lambda\fonctiondual\np{\dual} 
        \eqsepv \forall \dual \in \DUAL }
      \eqsepv \forall \primal \in \PRIMAL
      \eqfinp
      \label{eq:reverse_circ-polar_transform_any_function}
    \end{equation}
    For any function~$\fonctionprimal \colon \PRIMAL \to \barRR$,
    we define the \emph{bipolar transform}
    \( \Polaritybi{\fonctionprimal} \colon \PRIMAL \to \barRR_{+} \)
    of the function~$\fonctionprimal$ by 
    \begin{equation}
      \Polaritybi{\fonctionprimal}
      =
      \PolarityReverse{\np{\Polarity{\fonctionprimal}}} 
      \eqfinp
      \label{eq:circ-polar_bitransform_any_function}
    \end{equation}
  \end{subequations}
  By the formula~\eqref{eq:duality_coupling_polar_transform},
  which coincides with~\eqref{eq:circ-polar_transform_any_function},
  the three definitions are consistent with those in Definition~\ref{de:circ-polar_transform}
  when $\fonctionprimal \colon \PRIMAL \to \barRR_{+}$.
\end{definition}


\subsubsection{Polar transform as a Minkowski functional}
\label{Polar_transform_as_a_Minkowski_functional}

We display systematic relationships of polar functions with Minkowski
functionals.  To our knowledge, the results in
Proposition~\ref{pr:polar_bipolar_Minkowski_functional} are new, if only because
they hold for any function, in contrast to
\cite[Theorem~15.1]{Rockafellar:1970},
\cite[Proposition~2.1]{Friedlander-Macedo-KeiPong:2014},
\cite[Theorem~4.1]{Aravkin-Burke-Drusvyatskiy-Friedlander-MacPhee:2018})
established for functions that are convex, or vanishing at zero, or (strictly
positively) $1$-homogeneous, or nonnegative.

\begin{proposition}
  \label{pr:polar_bipolar_Minkowski_functional}
  For any function~$\fonctionprimal \colon \PRIMAL\to\barRR$, 
  we have the following properties.
  \begin{enumerate}
  \item
    \label{it:polar_function_as_a_Minkowski_functional}
    The function~\( \Polarity{\fonctionprimal} \colon \DUAL\to\barRR_{+} \)
    is the Minkowski functional~\( \MinkowskiFunctional{\MidLevelSet{\LFM{\fonctionprimal}}{0}} \)
    of the closed convex subset 
    \( \MidLevelSet{\LFM{\fonctionprimal}}{0} \):
    \begin{equation}
      \Polarity{\fonctionprimal}=
      \MinkowskiFunctional{\MidLevelSet{\LFM{\fonctionprimal}}{0}}
      \eqfinp
      \label{eq:polar=Minkowski_functional}
    \end{equation}
    As a consequence, the polar
    transform~\( \Polarity{\fonctionprimal} \colon \DUAL\to\barRR_{+} \) is convex
    (strictly positively) $1$-homogeneous\footnote{Note that here, by
      contrast with
      Item~\ref{it:properties_circ-polarity_nonnegative_functions_convex_lsc} in
      Proposition~\ref{pr:properties_circ-polarity_nonnegative_functions}, the
      function $\Polarity{\fonctionprimal}$ may not be lsc. As an example,
      consider the function $\fonctionprimal=\SupportFunction{\na{1}}$
      on $\PRIMAL=\RR$. Using
      Equation~\eqref{eq:PolaritySupportFunctionDual_as_a_Minkowski_functional},
      we obtain that
      $\Polarity{\fonctionprimal}= \Polarity{\SupportFunction{\na{1}}} =
      \MinkowskiFunctional{\closedconvexhull\na{1}}
      =\MinkowskiFunctional{\na{1}}$.
      Now, \(\MinkowskiFunctional{\na{1}}(\primal) \) is equal to~$+\infty$ for \( \primal \leq 0\) and
      to~$\primal$ 
      for \( \primal > 0\). As a consequence, the function
      \(\MinkowskiFunctional{\na{1}} \) is not lsc at~$0$.}, with effective domain
    \( \domain\Polarity{\fonctionprimal} = \RR_{++}
    \MidLevelSet{\LFM{\fonctionprimal}}{0} \).  
  \item
     If \( \fonctionprimal\np{0}=0 \), the (Rockafellar-Moreau) subdifferential
     satisfies
     \begin{subequations}
    \begin{equation}
      \subdifferential{}{\fonctionprimal}\np{0} =
      \MidLevelCurve{\LFM{\fonctionprimal}}{0}=
      \MidLevelSet{\LFM{\fonctionprimal}}{0}
      \eqfinp
    \end{equation}
    As a consequence, we have that
    \begin{equation}
      \Polarity{\fonctionprimal}
      =\MinkowskiFunctional{\MidLevelSet{\LFM{\fonctionprimal}}{0}}
      =\MinkowskiFunctional{\MidLevelCurve{\LFM{\fonctionprimal}}{0}}
      =\MinkowskiFunctional{\subdifferential{}{\fonctionprimal}\np{0}}
      \eqfinp
      \label{eq:polar=Minkowski_functional_when_f(0)=0}
    \end{equation}       
     \end{subequations}
   \item
     \label{it:bipolar_function_as_a_Minkowski_functional}
     The function~\( \Polaritybi{\fonctionprimal} \colon \DUAL\to\barRR_{+} \)
     is the Minkowski functional~\(
     \MinkowskiFunctional{\PolarSet{\MidLevelSet{\LFM{\fonctionprimal}}{0}}} \) 
    of the \BipolarSet~\( \PolarSet{\MidLevelSet{\LFM{\fonctionprimal}}{0}} \):
    \begin{equation}
      \Polaritybi{\fonctionprimal} =
      \MinkowskiFunctional{\PolarSet{\MidLevelSet{\LFM{\fonctionprimal}}{0}}}
      \eqfinp
      \label{eq:bipolarasminkowski}
    \end{equation}
    As a consequence,
    the bipolar transform~\( \Polaritybi{\fonctionprimal} \colon \PRIMAL \to
    \barRR_{+} \)
    is convex lsc (strictly positively) $1$-homogeneous
    and vanishes at the origin
    (\( \biPolarity{\fonctionprimal}\np{0}=0 \)) ---  that is,
    the function~\( \Polaritybi{\fonctionprimal} \) is a lsc gauge.
  \end{enumerate}
\end{proposition}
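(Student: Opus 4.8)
The plan is to reduce everything to the scaling behaviour of the Fenchel conjugate together with the properties of the Minkowski functional collected in Proposition~\ref{pr:MinkowskiFunctional}. For Item~\ref{it:polar_function_as_a_Minkowski_functional}, the heart of the matter is the elementary chain of equivalences, valid for every $\dual\in\DUAL$ and every $\lambda\in\OpenIntervalOpen{0}{+\infty}$,
\[
\bp{\nscal{\primal}{\dual}\leq\lambda\fonctionprimal\np{\primal}\ \text{ for all }\primal\in\PRIMAL}
\iff
\LFM{\fonctionprimal}\np{\dual/\lambda}\leq 0
\iff
\dual\in\lambda\,\MidLevelSet{\LFM{\fonctionprimal}}{0}
\eqfinp
\]
Here the first equivalence comes from rewriting the pointwise inequality as $\sup_{\primal\in\PRIMAL}\bp{\nscal{\primal}{\dual}-\lambda\fonctionprimal\np{\primal}}\leq 0$, using the conjugate-scaling identity $\sup_{\primal\in\PRIMAL}\bp{\nscal{\primal}{\dual}-\lambda\fonctionprimal\np{\primal}}=\lambda\,\LFM{\fonctionprimal}\np{\dual/\lambda}$ — immediate from~\eqref{eq:Fenchel_conjugate} and bilinearity by factoring out $\lambda>0$, and valid even when $\LFM{\fonctionprimal}$ takes a value in $\na{-\infty,+\infty}$ — and then dividing by $\lambda>0$; the second is just the definition of $\lambda\MidLevelSet{\LFM{\fonctionprimal}}{0}$. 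Substituting this equivalence into the definition~\eqref{eq:circ-polar_transform_any_function} of $\Polarity{\fonctionprimal}$ and comparing with the definition~\eqref{eq:MinkowskiFunctional} of the Minkowski functional (both carrying the convention $\inf\emptyset=+\infty$) yields $\Polarity{\fonctionprimal}=\MinkowskiFunctional{\MidLevelSet{\LFM{\fonctionprimal}}{0}}$, that is,~\eqref{eq:polar=Minkowski_functional}. Since $\MidLevelSet{\LFM{\fonctionprimal}}{0}$ is a level set of the closed convex function $\LFM{\fonctionprimal}$, it is closed convex, and the advertised properties of $\Polarity{\fonctionprimal}$ follow from Proposition~\ref{pr:MinkowskiFunctional}: nonnegativity and $1$-homogeneity from Item~\ref{it:Minkowski_is_a_Functionalpositively_1-homogeneous_function}, convexity from~\eqref{eq:MinkowskiFunctional_is_convex}, and the formula for its effective domain from~\eqref{eq:MinkowskiFunctional_effective_domain}.

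For Item~2, if $\fonctionprimal\np{0}=0$ then $\LFM{\fonctionprimal}\np{\dual}\geq\nscal{0}{\dual}-\fonctionprimal\np{0}=0$ for every $\dual\in\DUAL$, so that $\MidLevelSet{\LFM{\fonctionprimal}}{0}=\MidLevelCurve{\LFM{\fonctionprimal}}{0}$; moreover $0\in\dom\fonctionprimal$, and the defining relation~\eqref{eq:Rockafellar-Moreau-subdifferential_a} taken at $\primal=0$ reads $\subdifferential{}{\fonctionprimal}\np{0}=\defset{\dual\in\DUAL}{\LFM{\fonctionprimal}\np{\dual}=0}=\MidLevelCurve{\LFM{\fonctionprimal}}{0}$. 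Chaining these three equalities and inserting them into~\eqref{eq:polar=Minkowski_functional} gives~\eqref{eq:polar=Minkowski_functional_when_f(0)=0}.

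For Item~\ref{it:bipolar_function_as_a_Minkowski_functional}, I would use that $\Polarity{\fonctionprimal}$ is a nonnegative function, being a Minkowski functional by Item~\ref{it:polar_function_as_a_Minkowski_functional}, together with the fact that the reverse polar transform and the reverse Fenchel conjugate obey, mutatis mutandis, the statements already established. Concretely: by~\eqref{eq:circ-polar_bitransform_any_function} we have $\Polaritybi{\fonctionprimal}=\PolarityReverse{\np{\Polarity{\fonctionprimal}}}$; applying the reverse analogue of~\eqref{eq:polar=Minkowski_functional} to the function $\Polarity{\fonctionprimal}$ on $\DUAL$ gives $\Polaritybi{\fonctionprimal}=\MinkowskiFunctional{\MidLevelSet{\LFMr{\np{\Polarity{\fonctionprimal}}}}{0}}$; and since $\Polarity{\fonctionprimal}=\MinkowskiFunctional{\MidLevelSet{\LFM{\fonctionprimal}}{0}}$, the reverse analogue of~\eqref{eq:Polar_transform_of_Minkowski_functionals} (which gives $\MidLevelSet{\LFMr{\MinkowskiFunctional{\Dual}}}{0}=\PolarSet{\Dual}$ for $\Dual\subset\DUAL$) identifies $\MidLevelSet{\LFMr{\np{\Polarity{\fonctionprimal}}}}{0}=\PolarSet{\MidLevelSet{\LFM{\fonctionprimal}}{0}}$, which is exactly~\eqref{eq:bipolarasminkowski}. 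This set is the polar set of a set, hence a \BipolarSet\ by Definition~\ref{de:bipolar_set_dual_pair}; and for the remaining properties the cleanest route is to observe that $\Polaritybi{\fonctionprimal}=\PolarityReverse{\np{\Polarity{\fonctionprimal}}}$ is the reverse polar transform of the nonnegative function $\Polarity{\fonctionprimal}$, so that, since the reverse polar transform acts on nonnegative functions exactly as the polar transform does, the reverse analogue of Proposition~\ref{pr:properties_circ-polarity_nonnegative_functions} (Items~\ref{it:properties_circ-polarity_nonnegative_functions_support_function} and~\ref{it:properties_circ-polarity_nonnegative_functions_convex_lsc}) shows $\Polaritybi{\fonctionprimal}$ is a support function, hence convex lsc $1$-homogeneous and vanishing at the origin, i.e.\ a lsc gauge.

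The step I expect to be the main obstacle is the bookkeeping in the key equivalence of Item~\ref{it:polar_function_as_a_Minkowski_functional}: one must check that the conjugate-scaling identity and the passage from ``$\nscal{\primal}{\dual}\leq\lambda\fonctionprimal\np{\primal}$ for all $\primal$'' to ``$\dual\in\lambda\MidLevelSet{\LFM{\fonctionprimal}}{0}$'' remain valid in all the degenerate cases --- $\fonctionprimal\equiv+\infty$ (so $\LFM{\fonctionprimal}\equiv-\infty$ and $\MidLevelSet{\LFM{\fonctionprimal}}{0}=\DUAL$), $\fonctionprimal$ taking the value $-\infty$ somewhere (so $\LFM{\fonctionprimal}\equiv+\infty$, $\MidLevelSet{\LFM{\fonctionprimal}}{0}=\emptyset$ and $\Polarity{\fonctionprimal}\equiv+\infty$), and $\Polarity{\fonctionprimal}\np{\dual}$ equal to $0$ or to $+\infty$ --- all of which are handled uniformly once one keeps the conventions $\inf\emptyset=+\infty$ and $\lambda\cdot\np{\pm\infty}=\pm\infty$ for $\lambda>0$. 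Everything else is a direct appeal to results already in the paper.
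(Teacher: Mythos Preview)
Your proof is correct and follows essentially the same approach as the paper's. For Item~\ref{it:polar_function_as_a_Minkowski_functional} both you and the paper derive~\eqref{eq:polar=Minkowski_functional} from the same chain of equivalences (rewriting the pointwise inequality as $\LFM{\fonctionprimal}\np{\dual/\lambda}\leq 0$ via division by $\lambda>0$), and for Item~\ref{it:bipolar_function_as_a_Minkowski_functional} both arguments apply the reverse analogue of Item~\ref{it:polar_function_as_a_Minkowski_functional} to $\Polarity{\fonctionprimal}=\MinkowskiFunctional{\MidLevelSet{\LFM{\fonctionprimal}}{0}}$ together with the computation $\MidLevelSet{\LFMr{\MinkowskiFunctional{\Dual}}}{0}=\PolarSet{\Dual}$ (the paper packages this last step as~\eqref{eq:PolarityMinkowskiFunctionalDual_as_a_Minkowski_functional}, noting there is no circularity). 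Your explicit handling of the degenerate cases and your justification of the lsc-gauge properties of $\Polaritybi{\fonctionprimal}$ via the reverse analogue of Proposition~\ref{pr:properties_circ-polarity_nonnegative_functions} are, if anything, slightly more detailed than the paper's own proof.
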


\begin{proof}
  We consider a function~$\fonctionprimal \colon \PRIMAL \to \barRR$.
  \begin{enumerate}

  \item
    Let \( \dual\in\DUAL \). We have that 
    \begin{subequations}
      \begin{align*}
        \Polarity{\fonctionprimal}\np{\dual}
        &=
          \inf\defset{\lambda \in \OpenIntervalOpen{0}{+\infty} }{ \nscal{\primal}{\dual} \leq 
          \lambda\fonctionprimal\np{\primal} 
          \eqsepv \forall \primal \in \PRIMAL }
          \eqfinv 
          \intertext{by expression~\eqref{eq:circ-polar_transform_any_function} of the
          $\circ$-polar transform~$\Polarity{\fonctionprimal}$} 
          &=
          \inf\defset{\lambda \in \OpenIntervalOpen{0}{+\infty} }{ \nscal{\primal}{\dual}
            + 
            \lambda\bp{ -\fonctionprimal\np{\primal} } \leq 0
          \eqsepv \forall \primal \in \PRIMAL }
          \eqfinv 
        \\
        &=
          \inf\defset{\lambda \in \OpenIntervalOpen{0}{+\infty} }{ \sup_{\primal \in \PRIMAL}
          \Bp{ \nscal{\primal}{\frac{\dual}{\lambda}}
             -\fonctionprimal\np{\primal}  } \leq 0 }
          \eqfinv 
        \\
        &=
          \inf\defset{\lambda \in \OpenIntervalOpen{0}{+\infty} }{ \LFM{\fonctionprimal}\np{\frac{\dual}{\lambda}}
          \leq 0 }
          \eqfinv 
          \tag{by definition~\eqref{eq:Fenchel_conjugate} of the Fenchel conjugate~$ \LFM{\fonctionprimal}$} 
        \\
        &=
          \MinkowskiFunctional{\MidLevelSet{\LFM{\fonctionprimal}}{0}}\np{\dual}
          \eqfinp 
          \tag{by definition~\eqref{eq:MinkowskiFunctional} of the Minkowski functional}
      \end{align*}
    \end{subequations}
    Thus, we have proven~\eqref{eq:polar=Minkowski_functional}.
    As \( \MidLevelSet{\LFM{\fonctionprimal}}{0} \) is a closed convex subset,
    the function~\( \Polarity{\fonctionprimal} \colon \RR^{\spacedim} \to\barRR_{+} \)
    is a nonnegative (strictly positively) $1$-homogeneous convex function,
    by Item~\ref{it:positively_1-homogeneous_function_is_a_MinkowskiFunctional} in
    Proposition~\ref{pr:MinkowskiFunctional} (nonnegative (strictly positively) $1$-homogeneous),
    and by~\eqref{eq:MinkowskiFunctional_is_convex} (convex).
    The effective domain \( \domain\Polarity{\fonctionprimal}
    = \RR_{++} \MidLevelSet{\LFM{\fonctionprimal}}{0} \)
    by~\eqref{eq:MinkowskiFunctional_effective_domain}.



  \item 
    If \( \fonctionprimal\np{0}=0 \), then \( 0 \in  \dom\fonctionprimal \)
    and the  (Rockafellar-Moreau) subdifferential
    \( \subdifferential{}{\fonctionprimal}\np{0} \) in~\eqref{eq:Rockafellar-Moreau-subdifferential_a}
    can be expressed either as
    \begin{equation}
      \subdifferential{}{\fonctionprimal}\np{0}   
      =    \defset{ \dual\in\DUAL }{ %
        \LFM{\fonctionprimal}\np{\dual} 
        = \nscal{0}{\dual} -\fonctionprimal\np{0} }
      =    \defset{ \dual\in\DUAL }{ \LFM{\fonctionprimal}\np{\dual} 
        = 0 }
      = \MidLevelCurve{\LFM{\fonctionprimal}}{0}
      \eqfinv 
    \end{equation}
    or as (using the property that \( \LFM{\fonctionprimal}\np{\dual} 
    \geq \nscal{0}{\dual} -\fonctionprimal\np{0}=0 \) by definition~\eqref{eq:Fenchel_conjugate})
    \begin{equation}
      \subdifferential{}{\fonctionprimal}\np{0}   
      =    \defset{ \dual\in\DUAL }{ %
        \LFM{\fonctionprimal}\np{\dual} 
        \leq \nscal{0}{\dual} -\fonctionprimal\np{0} }
      =    \defset{ \dual\in\DUAL }{ \LFM{\fonctionprimal}\np{\dual} 
        \leq 0 }
      = \MidLevelSet{\LFM{\fonctionprimal}}{0}
      \eqfinp 
    \end{equation}

%


  \item
    This is a simple application of
    Item~\ref{it:polar_function_as_a_Minkowski_functional}.
    Indeed, we have that
    \begin{align*}
      \Polaritybi{\fonctionprimal}
      &=
        \PolarityReverse{\np{\Polarity{\fonctionprimal}}}
        \tag{by definition~\eqref{eq:circ-polar_bitransform_any_function} of the {bipolar transform}}
      \\
      &=
        \PolarityReverse{\np{\MinkowskiFunctional{\MidLevelSet{\LFM{\fonctionprimal}}{0}}}}
        \tag{by~\eqref{eq:polar=Minkowski_functional} in Item~\ref{it:polar_function_as_a_Minkowski_functional}.}
      \\
      &=
        \MinkowskiFunctional{\PolarSet{\MidLevelSet{\LFM{\fonctionprimal}}{0}}}
        \eqfinv
    \end{align*}
    by the expression~\eqref{eq:PolarityMinkowskiFunctionalDual_as_a_Minkowski_functional}
    of
    \(\PolarityReverse{\MinkowskiFunctional{\Dual}}=\MinkowskiFunctional{\PolarSet{\Dual}}\)
    (there is no circularity in the reasoning,
    as~\eqref{eq:PolarityMinkowskiFunctionalDual_as_a_Minkowski_functional} is proven by only
    using~\eqref{eq:polar=Minkowski_functional} 
    established before). 

    Thus, we have proven~\eqref{eq:bipolarasminkowski}.
    We also have that  \( 0\in \PolarSet{\MidLevelSet{\LFM{\fonctionprimal}}{0}} \), by
    definition~\eqref{eq:(negative)polar_set} of~\(
    \PolarSet{\MidLevelSet{\LFM{\fonctionprimal}}{0}} \),
    hence that
    \( \Polaritybi{\fonctionprimal}\np{0}=\MinkowskiFunctional{\PolarSet{\MidLevelSet{\LFM{\fonctionprimal}}{0}}}\np{0}
    = \inf\bset{\lambda> 0}{0\in\lambda\PolarSet{\MidLevelSet{\LFM{\fonctionprimal}}{0}}}
    = \inf\RR_{++}  =0 \).
  \end{enumerate}
  This ends the proof. 
\end{proof}

\subsubsection{Examples of polar transforms as Minkowski functionals}
\label{Examples_of_polar_transforms_as_Minkowski_functionals}

Using Item~\ref{it:polar_function_as_a_Minkowski_functional}
(Equation~\eqref{eq:polar=Minkowski_functional}) and
Item~\ref{it:bipolar_function_as_a_Minkowski_functional}
(Equation~\eqref{eq:bipolarasminkowski}) in
Proposition~\ref{pr:polar_bipolar_Minkowski_functional}, we obtain expressions
of the polar transforms of Minkowski functionals, of support functions, of
indicator functions and of generalized indicator functions as Minkowski
functionals.
Equation~\eqref{eq:PolarityMinkowskiFunctionalPrimal_as_a_Minkowski_functional}
can be found in \cite[Theorem~15.1, p.~128]{Rockafellar:1970}.

\begin{proposition}
  \label{pr:examples_of_polar_transforms_of_nonnegative_functions_as_Minkowski_functionals}
  \quad
  \begin{enumerate}
  \item
    Polar transform of a Minkowski functional as a Minkowski functional.\\
    \begin{subequations}
      For any subsets \( \Primal \subset \PRIMAL \) and $\Dual \subset \DUAL$, we have that 
      \begin{align}
        \Polarity{\MinkowskiFunctional{\Primal}}
        &=
          \MinkowskiFunctional{\PolarSet{\Primal}}
          \eqfinv
          \label{eq:PolarityMinkowskiFunctionalPrimal_as_a_Minkowski_functional}
        \\
        \PolarityReverse{\MinkowskiFunctional{\Dual}}
        &=
          \MinkowskiFunctional{\PolarSet{\Dual}}
          \eqfinv
          \label{eq:PolarityMinkowskiFunctionalDual_as_a_Minkowski_functional}
        \\
        \Polaritybi{\MinkowskiFunctional{\Primal}}
        &=
          \MinkowskiFunctional{\biPolarSet{\Primal}}
          \eqfinp
          \label{eq:biPolarityMinkowskiFunctionalPrimal_as_a_Minkowski_functional}
      \end{align}
    \end{subequations}
    
  \item
    Polar transform of a support function\footnote{%
      To the difference of Item~\ref{it:PolaritySupportFunction}
      in Proposition~\ref{pr:polar_transforms_of_nonnegative_functions_as_support_functions},
      the support functions that we consider here are not supposed to be nonnegative.}
    as a Minkowski functional.\\
    \begin{subequations}
      For any subsets \( \Primal \subset \PRIMAL \) and $\Dual \subset \DUAL$, we have that 
      \begin{align}
        \Polarity{\SupportFunction{\Dual}}
        &=
          \MinkowskiFunctional{\closedconvexhull\Dual}
          \eqfinv
          \label{eq:PolaritySupportFunctionDual_as_a_Minkowski_functional}
        \\
        \PolarityReverse{\SupportFunction{\Primal}}
        &=
          \MinkowskiFunctional{\closedconvexhull\Primal}
          \eqfinv
          \label{eq:PolaritySupportFunctionPrimal_as_a_Minkowski_functional}
        \\
        \Polaritybi{\SupportFunction{\Dual}}
        &=
          \MinkowskiFunctional{\PolarSet{\Dual}}
          \eqfinp
          \label{eq:biPolaritySupportFunctionDual_as_a_Minkowski_functional}
      \end{align}
    \end{subequations}

  \item 
    Polar transform of an indicator function as a Minkowski functional.\\
    For any subset \( \Primal \subset \PRIMAL \), we have that
    \begin{subequations}
      \begin{align}
        \Polarity{\Indicator{\Primal}}
        &=
          \MinkowskiFunctional{\PolarCone{\Primal}}
          = \Indicator{\PolarCone{\Primal}}
          \eqfinv 
          \label{eq:Polar_transform_of_a_indicator_function_as_a_Minkowski_functional}
        \\      
        \PolarityReverse{\Indicator{\Dual}}
        &=
          \MinkowskiFunctional{\PolarCone{\Dual}}
          = \Indicator{\PolarCone{\Dual}}
          \eqfinv 
          \label{eq:ReversePolar_transform_of_a_indicator_function_as_a_Minkowski_functional}
        \\      
        \Polaritybi{\Indicator{\Primal}}
        &=
          \MinkowskiFunctional{\biPolarCone{\Primal}}
          = \Indicator{\biPolarCone{\Primal}}
          \eqfinv 
          \label{eq:biPolar_transform_of_a_indicator_function_as_a_Minkowski_functional}
      \end{align}
    \end{subequations}

  \item 
    Polar transform of a generalized indicator function as a Minkowski functional.\\
    For any subset \( \Primal \subset \PRIMAL \), we have that
    \begin{subequations}
      \begin{align}
        \Polarity{\chi_{\Primal}}
        &=
          \MinkowskiFunctional{\PolarSet{\Primal}}
          \eqfinv
          \label{eq:Polar_transform_of_a_generalized_indicator_function_as_a_Minkowski_functional}
        \\      
        \PolarityReverse{\chi_{\Dual}}
        &=
          \MinkowskiFunctional{\PolarSet{\Dual}}
          \eqfinv
          \label{eq:ReversePolar_transform_of_a_generalized_indicator_function_as_a_Minkowski_functional}
        \\      
        \Polaritybi{\chi_{\Primal}}
        &=
          \MinkowskiFunctional{\biPolarSet{\Primal}}
          \eqfinp
          \label{eq:biPolar_transform_of_a_generalized_indicator_function_as_a_Minkowski_functional}
      \end{align}
    \end{subequations}
  \end{enumerate}
\end{proposition}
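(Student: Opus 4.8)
The plan is to read off all twelve identities from the two master formulas of Proposition~\ref{pr:polar_bipolar_Minkowski_functional}, namely $\Polarity{\fonctionprimal}=\MinkowskiFunctional{\MidLevelSet{\LFM{\fonctionprimal}}{0}}$ from~\eqref{eq:polar=Minkowski_functional} and $\Polaritybi{\fonctionprimal}=\MinkowskiFunctional{\PolarSet{\MidLevelSet{\LFM{\fonctionprimal}}{0}}}$ from~\eqref{eq:bipolarasminkowski}, together with their reverse analogues --- valid because the reverse polar transform~\eqref{eq:reverse_circ-polar_transform_any_function} treats a function on~$\DUAL$ exactly as the polar transform treats one on~$\PRIMAL$, with $\LFMr{\cdot}$ in the role of $\LFM{\cdot}$. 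Since these formulas express the polar (resp. bipolar) transform of $\fonctionprimal$ as the Minkowski functional of $\MidLevelSet{\LFM{\fonctionprimal}}{0}$ (resp. of its polar set), the proof reduces to: (i) computing $\MidLevelSet{\LFM{\fonctionprimal}}{0}$ when $\fonctionprimal$ is a Minkowski functional, a support function, an indicator function, and a generalized indicator function; and (ii) simplifying the resulting Minkowski functionals by means of elementary facts about polar sets and polar cones and the elementary identity $\MinkowskiFunctional{\Cone}=\Indicator{\Cone}$ valid for any cone $\Cone$ (which holds because $\lambda\Cone=\Cone$ for every $\lambda\in\RR_{++}$).

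First I would gather the four Fenchel-conjugate computations. Three of them are already recorded in the proof of Proposition~\ref{pr:polar_transforms_of_nonnegative_functions_as_support_functions}: from~\eqref{eq:Polar_transform_of_Minkowski_functionals}, $\LFM{\MinkowskiFunctional{\Primal}}=\Indicator{\PolarSet{\Primal}}$, so $\MidLevelSet{\LFM{\MinkowskiFunctional{\Primal}}}{0}=\PolarSet{\Primal}$; since $\LFM{\Indicator{\Primal}}=\SupportFunction{\Primal}$, one gets $\MidLevelSet{\LFM{\Indicator{\Primal}}}{0}=\MidLevelSet{\SupportFunction{\Primal}}{0}=\PolarCone{\Primal}$ by~\eqref{eq:(negative)polar_cone}; and since $\chi_{\Primal}=\Indicator{\Primal}+1$ gives $\LFM{\chi_{\Primal}}=\SupportFunction{\Primal}-1$, one gets $\MidLevelSet{\LFM{\chi_{\Primal}}}{0}=\MidLevelSet{\SupportFunction{\Primal}}{1}=\PolarSet{\Primal}$ by~\eqref{eq:(negative)polar_set}. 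The one new computation is for a (possibly sign-indefinite) support function: using that the support function depends only on the closed convex hull of its defining set, write $\SupportFunction{\Dual}=\SupportFunction{\closedconvexhull\Dual}=\LFMr{\Indicator{\closedconvexhull\Dual}}$; for $\Dual\neq\emptyset$ the set $\closedconvexhull\Dual$ is nonempty closed convex, so $\Indicator{\closedconvexhull\Dual}$ is proper closed convex and equals its own biconjugate, whence $\LFM{\SupportFunction{\Dual}}=\LFM{\LFMr{\Indicator{\closedconvexhull\Dual}}}=\Indicator{\closedconvexhull\Dual}$ and $\MidLevelSet{\LFM{\SupportFunction{\Dual}}}{0}=\closedconvexhull\Dual$. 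The case $\Dual=\emptyset$ is consistent, since then $\SupportFunction{\emptyset}\equiv-\infty$, $\Polarity{(-\infty)}\equiv+\infty$ and $\MinkowskiFunctional{\emptyset}\equiv+\infty=\MinkowskiFunctional{\closedconvexhull\emptyset}$; the empty-set cases in the other items are equally harmless, the conventions being $\SupportFunction{\emptyset}\equiv-\infty$ and $\MinkowskiFunctional{\emptyset}\equiv+\infty$.

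Then I would assemble. Substituting each level set into~\eqref{eq:polar=Minkowski_functional} yields at once the first equality in each item: $\Polarity{\MinkowskiFunctional{\Primal}}=\MinkowskiFunctional{\PolarSet{\Primal}}$, $\Polarity{\SupportFunction{\Dual}}=\MinkowskiFunctional{\closedconvexhull\Dual}$, $\Polarity{\Indicator{\Primal}}=\MinkowskiFunctional{\PolarCone{\Primal}}$, and $\Polarity{\chi_{\Primal}}=\MinkowskiFunctional{\PolarSet{\Primal}}$; the reverse-transform versions follow identically from the reverse analogue of~\eqref{eq:polar=Minkowski_functional}. For the extra equalities in Item~3 I invoke that $\PolarCone{\Primal}$ is a cone, so $\MinkowskiFunctional{\PolarCone{\Primal}}=\Indicator{\PolarCone{\Primal}}$, and similarly for $\biPolarCone{\Primal}$. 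For the bipolar transforms, substituting into~\eqref{eq:bipolarasminkowski} gives $\Polaritybi{\MinkowskiFunctional{\Primal}}=\MinkowskiFunctional{\PolarSet{\np{\PolarSet{\Primal}}}}=\MinkowskiFunctional{\biPolarSet{\Primal}}$; $\Polaritybi{\SupportFunction{\Dual}}=\MinkowskiFunctional{\PolarSet{\np{\closedconvexhull\Dual}}}=\MinkowskiFunctional{\PolarSet{\Dual}}$ (a polar set depends only on the closed convex hull); $\Polaritybi{\chi_{\Primal}}=\MinkowskiFunctional{\PolarSet{\np{\PolarSet{\Primal}}}}=\MinkowskiFunctional{\biPolarSet{\Primal}}$; and $\Polaritybi{\Indicator{\Primal}}=\MinkowskiFunctional{\PolarSet{\np{\PolarCone{\Primal}}}}=\MinkowskiFunctional{\biPolarCone{\Primal}}=\Indicator{\biPolarCone{\Primal}}$, using that the polar set of a cone equals its polar cone and that $\biPolarCone{\Primal}$ is again a cone. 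One may equivalently obtain the bipolar identities by composing the already-proven $\Polarity{\cdot}$ and $\PolarityReverse{\cdot}$ formulas, exactly as in the proof of Proposition~\ref{pr:polar_transforms_of_nonnegative_functions_as_support_functions}; this is not circular, since the only identity of the present proposition invoked inside the proof of Proposition~\ref{pr:polar_bipolar_Minkowski_functional} is~\eqref{eq:PolarityMinkowskiFunctionalDual_as_a_Minkowski_functional}, whose own proof uses only~\eqref{eq:polar=Minkowski_functional}.

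The statement carries no genuine difficulty; it is a dictionary of corollaries of Proposition~\ref{pr:polar_bipolar_Minkowski_functional}. The only points needing care are the support-function case --- where one must pass to the closed convex hull before invoking the Fenchel biconjugation theorem (\cite[Theorem~5]{Rockafellar:1974}) and must dispatch $\Dual=\emptyset$ separately --- and the collapse of Minkowski functionals of cones to indicator functions in Item~3, which rests on the two elementary facts $\MinkowskiFunctional{\Cone}=\Indicator{\Cone}$ and $\PolarSet{\Cone}=\PolarCone{\Cone}$ for a cone $\Cone$. The most delicate thing to state correctly is the non-circularity remark linking this proposition to Proposition~\ref{pr:polar_bipolar_Minkowski_functional}.
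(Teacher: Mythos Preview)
Your proposal is correct and follows essentially the same approach as the paper's own proof: compute the $0$-level set of the Fenchel conjugate in each of the four cases, feed it into~\eqref{eq:polar=Minkowski_functional}, and obtain the bipolar identities either by composing the polar and reverse-polar formulas or directly via~\eqref{eq:bipolarasminkowski}. Your added care about the empty-set case for support functions and the explicit non-circularity remark are welcome but do not change the argument.
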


\begin{proof}
  \quad
  \begin{enumerate}
  \item 
    By~\eqref{eq:Polar_transform_of_Minkowski_functionals}, we know that
    \( \MidLevelSet{\LFM{\MinkowskiFunctional{\Primal}}}{0}
    = \PolarSet{\Primal} \).
    Then, 
    using~\eqref{eq:polar=Minkowski_functional}, 
    we obtain~\eqref{eq:PolarityMinkowskiFunctionalPrimal_as_a_Minkowski_functional}
    by \( \Polarity{\MinkowskiFunctional{\Primal}}
    = \MinkowskiFunctional{\MidLevelSet{\LFM{\MinkowskiFunctional{\Primal}}}{0}}
    = \MinkowskiFunctional{\PolarSet{\Primal}} \).

    Because the reverse polar
    transform~\eqref{eq:reverse_circ-polar_transform_any_function} acts like the
    polar transform~\eqref{eq:circ-polar_transform_any_function}, we
    obtain~\eqref{eq:PolarityMinkowskiFunctionalDual_as_a_Minkowski_functional} in
    the same fashion.

    Finally, by definition~\eqref{eq:circ-polar_bitransform_any_function} of the bipolar
    transform, we
    apply~\eqref{eq:PolarityMinkowskiFunctionalPrimal_as_a_Minkowski_functional} and
    then~\eqref{eq:PolarityMinkowskiFunctionalDual_as_a_Minkowski_functional} and
    get
    \( \Polaritybi{\MinkowskiFunctional{\Primal}}
    =\PolarityReverse{\np{\Polarity{\MinkowskiFunctional{\Primal}}}}
    =\PolarityReverse{\np{\MinkowskiFunctional{\PolarSet{\Primal}}}}
    =\MinkowskiFunctional{\biPolarSet{\Primal}} \).  Thus, we have
    obtained~\eqref{eq:biPolarityMinkowskiFunctionalPrimal_as_a_Minkowski_functional}.

  \item 
    We have that \( \LFM{\SupportFunction{\Dual}}=\Indicator{\closedconvexhull\Dual} \), from which
    we get that
    \( \MidLevelSet{\LFM{\SupportFunction{\Dual}}}{0}=\closedconvexhull\Dual \) and then, 
    by~\eqref{eq:polar=Minkowski_functional}, 
    we obtain~\eqref{eq:PolaritySupportFunctionDual_as_a_Minkowski_functional}
    by \(  \Polarity{\SupportFunction{\Dual}}
    = \MinkowskiFunctional{\MidLevelSet{\LFM{\SupportFunction{\Dual}}}{0}}
          = \MinkowskiFunctional{\closedconvexhull\Dual} \).

    Because the {reverse polar transform}~\eqref{eq:reverse_circ-polar_transform_any_function}
    acts like the $o$-polar transform~\eqref{eq:circ-polar_transform_any_function},
    we obtain~\eqref{eq:PolaritySupportFunctionPrimal_as_a_Minkowski_functional}
    in the same fashion.

    Finally, by definition~\eqref{eq:circ-polar_bitransform_any_function} of the bipolar
    transform, we apply~\eqref{eq:PolaritySupportFunctionDual_as_a_Minkowski_functional}
    and then~\eqref{eq:PolarityMinkowskiFunctionalDual_as_a_Minkowski_functional}
    and get
    \( \Polaritybi{\SupportFunction{\Dual}}
    =\PolarityReverse{\np{\Polarity{\SupportFunction{\Dual}}}}
    =\PolarityReverse{\MinkowskiFunctional{\closedconvexhull\Dual}}
    =\MinkowskiFunctional{\PolarSet{\np{\closedconvexhull\Dual}}}
    =\MinkowskiFunctional{\PolarSet{\Dual}}
    \), since \( \PolarSet{\np{\closedconvexhull\Dual}}= \PolarSet{\Dual} \).
    Thus, we have obtained~\eqref{eq:biPolaritySupportFunctionDual_as_a_Minkowski_functional}

  \item
    We have that
    \( \LFM{\Indicator{\Primal}} = \SupportFunction{\Primal} \), and hence 
    \( \MidLevelSet{\LFM{\Indicator{\Primal}}}{0}
    =  \MidLevelSet{\SupportFunction{\Primal}}{0} 
    = \PolarCone{\Primal} \)
    by~\eqref{eq:(negative)polar_cone}.
    By~\eqref{eq:polar=Minkowski_functional}, 
    we deduce that \(  \Polarity{\Indicator{\Primal}}
    =\MinkowskiFunctional{\MidLevelSet{\LFM{\Indicator{\Primal}}}{0}}
    =\MinkowskiFunctional{\PolarCone{\Primal}} \).
    Now, as \( \PolarCone{\Primal} \) is a cone, we easily see that 
    \(   \MinkowskiFunctional{\PolarCone{\Primal}}
    = \Indicator{\PolarCone{\Primal}} \)
    by definition~\eqref{eq:MinkowskiFunctional} of the Minkowski functional.
    Thus, we have
    proven~\eqref{eq:Polar_transform_of_a_indicator_function_as_a_Minkowski_functional}.

    Because the reverse polar
    transform~\eqref{eq:reverse_circ-polar_transform_any_function} acts like the
    polar transform~\eqref{eq:circ-polar_transform_any_function}, we
    obtain~\eqref{eq:ReversePolar_transform_of_a_indicator_function_as_a_Minkowski_functional}
    in the same fashion.

    Finally,
    using~\eqref{eq:Polar_transform_of_a_indicator_function_as_a_Minkowski_functional}
    and~\eqref{eq:ReversePolar_transform_of_a_indicator_function_as_a_Minkowski_functional},
    we
    get~\eqref{eq:biPolar_transform_of_a_indicator_function_as_a_Minkowski_functional}.
    
  \item
    As $\chi_{\Primal}=\Indicator{\Primal}+1$, we have that
    \(
    \LFM{\chi_{\Primal}}=\LFM{\np{\Indicator{\Primal}+1}}=\LFM{\Indicator{\Primal}}-1=
    \SupportFunction{\Primal}-1 \), and hence that

    \( \MidLevelSet{\LFM{\chi_{\Primal}}}{0}
    =  \MidLevelSet{\SupportFunction{\Primal}}{1}
    = \PolarSet{\Primal} \)
    by~\eqref{eq:(negative)polar_set}.
    By~\eqref{eq:polar=Minkowski_functional}, 
    we
    deduce~\eqref{eq:Polar_transform_of_a_generalized_indicator_function_as_a_Minkowski_functional}.

    Because the reverse polar transform~\eqref{eq:reverse_circ-polar_transform_any_function} acts like the
    polar transform~\eqref{eq:circ-polar_transform_any_function}, we
    obtain~\eqref{eq:ReversePolar_transform_of_a_generalized_indicator_function_as_a_Minkowski_functional}
    in the same fashion.
    
    Finally,
    using~\eqref{eq:Polar_transform_of_a_generalized_indicator_function_as_a_Minkowski_functional}
    and~\eqref{eq:PolarityMinkowskiFunctionalDual_as_a_Minkowski_functional}, we
    get~\eqref{eq:biPolar_transform_of_a_generalized_indicator_function_as_a_Minkowski_functional}.
  \end{enumerate}

  This ends the proof.
\end{proof}

Tables~\ref{tab:transforms} and~\ref{tab:bitransforms}
are consequences of
Propositions~\ref{pr:properties_circ-polarity_nonnegative_functions},
\ref{pr:polar_transforms_of_nonnegative_functions_as_support_functions},
\ref{pr:polar_bipolar_Minkowski_functional}
and
\ref{pr:examples_of_polar_transforms_of_nonnegative_functions_as_Minkowski_functionals}.

\begin{table}[hbtp]
  \centering
  \begin{tabular}{||c||c|c|c||}  
    \hline\hline 
    function
    & Fenchel conjugate 
    & $0$-level set of the 
    & $\circ$-polar transform 
    \\
    & & Fenchel conjugate & 
    \\
    \hline\hline
    $\fonctionprimal$
    & \( \LFM{\fonctionprimal} \)
    & \( \MidLevelSet{\LFM{\fonctionprimal}}{0} \)
    &  \( \Polarity{\fonctionprimal} =
      \MinkowskiFunctional{\MidLevelSet{\LFM{\fonctionprimal}}{0}} \)
    \\ &&& by~\eqref{eq:polar=Minkowski_functional}
    \\      
    \hline 
    $\SupportFunction{\Dual}$
    & $\Indicator{\closedconvexhull{\Dual}} $
    & $\closedconvexhull{\Dual}$
    & $\Polarity{\SupportFunction{\Dual}}
      =\MinkowskiFunctional{\closedconvexhull{\Dual}}$
    \\ &&& by~\eqref{eq:PolaritySupportFunctionDual_as_a_Minkowski_functional}
    \\
    \hline\hline
    $\fonctionprimal \geq 0 $
    & \( \LFM{\fonctionprimal} \)
    & \( \MidLevelSet{\LFM{\fonctionprimal}}{0} \ni 0 \)
    &  \( \Polarity{\fonctionprimal} =
      \MinkowskiFunctional{\MidLevelSet{\LFM{\fonctionprimal}}{0}} \)
      \( = \SupportFunction{\PolarSet{\MidLevelSet{\LFM{\fonctionprimal}}{0}}} \)
    \\ &&& by~\eqref{eq:polar=Minkowski_functional}
           and~\eqref{eq:properties_circ-polarity_nonnegative_functions_support_function}
    \\      
    \hline 
    $\SupportFunction{\Dual} \geq 0$
    & $\Indicator{\closedconvexhull{\Dual}} $
      & $\closedconvexhull{\Dual}$
    & $\Polarity{\SupportFunction{\Dual}}
      =\MinkowskiFunctional{\closedconvexhull{\Dual}}
      = \SupportFunction{\PolarSet{\Dual}} $
    \\ $0\in\closedconvexhull{\Dual}$
    &&&
        by~\eqref{eq:PolaritySupportFunctionDual_as_a_Minkowski_functional}
        and~\eqref{eq:PolaritySupportFunctionDual}
    \\
    \hline
    $\Indicator{\Primal}$
    & $\SupportFunction{\Primal}$
    & $\PolarCone{\Primal}$
    & $\Polarity{\Indicator{\Primal}}=
      \MinkowskiFunctional{\PolarCone{\Primal}} = 
      \SupportFunction{\biPolarCone{\Primal}}=
      \Indicator{\PolarCone{\Primal}}$
    \\ &&& by~\eqref{eq:Polar_transform_of_a_indicator_function_as_a_Minkowski_functional}
           and~\eqref{eq:Polar_transform_of_a_indicator_function_as_a_support_function}
    \\      
    \hline
    $\chi_{\Primal}$(=$\Indicator{\Primal}+1$)
    & $\SupportFunction{\Primal}-1$
    & $\PolarSet{\Primal}$
    & $ \Polarity{\chi_{\Primal}}=
      \MinkowskiFunctional{\PolarSet{\Primal}}
      = \SupportFunction{\biPolarSet{\Primal}} $
    \\ &&& by~\eqref{eq:Polar_transform_of_a_generalized_indicator_function_as_a_Minkowski_functional}
           and~\eqref{eq:Polar_transform_of_a_generalized_indicator_function_as_a_support_function}
    \\
    \hline 
    $\MinkowskiFunctional{\Primal}$
    & $\Indicator{\PolarSet{\Primal}} $
    & $\PolarSet{\Primal}$
    & $\Polarity{\MinkowskiFunctional{\Primal}}=
      \MinkowskiFunctional{\PolarSet{\Primal}}
      = \SupportFunction{\biPolarSet{\Primal}} $
    \\ &
         by~\eqref{eq:Polar_transform_of_Minkowski_functionals}
    &&
       by~\eqref{eq:PolarityMinkowskiFunctionalPrimal_as_a_Minkowski_functional}
       and by~\eqref{eq:PolarityMinkowskiFunctionalPrimal_as_a_support_function}
    \\
    \hline 
    $\MinkowskiFunctional{\PolarSet{\Dual}}$
    & $\Indicator{\biPolarSet{\Dual}} $
      & $\biPolarSet{\Dual}$
    & $\Polarity{\MinkowskiFunctional{\PolarSet{\Dual}}}=
      \MinkowskiFunctional{\biPolarSet{\Dual}}
      = \SupportFunction{\PolarSet{\Dual}} $
    \\ &
         by~\eqref{eq:Polar_transform_of_Minkowski_functionals}
    &&
       by~\eqref{eq:PolarityMinkowskiFunctionalPrimal_as_a_Minkowski_functional},
       \eqref{eq:PolarityMinkowskiFunctionalPrimal_as_a_support_function}
       and \( \triPolarSet{\Dual} = \PolarSet{\Dual} \)
    \\
   \hline\hline
  \end{tabular}
  \caption{Fenchel conjugates and polar transforms,
    for any function~$\fonctionprimal \colon \PRIMAL \to \barRR$,     
    any subset~$\Primal \subset \PRIMAL$ (in the primal space)
    and any subset~$\Dual \subset \DUAL$ (in the dual space)}
  \label{tab:transforms}
\end{table}

\begin{table}[hbtp]
  \centering
  \begin{tabular}{||c||c|c||}  
    \hline\hline 
    function
    & polar of $0$-level set 
    & $\circ$-polar bitransform 
    \\
    & of the Fenchel conjugate & 
    \\
    \hline\hline     
    $\fonctionprimal$
    & $\PolarSet{\MidLevelSet{\LFM{\fonctionprimal}}{0}}$ 
    &  \( \Polaritybi{\fonctionprimal} =
      \MinkowskiFunctional{\PolarSet{\MidLevelSet{\LFM{\fonctionprimal}}{0}}} \)
    \\ && by~\eqref{eq:bipolarasminkowski}
    \\      
    \hline 
    $\SupportFunction{\Dual}$
    & $\PolarSet{\Dual}$
    & $\Polaritybi{\SupportFunction{\Dual}}=
      \MinkowskiFunctional{\PolarSet{\Dual}}
      =\SupportFunction{\biPolarSet{\Dual}} $
    \\ && by~\eqref{eq:biPolaritySupportFunctionDual_as_a_Minkowski_functional}
          and~\eqref{eq:MinkowskiFunctional=SupportFunction_b} 
    \\
    \hline     \hline
    $\fonctionprimal \geq 0 $
    & $\PolarSet{\MidLevelSet{\LFM{\fonctionprimal}}{0}} \ni 0$ 
    &  \( \Polaritybi{\fonctionprimal} =
      \MinkowskiFunctional{\PolarSet{\MidLevelSet{\LFM{\fonctionprimal}}{0}}} \)
      \( = \SupportFunction{\MidLevelSet{\LFM{\fonctionprimal}}{0}} \)
    \\ && by~\eqref{eq:bipolarasminkowski}
          and~\eqref{eq:bipolarity_nonnegative_functions_support_function}
    \\
    \hline
    $\Indicator{\Primal}$
    & $\biPolarCone{\Primal}$
    & $ \Polaritybi{\Indicator{\Primal}}=
      \MinkowskiFunctional{\biPolarCone{\Primal}}
      = \SupportFunction{\PolarCone{\Primal}}
      =\Indicator{\biPolarCone{\Primal}} $
    \\ && by~\eqref{eq:biPolar_transform_of_a_indicator_function_as_a_Minkowski_functional}
          and~\eqref{eq:biPolar_transform_of_a_indicator_function_as_a_support_function}
    \\      
    \hline
    $\chi_{\Primal}$(=$\Indicator{\Primal}+1$) 
    & $\biPolarSet{\Primal}$
    & $\Polaritybi{\chi_{\Primal}}=
      \MinkowskiFunctional{\biPolarSet{\Primal}}$
      $=\SupportFunction{\PolarSet{\Primal}}$
    \\
    &&
       by~\eqref{eq:biPolar_transform_of_a_generalized_indicator_function_as_a_Minkowski_functional}
              and  by~\eqref{eq:biPolar_transform_of_a_generalized_indicator_function_as_a_support_function}
    \\
    \hline 
    $\MinkowskiFunctional{\Primal}$
    & $\biPolarSet{\Primal}$
    & $\Polaritybi{\MinkowskiFunctional{\Primal}}=
      \MinkowskiFunctional{\biPolarSet{\Primal}} $
      $=\SupportFunction{\PolarSet{\Primal}}$
    \\
    &&
       by~\eqref{eq:biPolarityMinkowskiFunctionalPrimal_as_a_Minkowski_functional}
       and by~\eqref{eq:biPolarityMinkowskiFunctionalPrimal_as_a_support_function}
    \\
    \hline 
    $\MinkowskiFunctional{\PolarSet{\Dual}}$
    & $\PolarSet{\Dual}$
    & $\Polaritybi{\MinkowskiFunctional{\PolarSet{\Dual}}}=
      \MinkowskiFunctional{\PolarSet{\Dual}} $
      $=\SupportFunction{\biPolarSet{\Dual}}$
    \\
    &&
       by~\eqref{eq:biPolarityMinkowskiFunctionalPrimal_as_a_Minkowski_functional},
       \( \triPolarSet{\Dual} = \PolarSet{\Dual} \)
       and~\eqref{eq:biPolarityMinkowskiFunctionalPrimal_as_a_support_function}
    \\
    \hline\hline
  \end{tabular}
  \caption{Fenchel biconjugates and bipolar transforms,
    for any function~$\fonctionprimal \colon \PRIMAL \to \barRR$,     
    any subset~$\Primal \subset \PRIMAL$ (in the primal space)
    and any subset~$\Dual \subset \DUAL$ (in the dual space)}
  \label{tab:bitransforms}
\end{table}

Tables~\ref{tab:transforms} and~\ref{tab:bitransforms} lead to the following results.
 \begin{proposition}
    For any subset~$\Primal \subset \PRIMAL$ (in the primal space)
 and any subset~$\Dual \subset \DUAL$ (in the dual space), we have that 
 \begin{subequations}
 \begin{equation}
   \MinkowskiFunctional{\PolarSet{\Primal}}
   = \SupportFunction{\biPolarSet{\Primal}}
   \quad\text{ and }\quad
    \MinkowskiFunctional{\biPolarSet{\Dual}}
    = \SupportFunction{\PolarSet{\Dual}}
    \eqfinv
      \label{eq:MinkowskiFunctional=SupportFunction_a}
  \end{equation}
 \begin{equation}
   \MinkowskiFunctional{\PolarSet{\Dual}}
   = \SupportFunction{\biPolarSet{\Dual}}
   \quad\text{ and }\quad
    \MinkowskiFunctional{\biPolarSet{\Primal}}
    = \SupportFunction{\PolarSet{\Primal}}
    \eqfinv
    \label{eq:MinkowskiFunctional=SupportFunction_b}
    \end{equation}
  which, in the case of a {polar pair} (see Definition~\ref{de:bipolar_set_dual_pair}),
  gives
  \begin{equation}
    \Primal, \Dual \mtext{ polar pair} \implies
   \MinkowskiFunctional{\Primal}
   = \SupportFunction{\Dual}
   \quad\text{ and }\quad
    \MinkowskiFunctional{\Dual}
    = \SupportFunction{\Primal}
    \eqfinp
    \label{eq:MinkowskiFunctional=SupportFunction_c}
  \end{equation}
  \label{eq:MinkowskiFunctional=SupportFunction}
\end{subequations}
For any nonnegative (strictly positively) $1$-homogeneous function
\( \fonctionprimal \colon \PRIMAL \to {\barRR}_{+} \),
we have that
\begin{subequations}
  \begin{align}
    \MidLevelSet{\LFM{\fonctionprimal}}{0}
    &=
      \PolarSet{\MidStrictLevelSet{\fonctionprimal}{1}}
      \eqfinv
      \label{eq:PolarSet_MidStrictLevelSet_positively_1-homogeneous_function}
            \intertext{and, when~\( \fonctionprimal\np{0}=0 \), we have that}
     \MidLevelSet{\LFM{\fonctionprimal}}{0}
    &=
      \PolarSet{\MidStrictLevelSet{\fonctionprimal}{1}}
     =
      \PolarSet{\MidLevelSet{\fonctionprimal}{1}}
      \eqfinp
            \label{eq:PolarSet_MidLevelSet_positively_1-homogeneous_function}
  \end{align}
\end{subequations}
\end{proposition}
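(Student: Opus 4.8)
The plan is to read everything off the polar and bipolar transforms of Minkowski functionals that have already been computed, the one point of care being that Tables~\ref{tab:transforms} and~\ref{tab:bitransforms} themselves invoke~\eqref{eq:MinkowskiFunctional=SupportFunction_b}, so one must argue directly from Propositions~\ref{pr:polar_transforms_of_nonnegative_functions_as_support_functions} and~\ref{pr:examples_of_polar_transforms_of_nonnegative_functions_as_Minkowski_functionals} (not from the tables) to avoid circularity.

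First I would establish~\eqref{eq:MinkowskiFunctional=SupportFunction_a} and~\eqref{eq:MinkowskiFunctional=SupportFunction_b}. For any subset $\Primal\subset\PRIMAL$, chaining the Minkowski-functional form~\eqref{eq:PolarityMinkowskiFunctionalPrimal_as_a_Minkowski_functional} of $\Polarity{\MinkowskiFunctional{\Primal}}$ with its support-function form~\eqref{eq:PolarityMinkowskiFunctionalPrimal_as_a_support_function} gives $\MinkowskiFunctional{\PolarSet{\Primal}}=\Polarity{\MinkowskiFunctional{\Primal}}=\SupportFunction{\biPolarSet{\Primal}}$, which is the first equality of~\eqref{eq:MinkowskiFunctional=SupportFunction_a}; likewise, chaining~\eqref{eq:biPolarityMinkowskiFunctionalPrimal_as_a_Minkowski_functional} with~\eqref{eq:biPolarityMinkowskiFunctionalPrimal_as_a_support_function} gives $\MinkowskiFunctional{\biPolarSet{\Primal}}=\Polaritybi{\MinkowskiFunctional{\Primal}}=\SupportFunction{\PolarSet{\Primal}}$, the second equality of~\eqref{eq:MinkowskiFunctional=SupportFunction_b}. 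Since the pairing $\nscal{\,}{}$, and hence the whole construction, is symmetric under $\PRIMAL\leftrightarrow\DUAL$ — concretely, the reverse polar transform acts on functions over $\DUAL$ exactly as the polar transform acts on functions over $\PRIMAL$, and~\eqref{eq:PolarityMinkowskiFunctionalDual_as_a_Minkowski_functional} and~\eqref{eq:PolarityMinkowskiFunctionalDual_as_a_support_function} are the dual counterparts — the same two identities hold with $\Dual\subset\DUAL$ in place of $\Primal$, supplying the remaining first equality of~\eqref{eq:MinkowskiFunctional=SupportFunction_b} and second equality of~\eqref{eq:MinkowskiFunctional=SupportFunction_a}. (Alternatively, apply the first equality of~\eqref{eq:MinkowskiFunctional=SupportFunction_a} to the primal subset $\PolarSet{\Dual}$ and use $\triPolarSet{\Dual}=\PolarSet{\Dual}$.)

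Next I would deduce~\eqref{eq:MinkowskiFunctional=SupportFunction_c}: if $(\Primal,\Dual)$ is a polar pair, then $\Dual=\PolarSet{\Primal}$, $\Primal=\PolarSet{\Dual}$, and both are bipolar sets (as noted just after Definition~\ref{de:bipolar_set_dual_pair}), so $\biPolarSet{\Primal}=\Primal$ and $\biPolarSet{\Dual}=\Dual$. Substituting into the first equality of~\eqref{eq:MinkowskiFunctional=SupportFunction_a} gives $\MinkowskiFunctional{\Dual}=\MinkowskiFunctional{\PolarSet{\Primal}}=\SupportFunction{\biPolarSet{\Primal}}=\SupportFunction{\Primal}$, and substituting into the first equality of~\eqref{eq:MinkowskiFunctional=SupportFunction_b} gives $\MinkowskiFunctional{\Primal}=\MinkowskiFunctional{\PolarSet{\Dual}}=\SupportFunction{\biPolarSet{\Dual}}=\SupportFunction{\Dual}$.

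Finally, for the two homogeneity identities I would invoke Item~\ref{it:positively_1-homogeneous_function_is_a_MinkowskiFunctional} of Proposition~\ref{pr:MinkowskiFunctional}: a nonnegative (strictly positively) $1$-homogeneous $\fonctionprimal$ equals $\MinkowskiFunctional{\MidStrictLevelSet{\fonctionprimal}{1}}$ by~\eqref{eq:positively_1-homogeneous_function_is_a_MinkowskiFunctional_strict}, so applying the identity $\MidLevelSet{\LFM{\MinkowskiFunctional{\Primal}}}{0}=\PolarSet{\Primal}$ from~\eqref{eq:Polar_transform_of_Minkowski_functionals} with $\Primal=\MidStrictLevelSet{\fonctionprimal}{1}$ yields~\eqref{eq:PolarSet_MidStrictLevelSet_positively_1-homogeneous_function}; and when $\fonctionprimal\np{0}=0$, also $\fonctionprimal=\MinkowskiFunctional{\MidLevelSet{\fonctionprimal}{1}}$ by~\eqref{eq:positively_1-homogeneous_function_is_a_MinkowskiFunctional_both}, and the same application with $\Primal=\MidLevelSet{\fonctionprimal}{1}$ gives $\MidLevelSet{\LFM{\fonctionprimal}}{0}=\PolarSet{\MidLevelSet{\fonctionprimal}{1}}$, which together with the previous equality completes~\eqref{eq:PolarSet_MidLevelSet_positively_1-homogeneous_function}. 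I do not expect a genuine obstacle here: the only delicate point is the bookkeeping needed to bypass the circular route through the tables and to make the $\PRIMAL\leftrightarrow\DUAL$ symmetry explicit when transporting the primal identities to the dual side.
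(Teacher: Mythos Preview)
Your argument for~\eqref{eq:MinkowskiFunctional=SupportFunction_a}--\eqref{eq:MinkowskiFunctional=SupportFunction_c} is essentially the paper's: the paper simply points to the last two lines of Table~\ref{tab:transforms}, which are themselves justified by exactly the pairs~\eqref{eq:PolarityMinkowskiFunctionalPrimal_as_a_Minkowski_functional}/\eqref{eq:PolarityMinkowskiFunctionalPrimal_as_a_support_function} and their $\PolarSet{\Dual}$ instances that you chain. (Your circularity caution is sound but, as it happens, unnecessary here: the table entries the paper invokes do not cite~\eqref{eq:MinkowskiFunctional=SupportFunction_b}; only an unrelated row of Table~\ref{tab:bitransforms} does.)

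For~\eqref{eq:PolarSet_MidStrictLevelSet_positively_1-homogeneous_function}--\eqref{eq:PolarSet_MidLevelSet_positively_1-homogeneous_function} your route is genuinely different and more direct. The paper argues indirectly: it computes $\Polaritybi{\fonctionprimal}$ two ways --- as $\SupportFunction{\MidLevelSet{\LFM{\fonctionprimal}}{0}}$ via~\eqref{eq:bipolarity_nonnegative_functions_support_function}, and as $\SupportFunction{\PolarSet{\MidStrictLevelSet{\fonctionprimal}{1}}}$ via $\fonctionprimal=\MinkowskiFunctional{\MidStrictLevelSet{\fonctionprimal}{1}}$ and~\eqref{eq:biPolarityMinkowskiFunctionalPrimal_as_a_support_function} --- and then concludes equality of the two closed convex sets from equality of their support functions. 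You instead write $\fonctionprimal=\MinkowskiFunctional{\Primal}$ and read off $\MidLevelSet{\LFM{\fonctionprimal}}{0}=\PolarSet{\Primal}$ straight from~\eqref{eq:Polar_transform_of_Minkowski_functionals}. Your approach is shorter and avoids the bipolar machinery and the support-function-determines-closed-convex-set step; the paper's approach has the minor conceptual benefit of illustrating how the bipolar tables encode these set identities, but at the cost of an extra layer of indirection.
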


\begin{proof}
  The last two lines
  of Table~\ref{tab:transforms}
  give~\eqref{eq:MinkowskiFunctional=SupportFunction},
  which are well-known results
  (see \cite[Theorem~14.5, p.~125]{Rockafellar:1970},
  \cite[Corollaries~15.1.1-2, p.~129]{Rockafellar:1970}). 

  On the one hand, by line~4 of Table~\ref{tab:bitransforms}, we have that 
  \( \Polaritybi{\fonctionprimal} =
  \SupportFunction{\MidLevelSet{\LFM{\fonctionprimal}}{0}} \).
  On the other hand, by~\eqref{eq:positively_1-homogeneous_function_is_a_MinkowskiFunctional_strict},
  we have that \( \fonctionprimal =
  \MinkowskiFunctional{\MidStrictLevelSet{\fonctionprimal}{1}} \).
  Then, by line~7 of Table~\ref{tab:bitransforms}
  with \( \Primal=\MidStrictLevelSet{\fonctionprimal}{1}\), we get that 
  \( \Polaritybi{\fonctionprimal} =
  \SupportFunction{\PolarSet{\MidStrictLevelSet{\fonctionprimal}{1}}} \).
  As both sets \( \MidLevelSet{\LFM{\fonctionprimal}}{0} \) and
  \( \PolarSet{\MidStrictLevelSet{\fonctionprimal}{1}} \) are closed convex, we
  obtain Equation~\eqref{eq:PolarSet_MidStrictLevelSet_positively_1-homogeneous_function}
  from the equality \( \Polaritybi{\fonctionprimal} =
  \SupportFunction{\MidLevelSet{\LFM{\fonctionprimal}}{0}} =
  \SupportFunction{\PolarSet{\MidStrictLevelSet{\fonctionprimal}{1}}} \).
  
  When \( \fonctionprimal\np{0}=0 \), by
  Equation~\eqref{eq:positively_1-homogeneous_function_is_a_MinkowskiFunctional_both}
  in Proposition~\ref{pr:MinkowskiFunctional}, we have that
  \( \fonctionprimal = \MinkowskiFunctional{\MidLevelSet{\fonctionprimal}{1}} \).
  Equation~\eqref{eq:PolarSet_MidLevelSet_positively_1-homogeneous_function}
  follows in the same way as above.  
\end{proof}

\section{Bipolar functions}
\label{Generalized_circ-convex_functions}

In this Sect.~\ref{Generalized_circ-convex_functions}, 
we consider $\PRIMAL$ and $\DUAL$ two (real) vector spaces that are {paired}
(see \S\ref{Dual_pair,_paired_vector_spaces}).

We present in a systematic fashion different expressions for the set of
bipolar functions defined as follows.
%
\begin{definition}
  We say that a function $\fonctionprimal \colon \PRIMAL \to \barRR$
  is a \emph{bipolar function} if
  \begin{equation}
    \Polaritybi{\fonctionprimal} = \fonctionprimal
    \eqfinp
    \label{eq:bipolar_function}
  \end{equation}
We denote by $\BIPOLARFUNC$ the set of bipolar functions from~$\PRIMAL$ to the extended reals. 
\label{de:bipolar_function}
\end{definition}

The following equivalences --- between
Item~\ref{it:Generalized_circ-convex_functions}
and all but the last Item~\ref{it:Corollary4.3_Martinez-Legaz-Singer:1994} ---
have been long established in the class of gauges (see \cite[Theorem~15.1, Corollary~15.1,
Corollary~15.2]{Rockafellar:1970}).
The equivalence between 
Item~\ref{it:Generalized_circ-convex_functions}
and Item~\ref{it:Generalized_circ-convex_functions_lsc_gauge}
has been established 
in~\cite[Theorem~5.2]{Martinez-Legaz-Singer:1994}
in the class of nonnegative functions.
The equivalence between 
Item~\ref{it:Generalized_circ-convex_functions}
and Item~\ref{it:Corollary4.3_Martinez-Legaz-Singer:1994}
has been established in \cite[Corollary~4.3]{Martinez-Legaz-Singer:1994},
for a large class of couplings which includes the coupling
\( \PRIMAL\times\DUAL \ni \np{\primal,\dual} \mapsto \coupling\np{\primal,\dual}={\nscal{\primal}{\dual}}_+ \).
Once again, the novelty of
Proposition~\ref{pr:Generalized_circ-convex_functions} is to 
provide equivalence in the class of all functions
(and not necessarily convex ones,
or lsc, or (strictly positively) $1$-homogeneous, or gauges, or even nonnegative),
and to display the equivalences in a unified fashion.
Item~\ref{it:bipolar_function=Minkowski=SupportFunction} is possibly new.

\begin{proposition}
  For any function $\fonctionprimal \colon \PRIMAL \to \barRR$,
  the following statements are equivalent.
  \begin{enumerate}
    
  \item
    The function~\( \fonctionprimal \) is a bipolar function, that is, 
    \( \Polaritybi{\fonctionprimal} = \fonctionprimal \).
    \label{it:Generalized_circ-convex_functions}
    
  \item
    \label{it:bipolar_function=Minkowski=SupportFunction}
    The function~\( \fonctionprimal \) is the Minkowski functional
    of the \BipolarSet~\( \PolarSet{\MidLevelSet{\LFM{\fonctionprimal}}{0}} \)
  and also the support function of the \BipolarSet~\( \MidLevelSet{\LFM{\fonctionprimal}}{0} \):
\begin{equation}
  {\fonctionprimal} =
      \MinkowskiFunctional{\PolarSet{\MidLevelSet{\LFM{\fonctionprimal}}{0}}}
      = \SupportFunction{\MidLevelSet{\LFM{\fonctionprimal}}{0}}
      \eqfinp
      \label{eq:bipolar_function=Minkowski=SupportFunction}
    \end{equation}
    
 \item
   \label{it:Generalized_circ-convex_functions_two_subsets}
    There exist a polar pair \( \Primal\subset\PRIMAL \),
    \( \Dual\subset\DUAL \) 
(that is, \( \Dual=\PolarSet{\Primal} \), \(  \Primal=\PolarSet{\Dual} \))    
    such that \( \fonctionprimal = \MinkowskiFunctional{\Primal}
    =\SupportFunction{\Dual} \).
    
  \item
    \label{it:Generalized_circ-convex_functions_MinkowskiFunctional}
  There exists a bipolar (primal) set~\( \Primal\subset\PRIMAL \) 
    such that the function~\( \fonctionprimal \)
    is the Minkowski functional~\( \MinkowskiFunctional{\Primal} \).
  
  \item
    There exists a bipolar (dual) set~\( \Dual\subset\DUAL \) 
    such that the function~\( \fonctionprimal \)
    is the support function~\( \fonctionprimal =\SupportFunction{\Dual} \).
    \label{it:Generalized_circ-convex_functions_support_function}

    \item
    \label{it:Generalized_circ-convex_functions_lsc_gauge}
   The function~\( \fonctionprimal \)
    is a nonnegative (strictly positively) $1$-homogeneous convex lsc function
    satisfying \( \fonctionprimal\np{0}=0 \), that is,
    the function~\( \fonctionprimal \) is a lsc gauge.

  \item
    \label{it:Corollary4.3_Martinez-Legaz-Singer:1994}
    The function~\( \fonctionprimal \)
    is the pointwise supremum of a family of functions of the form
    \( \lambda {\nscal{\cdot}{\dual}}_+ \), with \( \dual\in\DUAL \)
    and \( \lambda \in \RR_{++}
    \).
  \end{enumerate}
  \label{pr:Generalized_circ-convex_functions} 
\end{proposition}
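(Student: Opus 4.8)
The plan is to prove the equivalence of the first six items by the cyclic chain $\ref{it:Generalized_circ-convex_functions}\Rightarrow\ref{it:bipolar_function=Minkowski=SupportFunction}\Rightarrow\ref{it:Generalized_circ-convex_functions_two_subsets}\Rightarrow\ref{it:Generalized_circ-convex_functions_MinkowskiFunctional}\Rightarrow\ref{it:Generalized_circ-convex_functions_support_function}\Rightarrow\ref{it:Generalized_circ-convex_functions_lsc_gauge}\Rightarrow\ref{it:Generalized_circ-convex_functions}$, and then to fold in the last item by proving $\ref{it:Generalized_circ-convex_functions_support_function}\Leftrightarrow\ref{it:Corollary4.3_Martinez-Legaz-Singer:1994}$. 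Almost all the machinery is already available: Equation~\eqref{eq:bipolarasminkowski} expresses $\Polaritybi{\fonctionprimal}$ as a Minkowski functional, Equations~\eqref{eq:MinkowskiFunctional=SupportFunction_a}--\eqref{eq:MinkowskiFunctional=SupportFunction_b} convert Minkowski functionals of polar/bipolar sets into support functions and back, and Definition~\ref{de:bipolar_set_dual_pair} allows one to pass freely between ``bipolar set'', ``equal to its bipolar'', and ``polar of a set''.

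For $\ref{it:Generalized_circ-convex_functions}\Rightarrow\ref{it:bipolar_function=Minkowski=SupportFunction}$ I would start from $\fonctionprimal=\Polaritybi{\fonctionprimal}=\MinkowskiFunctional{\PolarSet{\MidLevelSet{\LFM{\fonctionprimal}}{0}}}$ by~\eqref{eq:bipolarasminkowski}; Item~\ref{it:bipolar_function_as_a_Minkowski_functional} of Proposition~\ref{pr:polar_bipolar_Minkowski_functional} then forces $\fonctionprimal$ to be a lsc gauge, so $\fonctionprimal\geq 0$, hence $0\in\MidLevelSet{\LFM{\fonctionprimal}}{0}$ by the preliminary equivalence~\eqref{eq:pr:properties_polarity_nonnegative_functions_preliminary_result}. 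As $\MidLevelSet{\LFM{\fonctionprimal}}{0}$ is a level set of the convex lsc conjugate it is closed convex, hence a bipolar set (Item~\ref{it:bipolar_set_def_1} of Definition~\ref{de:bipolar_set_dual_pair}), so $\biPolarSet{\MidLevelSet{\LFM{\fonctionprimal}}{0}}=\MidLevelSet{\LFM{\fonctionprimal}}{0}$, and the first equality of~\eqref{eq:MinkowskiFunctional=SupportFunction_b} applied to the dual set $\MidLevelSet{\LFM{\fonctionprimal}}{0}$ yields exactly~\eqref{eq:bipolar_function=Minkowski=SupportFunction}. The middle implications are bookkeeping: $\ref{it:bipolar_function=Minkowski=SupportFunction}\Rightarrow\ref{it:Generalized_circ-convex_functions_two_subsets}$ takes the pair $\np{\PolarSet{\MidLevelSet{\LFM{\fonctionprimal}}{0}},\MidLevelSet{\LFM{\fonctionprimal}}{0}}$, which is polar because the second set is bipolar; $\ref{it:Generalized_circ-convex_functions_two_subsets}\Rightarrow\ref{it:Generalized_circ-convex_functions_MinkowskiFunctional}$ uses that a member of a polar pair is a bipolar set; $\ref{it:Generalized_circ-convex_functions_MinkowskiFunctional}\Rightarrow\ref{it:Generalized_circ-convex_functions_support_function}$ rewrites $\MinkowskiFunctional{\Primal}=\MinkowskiFunctional{\biPolarSet{\Primal}}=\SupportFunction{\PolarSet{\Primal}}$ by the second equality of~\eqref{eq:MinkowskiFunctional=SupportFunction_b}, with $\PolarSet{\Primal}$ bipolar; and $\ref{it:Generalized_circ-convex_functions_support_function}\Rightarrow\ref{it:Generalized_circ-convex_functions_lsc_gauge}$ observes that the support function of a bipolar (hence nonempty, $0$-containing) set is convex, lsc, $1$-homogeneous, vanishes at $0$ (see~\S\ref{Background_on_Minkowski_functional}) and is $\geq\nscal{\cdot}{0}=0$.

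The step I expect to be the main obstacle is closing the cycle with $\ref{it:Generalized_circ-convex_functions_lsc_gauge}\Rightarrow\ref{it:Generalized_circ-convex_functions}$, since this is where the fixed-point identity $\Polaritybi{\fonctionprimal}=\fonctionprimal$ must be produced rather than merely propagated. For a lsc gauge we have $\fonctionprimal\np{0}=0$, hence $0\in\dom\fonctionprimal$, so Item~\ref{it:bipolarity_nonnegative_functions_greatest_minorant} of Proposition~\ref{pr:properties_circ-polarity_nonnegative_functions} applies: $\Polaritybi{\fonctionprimal}$ is the greatest proper convex lsc (strictly positively) $1$-homogeneous function dominated by $\fonctionprimal$. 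But $\fonctionprimal$ is itself proper (nonnegative, $0\in\dom\fonctionprimal$), convex, lsc, $1$-homogeneous and dominated by itself, so $\Polaritybi{\fonctionprimal}\geq\fonctionprimal$; together with $\Polaritybi{\fonctionprimal}\leq\fonctionprimal$ from~\eqref{eq:bipolar_is_smaller} (valid since $\fonctionprimal\geq 0$) this gives $\Polaritybi{\fonctionprimal}=\fonctionprimal$.

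Finally, for $\ref{it:Generalized_circ-convex_functions_support_function}\Leftrightarrow\ref{it:Corollary4.3_Martinez-Legaz-Singer:1994}$: if $\fonctionprimal=\SupportFunction{\Dual}$ with $\Dual$ bipolar, then $0\in\Dual$ forces $\SupportFunction{\Dual}\np{\primal}=\sup_{\dual\in\Dual}\nscal{\primal}{\dual}\geq 0$ for every $\primal$, whence $\SupportFunction{\Dual}=\sup_{\dual\in\Dual}{\nscal{\cdot}{\dual}}_{+}$, a pointwise supremum of functions $\lambda{\nscal{\cdot}{\dual}}_{+}$ with $\lambda=1$. Conversely, for a nonempty family $\sequence{\lambda_i{\nscal{\cdot}{\dual_i}}_{+}}{i\in I}$ I would use $\lambda_i{\nscal{\cdot}{\dual_i}}_{+}=\SupportFunction{\na{0,\lambda_i\dual_i}}$ to get, with $\Dual=\bigcup_{i\in I}\na{0,\lambda_i\dual_i}\ni 0$, that $\sup_{i\in I}\lambda_i{\nscal{\cdot}{\dual_i}}_{+}=\SupportFunction{\Dual}=\SupportFunction{\closedconvexhull\Dual}=\SupportFunction{\biPolarSet{\Dual}}$ (using $0\in\Dual$ in~\eqref{eq:biPolarSet}), and $\biPolarSet{\Dual}$ is a bipolar dual set, which is Item~\ref{it:Generalized_circ-convex_functions_support_function}. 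One should record that Item~\ref{it:Corollary4.3_Martinez-Legaz-Singer:1994} is read with a nonempty family, so that $\fonctionprimal\not\equiv-\infty$; the empty supremum $-\infty$ is indeed not a bipolar function, consistently with the rest.
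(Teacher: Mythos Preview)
Your proof is correct and the overall cycle through Items~\ref{it:Generalized_circ-convex_functions}--\ref{it:Generalized_circ-convex_functions_lsc_gauge} is essentially the paper's, but you diverge from it in two places worth noting. First, to close the loop the paper does not go $\ref{it:Generalized_circ-convex_functions_lsc_gauge}\Rightarrow\ref{it:Generalized_circ-convex_functions}$ directly; instead it proves $\ref{it:Generalized_circ-convex_functions_lsc_gauge}\Rightarrow\ref{it:Generalized_circ-convex_functions_MinkowskiFunctional}$ via~\eqref{eq:positively_1-homogeneous_function_is_a_MinkowskiFunctional_both} (a lsc gauge equals $\MinkowskiFunctional{\MidLevelSet{\fonctionprimal}{1}}$ with $\MidLevelSet{\fonctionprimal}{1}$ a bipolar set) and then $\ref{it:Generalized_circ-convex_functions_MinkowskiFunctional}\Rightarrow\ref{it:Generalized_circ-convex_functions}$ via~\eqref{eq:biPolarityMinkowskiFunctionalPrimal_as_a_Minkowski_functional}. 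Your route through the greatest-minorant characterization (Item~\ref{it:bipolarity_nonnegative_functions_greatest_minorant} of Proposition~\ref{pr:properties_circ-polarity_nonnegative_functions}) is a legitimate alternative and arguably more conceptual, though it leans on the heavier Appendix~\ref{app:best_cvx_general} machinery. Second, for Item~\ref{it:Corollary4.3_Martinez-Legaz-Singer:1994} the paper simply invokes \cite[Corollary~4.3]{Martinez-Legaz-Singer:1994} for the equivalence with Item~\ref{it:Generalized_circ-convex_functions}, whereas you give a direct, self-contained proof of $\ref{it:Generalized_circ-convex_functions_support_function}\Leftrightarrow\ref{it:Corollary4.3_Martinez-Legaz-Singer:1994}$ using $\lambda{\nscal{\cdot}{\dual}}_{+}=\SupportFunction{\na{0,\lambda\dual}}$; this buys independence from the external reference at negligible cost. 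Your remark about reading Item~\ref{it:Corollary4.3_Martinez-Legaz-Singer:1994} with a nonempty family is a fair clarification.
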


\begin{proof}
  
  \noindent$\bullet$
We prove that Item~\ref{it:Generalized_circ-convex_functions} implies
  Item~\ref{it:bipolar_function=Minkowski=SupportFunction}.

  Suppose that \( \Polaritybi{\fonctionprimal} = \fonctionprimal \). 
  By~\eqref{eq:circ-polar_bitransform_any_function}
  and~\eqref{eq:circ-polar_transform_any_function},
  we get that \( \Polaritybi{\fonctionprimal} \geq 0 \),
  hence \( \fonctionprimal \geq 0 \).
  By line~4 in Table~\ref{tab:bitransforms}
  (or by~\eqref{eq:bipolarasminkowski}
  and~\eqref{eq:bipolarity_nonnegative_functions_support_function}),
  we get~\eqref{eq:bipolar_function=Minkowski=SupportFunction}.
 By Item~\ref{it:properties_circ-polarity_nonnegative_functions_support_function}
 in Proposition~\ref{pr:properties_circ-polarity_nonnegative_functions}, we know that
 \( \MidLevelSet{\LFM{\fonctionprimal}}{0} \) is a \BipolarSet,
 hence so is~\( \PolarSet{\MidLevelSet{\LFM{\fonctionprimal}}{0}} \).
  \medskip

  \noindent$\bullet$
  It is obvious that Item~\ref{it:bipolar_function=Minkowski=SupportFunction} implies
  Item~\ref{it:Generalized_circ-convex_functions_two_subsets}, which implies
  Item~\ref{it:Generalized_circ-convex_functions_MinkowskiFunctional}
  and
  Item~\ref{it:Generalized_circ-convex_functions_support_function}
  (Item~\ref{it:Generalized_circ-convex_functions_MinkowskiFunctional}
  and
  Item~\ref{it:Generalized_circ-convex_functions_support_function}
  are equivalent, using~\eqref{eq:MinkowskiFunctional=SupportFunction_c}).
    \medskip

    \noindent$\bullet$
    Item~\ref{it:Generalized_circ-convex_functions_support_function} implies
    Item~\ref{it:Generalized_circ-convex_functions_lsc_gauge},
    because the support function of a subset containing~$0$ is
    a nonnegative (strictly positively)
  $1$-homogeneous convex lsc function satisfying \( \fonctionprimal\np{0}=0 \).

  \medskip
  
  \noindent$\bullet$
  We prove that Item~\ref{it:Generalized_circ-convex_functions_lsc_gauge}
  implies Item~\ref{it:Generalized_circ-convex_functions_MinkowskiFunctional}.

  Suppose that
  the function~\( \fonctionprimal \)
  is a nonnegative (strictly positively) $1$-homogeneous convex lsc function
  satisfying \( \fonctionprimal\np{0}=0 \).
  By~\eqref{eq:positively_1-homogeneous_function_is_a_MinkowskiFunctional_both}, we know that
  \( \fonctionprimal =
  \MinkowskiFunctional{\MidLevelSet{\fonctionprimal}{1}} \),
  where \( \MidLevelSet{\fonctionprimal}{1} \)
  is a closed convex subset containing zero, hence is a \BipolarSet,
  by Item~\ref{it:bipolar_set_def_1} of Definition~\ref{de:bipolar_set_dual_pair}.  
  \medskip
  

  \noindent$\bullet$
Item~\ref{it:Generalized_circ-convex_functions_MinkowskiFunctional}
 implies
 Item~\ref{it:Generalized_circ-convex_functions}, using~\eqref{eq:biPolarityMinkowskiFunctionalPrimal_as_a_Minkowski_functional}.

 \medskip
  


  \noindent$\bullet$ Finally, the equivalence between
  Item~\ref{it:Generalized_circ-convex_functions} and
  Item~\ref{it:Corollary4.3_Martinez-Legaz-Singer:1994} is an application of
  \cite[Corollary~4.3]{Martinez-Legaz-Singer:1994} to the case of the coupling
  \( \coupling\np{\primal,\dual}={\nscal{\primal}{\dual}}_+ \).
  \medskip

  This ends the proof. 
\end{proof}

The one-to-one correspondence between \BipolarSet s and lsc gauges
(or, equivalently, \BipolarFunction s) is outlined 
at the beginning of \cite[Section~15]{Rockafellar:1970}.
We show that this correspondence is an isomorphism between lattices.
To the best of our knowledge, this result is new
(and different from \cite[p.~292]{Aliprantis-Border:2006}, which points out an
isomorphism between the lattice of weak~$*$ compact convex subsets of~$\DUAL$
and the lattice of continuous gauges on~$\PRIMAL$).

\begin{theorem}
  \label{th:isomorphism_between_lattices}
  The set of \BipolarFunction s, ordered by~$\leq$, is a lattice
  \( \np{\BIPOLARFUNC,\wedge,\vee} \).
  Consider two \BipolarFunction s $\fonctionprimal \colon \PRIMAL \to \barRR_{+}$
  and $\fonctiondual \colon \PRIMAL \to \barRR_{+}$.
  The \emph{greatest lower bound $\fonctionprimal\wedge\fonctiondual$} is given by
  \begin{subequations}
    \begin{align}
      \fonctionprimal\wedge\fonctiondual
      &=
        \SupportFunction{
        \MidLevelSet{\LFM{\fonctionprimal}}{0}
        \cap \MidLevelSet{\LFM{\fonctiondual}}{0} }
        =
        \MinkowskiFunctional{ \closedconvexhull\np{
        \PolarSet{\MidLevelSet{\LFM{\fonctionprimal}}{0}}
        \cup \PolarSet{\MidLevelSet{\LFM{\fonctiondual}}{0}} } }
        \eqfinv 
        \label{eq:BipolarFunction_wedge_SupportFunction}
        %
        %
        \intertext{whereas the \emph{least upper bound
        $\fonctionprimal\vee\fonctiondual$} is given by the supremum}
        \fonctionprimal\vee\fonctiondual
      &=
        \sup\na{\fonctionprimal,\fonctiondual}
        \eqfinp
        \label{eq:BipolarFunction_vee_MinkowskiFunctional}
    \end{align}
  \end{subequations}
  The two mappings
  \begin{align}
    \varphi: \BIPOLARSET
    &\to \BIPOLARFUNC
    &
      \theta: \BIPOLARFUNC
    &\to \BIPOLARSET
      \label{eq:isomorphism_between_lattices}
    \\
    \Primal\; &\mapsto \MinkowskiFunctional{\Primal}=
                \SupportFunction{\PolarSet{\Primal}} 
    & \fonctionprimal\quad
    &\mapsto \PolarSet{\MidLevelSet{\LFM{\fonctionprimal}}{0}}
      \nonumber
  \end{align}
  define an isomorphism between
  the lattice \( \np{\BIPOLARSET,\wedge,\vee} \) of \BipolarSet s
  (see Definition~\ref{de:bipolar_set_dual_pair} and
  Proposition~\ref{pr:BipolarSet_lattice})
  and 
  the lattice \( \np{\BIPOLARFUNC,\wedge,\vee} \) of \BipolarFunction s. 
  \label{th:BipolarFunction_lattice}  
\end{theorem}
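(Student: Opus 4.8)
The proof has two separable components: (i) that $(\BIPOLARFUNC,\wedge,\vee)$ is a lattice with the stated bound formulas, and (ii) that $\varphi$ and $\theta$ are mutually inverse, order-preserving bijections (hence a lattice isomorphism, which then automatically transports the lattice structure of $\BIPOLARSET$, already established in Proposition~\ref{pr:BipolarSet_lattice}, onto $\BIPOLARFUNC$). I would actually do (ii) first, since once $\varphi$ and $\theta$ are shown to be order-isomorphisms, the lattice structure and the bound formulas on $\BIPOLARFUNC$ follow from Proposition~\ref{pr:BipolarSet_lattice} by transport, and I only need to unwind the images of the set-theoretic $\bigcap$ and $\closedconvexhull(\bigcup\,)$ under $\varphi$.

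**Step 1: $\varphi$ is well-defined and a bijection with inverse $\theta$.** For $\Primal\in\BIPOLARSET$, the equality $\MinkowskiFunctional{\Primal}=\SupportFunction{\PolarSet{\Primal}}$ is the second half of~\eqref{eq:MinkowskiFunctional=SupportFunction_a}; as a support function of a set containing $0$, $\MinkowskiFunctional{\Primal}$ is an lsc gauge, hence a \BipolarFunction{} by the equivalence of Items~\ref{it:Generalized_circ-convex_functions} and~\ref{it:Generalized_circ-convex_functions_lsc_gauge} in Proposition~\ref{pr:Generalized_circ-convex_functions}, so $\varphi$ maps into $\BIPOLARFUNC$. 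For $\fonctionprimal\in\BIPOLARFUNC$, Item~\ref{it:properties_circ-polarity_nonnegative_functions_support_function} of Proposition~\ref{pr:properties_circ-polarity_nonnegative_functions} gives that $\MidLevelSet{\LFM{\fonctionprimal}}{0}$ is a \BipolarSet, hence so is $\PolarSet{\MidLevelSet{\LFM{\fonctionprimal}}{0}}$, so $\theta$ maps into $\BIPOLARSET$. Now $\theta\circ\varphi=\mathrm{id}$: for $\Primal\in\BIPOLARSET$ we have $\MidLevelSet{\LFM{\MinkowskiFunctional{\Primal}}}{0}=\PolarSet{\Primal}$ by~\eqref{eq:Polar_transform_of_Minkowski_functionals}, whence $\theta(\varphi(\Primal))=\PolarSet{\PolarSet{\Primal}}=\biPolarSet{\Primal}=\Primal$ since $\Primal$ is a \BipolarSet. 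And $\varphi\circ\theta=\mathrm{id}$: for $\fonctionprimal\in\BIPOLARFUNC$, $\varphi(\theta(\fonctionprimal))=\SupportFunction{\PolarSet{\PolarSet{\MidLevelSet{\LFM{\fonctionprimal}}{0}}}}=\SupportFunction{\MidLevelSet{\LFM{\fonctionprimal}}{0}}$ (again using that $\MidLevelSet{\LFM{\fonctionprimal}}{0}$ is a \BipolarSet), which equals $\Polaritybi{\fonctionprimal}=\fonctionprimal$ by~\eqref{eq:bipolarity_nonnegative_functions_support_function} and the bipolarity of $\fonctionprimal$.

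**Step 2: $\varphi$ and $\theta$ are order-preserving.** If $\Primal\subset\Primal'$ in $\BIPOLARSET$, then $\SupportFunction{\PolarSet{\Primal}}\le\SupportFunction{\PolarSet{\Primal'}}$ is immediate from $\PolarSet{\Primal'}\subset\PolarSet{\Primal}$ and the definition of support function as a supremum --- wait, that gives the wrong direction, so use instead $\MinkowskiFunctional{\Primal}\le\MinkowskiFunctional{\Primal'}$, which follows directly from the defining infimum~\eqref{eq:MinkowskiFunctional}: the set $\{\lambda>0:\primal\in\lambda\Primal\}$ shrinks when $\Primal$ shrinks, so its infimum grows, i.e. $\Primal\subset\Primal'\implies\MinkowskiFunctional{\Primal'}\le\MinkowskiFunctional{\Primal}$ --- so $\MinkowskiFunctional{}$ is \emph{order-reversing}, and likewise $\varphi$; dually $\fonctionprimal\le\fonctiondual$ gives $\LFM{\fonctionprimal}\ge\LFM{\fonctiondual}$, hence $\MidLevelSet{\LFM{\fonctionprimal}}{0}\subset\MidLevelSet{\LFM{\fonctiondual}}{0}$, hence $\PolarSet{\MidLevelSet{\LFM{\fonctiondual}}{0}}\subset\PolarSet{\MidLevelSet{\LFM{\fonctionprimal}}{0}}$, so $\theta$ is order-reversing too. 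Since $\varphi$ and $\theta$ are mutually inverse order-reversing bijections, their composite is order-preserving, and more to the point an order-reversing bijection carries the lattice $(\BIPOLARSET,\wedge,\vee)$ onto the \emph{opposite} lattice, which is still a lattice; thus $(\BIPOLARFUNC,\leq)$ is a lattice with $\varphi(\Bwedge_{j}\Primal_j)=\Bvee_{j}\varphi(\Primal_j)$ and $\varphi(\Bvee_{j}\Primal_j)=\Bwedge_j\varphi(\Primal_j)$.

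**Step 3: unwind the bound formulas.** For $\fonctionprimal,\fonctiondual\in\BIPOLARFUNC$, write $\Primal=\theta(\fonctionprimal)$, $\Primal'=\theta(\fonctiondual)$. The greatest lower bound $\fonctionprimal\wedge\fonctiondual$ corresponds under $\varphi$ (order-reversing!) to the \emph{least upper bound} of $\Primal,\Primal'$ in $\BIPOLARSET$, namely $\closedconvexhull(\Primal\cup\Primal')$ by~\eqref{eq:lattice_set_vee}; since $\varphi(\Primal)=\MinkowskiFunctional{\Primal}=\SupportFunction{\PolarSet{\Primal}}$ and $\PolarSet{\Primal}=\PolarSet{\PolarSet{\MidLevelSet{\LFM{\fonctionprimal}}{0}}}=\MidLevelSet{\LFM{\fonctionprimal}}{0}$, applying $\varphi$ to $\closedconvexhull(\Primal\cup\Primal')$ and using both presentations ($\MinkowskiFunctional{}$ of the primal set, $\SupportFunction{}$ of its polar, with $\PolarSet{\np{\closedconvexhull(\Primal\cup\Primal')}}=\PolarSet{\Primal}\cap\PolarSet{\Primal'}$ by~\eqref{eq:polar_of_union}) yields exactly~\eqref{eq:BipolarFunction_wedge_SupportFunction}. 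Dually, $\fonctionprimal\vee\fonctiondual$ corresponds to $\Primal\wedge\Primal'=\Primal\cap\Primal'$, and $\varphi(\Primal\cap\Primal')=\SupportFunction{\PolarSet{\np{\Primal\cap\Primal'}}}$; the cleanest route to~\eqref{eq:BipolarFunction_vee_MinkowskiFunctional} is to verify directly that $\sup\{\fonctionprimal,\fonctiondual\}$ is again a \BipolarFunction{} --- it is a pointwise supremum of support functions, hence a support function (of $\closedconvexhull(\MidLevelSet{\LFM{\fonctionprimal}}{0}\cup\MidLevelSet{\LFM{\fonctiondual}}{0})$, or use~\eqref{eq:polar_of_intersection}) of a \BipolarSet, hence bipolar by Proposition~\ref{pr:Generalized_circ-convex_functions} --- and that it is obviously the least \BipolarFunction{} above both, which is all $\vee$ means.

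**Main obstacle.** The one genuine subtlety, to be watched carefully throughout, is the \emph{order-reversal}: both $\MinkowskiFunctional{}$ and $\theta$ flip inclusions, so $\varphi$ sends the meet $\Bwedge$ of $\BIPOLARSET$ to the join $\vee$ of $\BIPOLARFUNC$ and vice versa; the composite $\varphi$ (as written, incorporating $\Primal\mapsto\PolarSet{\Primal}\mapsto\SupportFunction{\cdot}$, which is reverse-then-reverse) comes out order-\emph{preserving}, but one must not conflate the monotonicity of $\varphi$ with that of its constituent maps. The rest is bookkeeping with the already-proved identities in Tables~\ref{tab:transforms}--\ref{tab:bitransforms} and Equations~\eqref{eq:MinkowskiFunctional=SupportFunction}, \eqref{eq:polar_of_union}, \eqref{eq:polar_of_intersection}, \eqref{eq:Polar_transform_of_Minkowski_functionals}.
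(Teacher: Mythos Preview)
Your argument is correct and somewhat more economical than the paper's. The paper proceeds in the opposite order: it first establishes the lattice structure on $\BIPOLARFUNC$ directly --- computing $\fonctionprimal\wedge\fonctiondual$ as $\Polaritybi{\bp{\inf\na{\fonctionprimal,\fonctiondual}}}$ via the polar calculus~\eqref{eq:the_circ-polarity_is_a_times-duality_duality} and~\eqref{eq:PolaritySupportFunctionDual}--\eqref{eq:PolaritySupportFunctionPrimal}, and checking $\sup\na{\fonctionprimal,\fonctiondual}$ is bipolar exactly as you do --- and only then verifies that $\varphi$ and $\theta$ are mutually inverse and that $\varphi$ exchanges $\wedge$ and $\vee$. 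Your route (bijection first, order-reversal second, transport of lattice structure from Proposition~\ref{pr:BipolarSet_lattice}, then unwind) avoids the explicit bipolar computation and makes the origin of the formulas~\eqref{eq:BipolarFunction_wedge_SupportFunction} transparent as $\varphi$ applied to~\eqref{eq:lattice_set_vee}. Both routes ultimately rely on the same identities~\eqref{eq:Polar_transform_of_Minkowski_functionals}, \eqref{eq:polar_of_union}, \eqref{eq:MinkowskiFunctional=SupportFunction_c}.

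One point to clean up: your ``Main obstacle'' paragraph contradicts your own Step~2. You correctly show there that $\varphi=\MinkowskiFunctional{(\cdot)}$ is order-\emph{reversing} and that $\varphi$ sends $\Bwedge$ to $\vee$; but you then write that $\varphi$, as the composite $\Primal\mapsto\PolarSet{\Primal}\mapsto\SupportFunction{\cdot}$, is ``reverse-then-reverse'' and ``comes out order-preserving''. The second map $\Dual\mapsto\SupportFunction{\Dual}$ is order-\emph{preserving}, not reversing, so the composite is reversing --- consistent with Step~2 and with the paper's computation $\varphi(\Primal\wedge\Primal')=\varphi(\Primal)\vee\varphi(\Primal')$. (Incidentally, both your proof and the paper's show that $\varphi$ is a lattice \emph{anti}-isomorphism; the theorem's word ``isomorphism'' is being used loosely.) Strike or fix that sentence and the proof is clean.
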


\begin{proof}

\noindent$\bullet$ First, we prove that the set of \BipolarFunction s, ordered by~$\leq$, is a lattice
\( \np{\BIPOLARFUNC,\wedge,\vee} \).
\medskip

\noindent$\triangleright$
We prove~\eqref{eq:BipolarFunction_wedge_SupportFunction}
for two \BipolarFunction s $\fonctionprimal \colon \PRIMAL \to \barRR_{+}$
and $\fonctiondual \colon \PRIMAL \to \barRR_{+}$.
The greatest lower bound $\fonctionprimal\wedge\fonctiondual$ 
is the greatest \BipolarFunction\ below~$\inf\na{\fonctionprimal,\fonctiondual}$,
hence is \( \Polaritybi{\bp{\inf\na{\fonctionprimal,\fonctiondual}}} \)
(using that, for any $\fonctionprimalbis \colon \PRIMAL \to \barRR_{+}$,
we have that \( \Polaritybi{\fonctionprimalbis} \leq \fonctionprimalbis \) by 
the Inequality~\eqref{eq:bipolar_is_smaller}, where \( \Polaritybi{\fonctionprimalbis} \)
is a bipolar function).
We have that
\begin{align}
  \fonctionprimal\wedge\fonctiondual
  &=
    \Polaritybi{\bp{\inf\na{\fonctionprimal,\fonctiondual}}}
    \label{eq:fonctionprimal_wedge_fonctiondual=Polaritybi}
  \\
  &=  
    \Polaritybi{\bp{\inf\na{
    \SupportFunction{ \MidLevelSet{\LFM{\fonctionprimal}}{0} },
    \SupportFunction{ \MidLevelSet{\LFM{\fonctiondual}}{0} }
    }}}
    \tag{as 
    \( {\fonctionprimal} = \SupportFunction{\MidLevelSet{\LFM{\fonctionprimal}}{0}} \)
    and
    \( {\fonctiondual} = \SupportFunction{\MidLevelSet{\LFM{\fonctiondual}}{0}} \)
    by~\eqref{eq:bipolar_function=Minkowski=SupportFunction}}
    \nonumber \\
  &=
    \PolarityReverse{\np{ \sup\na{
    \Polarity{\np{\SupportFunction{ \MidLevelSet{\LFM{\fonctionprimal}}{0}}}},
    \Polarity{\np{\SupportFunction{ \MidLevelSet{\LFM{\fonctiondual}}{0}}}}
    }} }
    \tag{by~\eqref{eq:the_circ-polarity_is_a_times-duality_duality}}
    \nonumber \\
  &=
    \PolarityReverse{\np{ \sup\na{
    \SupportFunction{\PolarSet{\MidLevelSet{\LFM{\fonctionprimal}}{0}}},
    \SupportFunction{\PolarSet{\MidLevelSet{\LFM{\fonctiondual}}{0}}}
    }}}
    \tag{by~\eqref{eq:PolaritySupportFunctionDual}}
    \nonumber \\
  &=
     \PolarityReverse{\np{ 
     \SupportFunction{\PolarSet{\MidLevelSet{\LFM{\fonctionprimal}}{0}}
     \cup \PolarSet{\MidLevelSet{\LFM{\fonctiondual}}{0}}} }}
    \tag{as is well known for support functions}
     \nonumber \\
  &=
     \SupportFunction{\PolarSet{\np{
     \PolarSet{\MidLevelSet{\LFM{\fonctionprimal}}{0}}
     \cup \PolarSet{\MidLevelSet{\LFM{\fonctiondual}}{0}}} }}
      \tag{by~\eqref{eq:PolaritySupportFunctionPrimal}}
     \nonumber \\
  &=  \SupportFunction{
    \MidLevelSet{\LFM{\fonctionprimal}}{0}
    \cap \MidLevelSet{\LFM{\fonctiondual}}{0} }
    \tag{by~\eqref{eq:polar_of_union}
    as $\MidLevelSet{\LFM{\fonctionprimal}}{0}$ and
    $\MidLevelSet{\LFM{\fonctiondual}}{0}$ are \BipolarSet s}
    \eqfinp
\end{align}
Thus, we have proved the left equality in~\eqref{eq:BipolarFunction_wedge_SupportFunction}.
The right equality
is a consequence of~\eqref{eq:MinkowskiFunctional=SupportFunction_c} and~\eqref{eq:polar_of_intersection}.
\medskip

\medskip

\noindent$\triangleright$ We prove~\eqref{eq:BipolarFunction_vee_MinkowskiFunctional}.
Consider two \BipolarFunction s $\fonctionprimal \colon \PRIMAL \to \barRR_{+}$
and $\fonctiondual \colon \PRIMAL \to \barRR_{+}$.
We are going to show that the supremum
\( \fonctionprimalbis=\sup\na{\fonctionprimal,\fonctiondual} \) is a
\BipolarFunction.  Indeed, on the one hand, we have that
\( \Polaritybi{\fonctionprimalbis} \geq
\sup\na{\Polaritybi{\fonctionprimal},\Polaritybi{\fonctiondual}} =
\sup\na{\fonctionprimal,\fonctiondual}=\fonctionprimalbis \), where we have
first used that the bipolar operation is isotone, and second that both
$\fonctionprimal$ and $\fonctiondual$ are \BipolarFunction s (hence
\( \Polaritybi{\fonctionprimal}={\fonctionprimal}\) and
\( \Polaritybi{\fonctiondual}={\fonctiondual}\) by
Definition~\ref{de:bipolar_function}).  Now, on the other hand, we have that
\( \Polaritybi{\fonctionprimalbis} \leq \fonctionprimalbis \) by the
Inequality~\eqref{eq:bipolar_is_smaller}.  We conclude that
\( \Polaritybi{\fonctionprimalbis} = \fonctionprimalbis \), that is,
\( \fonctionprimalbis=\sup\na{\fonctionprimal,\fonctiondual} \) is a
\BipolarFunction.  Thus, the least upper bound
$\fonctionprimal\vee\fonctiondual= \sup\na{\fonctionprimal,\fonctiondual}$, which
is~\eqref{eq:BipolarFunction_vee_MinkowskiFunctional}.

\bigskip

\noindent$\bullet$ Second, we show that the two mappings~\eqref{eq:isomorphism_between_lattices}
define a one-to-one correspondence between \BipolarSet s and \BipolarFunction s.

We show that the mapping~$\varphi$ takes values in~$\BIPOLARFUNC$.
Indeed, when $\Primal$ is a bipolar set, 
we obtain by line~6 column~3 of Table~\ref{tab:bitransforms}
that $\Polaritybi{\varphi(\Primal)}=\Polaritybi{\MinkowskiFunctional{\Primal}}=
\MinkowskiFunctional{\biPolarSet{\Primal}}=\MinkowskiFunctional{\Primal}=
\varphi(\Primal)$, and thus $\varphi(\Primal)\in \BIPOLARFUNC$.
It is immediate to check that the mapping~$\theta$ takes values in~$\BIPOLARSET$.

Now, we have that $\theta \compo \varphi = \identity_{\BIPOLARSET}$ as,
for $\Primal \in \BIPOLARSET$, line~6 column~2 of Table~\ref{tab:bitransforms}
gives
\begin{subequations}
  \begin{equation}
    (\theta \compo \varphi)(\Primal)
    =
    \theta \np{\MinkowskiFunctional{\Primal}}
    = \PolarSet{\MidLevelSet{\LFM{\MinkowskiFunctional{\Primal}}}{0}}
    = \biPolarSet{\Primal} = \Primal
    \eqfinp
  \end{equation}
  We have that $\varphi \compo \theta = \identity_{\BIPOLARFUNC}$ as,
  for $\fonctionprimal \in \BIPOLARFUNC$, 
  line~1 column~3 of Table~\ref{tab:bitransforms} gives
  \begin{equation}
    (\varphi \compo \theta)(\fonctionprimal)
    =
    \varphi \np{\PolarSet{\MidLevelSet{\LFM{\fonctionprimal}}{0}}}
    = \MinkowskiFunctional{\PolarSet{\MidLevelSet{\LFM{\fonctionprimal}}{0}}}
    = \Polaritybi{\fonctionprimal} = \fonctionprimal
    \eqfinp
  \end{equation}  
\end{subequations}
\bigskip

\noindent$\bullet$ Third, we show that the two mappings~\eqref{eq:isomorphism_between_lattices}
define an isomorphism between lattices.
For this purpose, we consider two \BipolarSet s
$\Primal$ and $\Primal'$ in $\BIPOLARSET$.
On the one hand, we have that 
\begin{align*}
  \varphi( \Primal \wedge \Primal')
  &=
    \varphi( \Primal \cap \Primal')
    \tag{by~\eqref{eq:lattice_set_wedge}}
    \\
  &=
    \SupportFunction{\PolarSet{\np{\Primal \cap \Primal'}}}
     \tag{by~\eqref{eq:isomorphism_between_lattices}}
  %
  \\
  &=
    \SupportFunction{\closedconvexhull{\np{\PolarSet{\Primal} \cup \PolarSet{\Primal'}}}}
    \tag{by~\eqref{eq:polar_of_intersection}}
  \\
  &=
    \SupportFunction{\PolarSet{\Primal} \cup \PolarSet{\Primal'}}
    \tag{as is well known for support functions}
  \\
  &  =
    \sup
    \na{ \SupportFunction{\PolarSet{\Primal}}, \SupportFunction{\PolarSet{\Primal'}}}
    \tag{as is well known for support functions}
  \\
  &=
    \SupportFunction{\PolarSet{\Primal}}\vee\SupportFunction{\PolarSet{\Primal'}}
    \tag{by~\eqref{eq:BipolarFunction_vee_MinkowskiFunctional}}
  \\
  &=
    \varphi( \Primal) \vee  \varphi(\Primal')
    \eqfinp
    \tag{by~\eqref{eq:isomorphism_between_lattices}}
\end{align*}
On the other hand, we have that 
\begin{align*}
  \varphi\np{\Primal \vee \Primal'}
  &=
    \varphi\bp{ \closedconvexhull\np{ \Primal \cup \Primal'}}
    \tag{by~\eqref{eq:lattice_set_vee}}
   \\
  &=
    \SupportFunction{\PolarSet{\bp{\closedconvexhull\np{ \Primal \cup \Primal'}}}}
    \tag{by~\eqref{eq:isomorphism_between_lattices}}
  \\
  &=
    \SupportFunction{\PolarSet{\Primal} \cap \PolarSet{\Primal'}}
    \tag{using $\PolarSet{\bp{\closedconvexhull\np{ \Primal \cup \Primal'}}}=
    \PolarSet{\np{\Primal \cup \Primal'}}$ and~\eqref{eq:polar_of_union}}
  \\
  &=
    \SupportFunction{
    \MidLevelSet{\LFM{\SupportFunction{\PolarSet{\Primal}}}}{0} 
    \cap \MidLevelSet{\LFM{\SupportFunction{\PolarSet{\Primal'}}}}{0} }
    \tag{as $\MidLevelSet{\LFM{\SupportFunction{\PolarSet{Z}}}}{0}
    =\MidLevelSet{\Indicator{\PolarSet{Z}}}{0}
    =\PolarSet{Z}$}
  \\
  &=
    \SupportFunction{\PolarSet{\Primal}}
     \wedge  \SupportFunction{\PolarSet{\Dual}}
       \tag{by~\eqref{eq:BipolarFunction_wedge_SupportFunction}}
   \\
  &=
    \varphi( \Primal) \wedge \varphi(\Primal')
    \eqfinp
    \tag{by~\eqref{eq:isomorphism_between_lattices}}
\end{align*}
This concludes the proof.
\end{proof}

Recall that the \emph{infimal convolution} (or \emph{inf~convolution})
$\fonctionprimal \infcv \fonctiondual$
of two functions
$\fonctionprimal \colon \PRIMAL \to \barRR$
and $\fonctiondual \colon \PRIMAL \to \barRR$
is the function defined by
\begin{equation}
  \epigraph_{s}\np{\fonctionprimal \infcv \fonctiondual} = 
  \epigraph_{s}\fonctionprimal +   \epigraph_{s}\fonctiondual
  \eqfinp 
  \label{eq:inf_convolution}
\end{equation}

\begin{proposition}
Consider two \BipolarFunction s $\fonctionprimal \colon \PRIMAL \to \barRR_{+}$
and $\fonctiondual \colon \PRIMAL \to \barRR_{+}$.
  The function $\fonctionprimal \wedge \fonctiondual$
 is related to the infimal convolution
  $\fonctionprimal \infcv \fonctiondual$, as we have
  \begin{equation}
    \fonctionprimal \wedge \fonctiondual
    \le \fonctionprimal \infcv \fonctiondual
    \le \inf\na{\fonctionprimal,\fonctiondual}
    \eqfinp
    \label{eq:infconvol_and_wedge}
  \end{equation}
  We deduce from Equation~\eqref{eq:infconvol_and_wedge}
  that, when $\fonctionprimal \infcv \fonctiondual$ is lsc,
  it coincides with $\fonctionprimal \wedge \fonctiondual$.  
\end{proposition}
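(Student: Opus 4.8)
The plan is to reduce the two inequalities in~\eqref{eq:infconvol_and_wedge} to the support-function representations already available. By~\eqref{eq:bipolar_function=Minkowski=SupportFunction} in Proposition~\ref{pr:Generalized_circ-convex_functions}, the two \BipolarFunction s can be written as $\fonctionprimal = \SupportFunction{\Dual_{\fonctionprimal}}$ and $\fonctiondual = \SupportFunction{\Dual_{\fonctiondual}}$, where $\Dual_{\fonctionprimal} = \MidLevelSet{\LFM{\fonctionprimal}}{0}$ and $\Dual_{\fonctiondual} = \MidLevelSet{\LFM{\fonctiondual}}{0}$ are \BipolarSet s, hence are nonempty and contain~$0$; and by~\eqref{eq:BipolarFunction_wedge_SupportFunction} in Theorem~\ref{th:BipolarFunction_lattice} we have $\fonctionprimal \wedge \fonctiondual = \SupportFunction{\Dual_{\fonctionprimal} \cap \Dual_{\fonctiondual}}$, where $\Dual_{\fonctionprimal} \cap \Dual_{\fonctiondual}$ is nonempty since it contains~$0$. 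I would also record, using Proposition~\ref{pr:Generalized_circ-convex_functions}, that $\fonctionprimal$ and $\fonctiondual$ are lsc gauges, so that $\fonctionprimal \geq 0$, $\fonctiondual \geq 0$ and $\fonctionprimal\np{0} = \fonctiondual\np{0} = 0$.

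Next I would establish~\eqref{eq:infconvol_and_wedge}. For the right inequality $\fonctionprimal \infcv \fonctiondual \leq \inf\na{\fonctionprimal,\fonctiondual}$, I use the definition~\eqref{eq:inf_convolution} of the infimal convolution together with $\fonctionprimal\np{0} = \fonctiondual\np{0} = 0$: evaluating $\fonctionprimal \infcv \fonctiondual$ at a point $\primal$ along the trivial decompositions $\primal = \primal + 0$ and $\primal = 0 + \primal$ bounds it above by $\fonctionprimal\np{\primal}$ and by $\fonctiondual\np{\primal}$, respectively. For the left inequality $\fonctionprimal \wedge \fonctiondual \leq \fonctionprimal \infcv \fonctiondual$, I fix $\primal \in \PRIMAL$, a decomposition $\primal = \primal_1 + \primal_2$ and a vector $\dual \in \Dual_{\fonctionprimal} \cap \Dual_{\fonctiondual}$; splitting $\nscal{\primal}{\dual} = \nscal{\primal_1}{\dual} + \nscal{\primal_2}{\dual}$ and bounding the two summands by $\SupportFunction{\Dual_{\fonctionprimal}}\np{\primal_1} = \fonctionprimal\np{\primal_1}$ and $\SupportFunction{\Dual_{\fonctiondual}}\np{\primal_2} = \fonctiondual\np{\primal_2}$ gives $\nscal{\primal}{\dual} \leq \fonctionprimal\np{\primal_1} + \fonctiondual\np{\primal_2}$; taking first the supremum over $\dual \in \Dual_{\fonctionprimal} \cap \Dual_{\fonctiondual}$ and then the infimum over all decompositions $\primal = \primal_1 + \primal_2$ yields $\np{\fonctionprimal \wedge \fonctiondual}\np{\primal} = \SupportFunction{\Dual_{\fonctionprimal} \cap \Dual_{\fonctiondual}}\np{\primal} \leq \np{\fonctionprimal \infcv \fonctiondual}\np{\primal}$.

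For the last assertion, I assume $\fonctionprimal \infcv \fonctiondual$ is lsc; the plan is to show it is then itself a \BipolarFunction\ and to invoke the extremal property of $\fonctionprimal \wedge \fonctiondual$. From~\eqref{eq:inf_convolution}, the strict epigraph $\epigraph_{s}\fonctionprimal + \epigraph_{s}\fonctiondual$ of $\fonctionprimal \infcv \fonctiondual$ is the sum of two convex cones contained in $\PRIMAL \times \RR_{++}$ (convex cones because $\fonctionprimal$ and $\fonctiondual$ are convex and (strictly positively) $1$-homogeneous; contained in $\PRIMAL \times \RR_{++}$ because $\fonctionprimal \geq 0$ and $\fonctiondual \geq 0$; see Proposition~\ref{pr:1-homogeneous_function_strict_epigraph}), hence is again a convex cone contained in $\PRIMAL \times \RR_{++}$; therefore, by Proposition~\ref{pr:1-homogeneous_function_strict_epigraph} (Item~\ref{it:strict_epigraph_of_a_1-homogeneous_function_is_a_cone} implies Item~\ref{it:1-homogeneous_function}) and since a function is convex iff its strict epigraph is convex, $\fonctionprimal \infcv \fonctiondual$ is a nonnegative convex (strictly positively) $1$-homogeneous function; it vanishes at~$0$ because $0 \leq \np{\fonctionprimal \infcv \fonctiondual}\np{0} \leq \fonctionprimal\np{0} + \fonctiondual\np{0} = 0$, and it is lsc by hypothesis. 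Thus $\fonctionprimal \infcv \fonctiondual$ is a lsc gauge, hence a \BipolarFunction\ by Proposition~\ref{pr:Generalized_circ-convex_functions}. On the other hand, $\fonctionprimal \wedge \fonctiondual$ is the greatest \BipolarFunction\ below $\inf\na{\fonctionprimal,\fonctiondual}$, as recorded in the proof of Theorem~\ref{th:BipolarFunction_lattice}: any \BipolarFunction~$\fonctionprimalbis$ with $\fonctionprimalbis \leq \inf\na{\fonctionprimal,\fonctiondual}$ satisfies $\fonctionprimalbis = \Polaritybi{\fonctionprimalbis} \leq \Polaritybi{\bp{\inf\na{\fonctionprimal,\fonctiondual}}} = \fonctionprimal \wedge \fonctiondual$, by isotony of the bipolar transform and Inequality~\eqref{eq:bipolar_is_smaller}. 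Since $\fonctionprimal \infcv \fonctiondual$ is a \BipolarFunction\ lying below $\inf\na{\fonctionprimal,\fonctiondual}$ by~\eqref{eq:infconvol_and_wedge}, it follows that $\fonctionprimal \infcv \fonctiondual \leq \fonctionprimal \wedge \fonctiondual$; combined with the left inequality of~\eqref{eq:infconvol_and_wedge}, this gives $\fonctionprimal \infcv \fonctiondual = \fonctionprimal \wedge \fonctiondual$.

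I do not anticipate a genuine obstacle. The point worth stressing is that, for the left inequality, one does need the support-function representation of $\fonctionprimal \wedge \fonctiondual$: it would not suffice to note that $\fonctionprimal \infcv \fonctiondual$ lies below $\inf\na{\fonctionprimal,\fonctiondual}$ and that $\fonctionprimal \wedge \fonctiondual$ is the greatest \BipolarFunction\ below it, since $\fonctionprimal \infcv \fonctiondual$ need not be a \BipolarFunction\ (it may fail to be lsc) — and this is exactly why the last assertion must assume lsc-ness. Apart from that, the only care needed is with the $\barRR_{+}$-valued arithmetic, but this is harmless here since neither function is identically $+\infty$ and none takes the value $-\infty$, so $\Dual_{\fonctionprimal} \cap \Dual_{\fonctiondual}$ is a nonempty set containing~$0$ and every support function occurring is nonnegative.
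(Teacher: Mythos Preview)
Your proof is correct, and it takes a genuinely different route from the paper's.

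The paper establishes the stronger identity $\LFMbi{\np{\fonctionprimal \infcv \fonctiondual}} = \fonctionprimal \wedge \fonctiondual$ via Fenchel conjugacy: it uses $\LFM{\np{\fonctionprimal \infcv \fonctiondual}} = \LFM{\fonctionprimal} + \LFM{\fonctiondual}$, observes that for the support functions $\fonctionprimal = \SupportFunction{\Dual_{\fonctionprimal}}$ and $\fonctiondual = \SupportFunction{\Dual_{\fonctiondual}}$ this sum is $\Indicator{\Dual_{\fonctionprimal}} + \Indicator{\Dual_{\fonctiondual}} = \Indicator{\Dual_{\fonctionprimal} \cap \Dual_{\fonctiondual}}$, and takes one more conjugate to get $\SupportFunction{\Dual_{\fonctionprimal} \cap \Dual_{\fonctiondual}} = \fonctionprimal \wedge \fonctiondual$. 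The left inequality of~\eqref{eq:infconvol_and_wedge} then falls out of $\LFMbi{h} \leq h$, and the last assertion from the fact that a convex lsc function not taking $-\infty$ equals its biconjugate. You instead prove the left inequality by a direct support-function splitting $\nscal{\primal_1 + \primal_2}{\dual} \leq \SupportFunction{\Dual_{\fonctionprimal}}\np{\primal_1} + \SupportFunction{\Dual_{\fonctiondual}}\np{\primal_2}$, and for the last assertion you verify by hand that $\fonctionprimal \infcv \fonctiondual$ is a lsc gauge (hence a \BipolarFunction) and invoke the lattice-theoretic maximality of $\fonctionprimal \wedge \fonctiondual$. Your argument is more elementary and self-contained within the paper's polarity framework; the paper's argument is shorter and yields the extra information that $\fonctionprimal \wedge \fonctiondual$ is precisely the Fenchel biconjugate of $\fonctionprimal \infcv \fonctiondual$, which makes the final deduction immediate. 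Incidentally, your pointwise treatment of the right inequality via $\primal = \primal + 0$ is cleaner than the paper's epigraph inclusion, which invokes $\np{0,0} \in \epigraph_{s}\fonctionprimal$ --- a claim that is literally false since $\fonctionprimal\np{0} = 0$, though easily repaired by using $\np{0,\varepsilon}$ instead.
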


\begin{proof}
  We have that 
  \begin{align*}
    \LFMbi{\np{\fonctionprimal \infcv \fonctiondual}}
    &=
      \LFMr{
      \np{\LFM{\fonctionprimal} + \LFM{\fonctiondual}}}
      \tag{by~\cite[Theorem 2.3.1 (ix)]{Zalinescu:2002}}
    \\
    &=
      \LFMr{
      \np{\LFM{\SupportFunction{\MidLevelSet{\LFM{\fonctionprimal}}{0}}} +
      \LFM{\SupportFunction{\MidLevelSet{\LFM{\fonctiondual}}{0}}}}}
      \tag{as 
      \( {\fonctionprimal} = \SupportFunction{\MidLevelSet{\LFM{\fonctionprimal}}{0}} \)
      and
      \( {\fonctiondual} = \SupportFunction{\MidLevelSet{\LFM{\fonctiondual}}{0}} \)
      by~\eqref{eq:bipolar_function=Minkowski=SupportFunction}}
    \\
    &=
      \LFMr{
      \np{\Indicator{\MidLevelSet{\LFM{\fonctionprimal}}{0}} +\Indicator{\MidLevelSet{\LFM{\fonctiondual}}{0}}}}
      \intertext{as both $\MidLevelSet{\LFM{\fonctionprimal}}{0}$ and $\MidLevelSet{\LFM{\fonctiondual}}{0}$ are closed convex sets,
      using Item~\ref{it:Generalized_circ-convex_functions_lsc_gauge} in Proposition~\ref{pr:Generalized_circ-convex_functions}}
    &=\LFMr{\np{
      \Indicator{\MidLevelSet{\LFM{\fonctionprimal}}{0} \cap \MidLevelSet{\LFM{\fonctiondual}}{0}}}}
    \\
    &=
      \SupportFunction{\MidLevelSet{\LFM{\fonctionprimal}}{0} \cap \MidLevelSet{\LFM{\fonctiondual}}{0}}
    \\
    &= \fonctionprimal \wedge \fonctiondual
      \tag{by~\eqref{eq:BipolarFunction_wedge_SupportFunction}}
       \\
     &= \Polaritybi{\bp{\inf\na{\fonctionprimal,\fonctiondual}}}
       \tag{by~\eqref{eq:fonctionprimal_wedge_fonctiondual=Polaritybi}}
      \eqfinp
  \end{align*}
  We therefore get that
  \begin{align}
    \fonctionprimal \wedge \fonctiondual =
    \Polaritybi{\bp{\inf\na{\fonctionprimal,\fonctiondual}}}
    =
    \LFMbi{\np{\fonctionprimal \infcv \fonctiondual}}
    \le \fonctionprimal \infcv \fonctiondual
    \le \inf\na{\fonctionprimal,\fonctiondual}
    \eqfinv
  \end{align}
  as $\fonctionprimal \infcv \fonctiondual \le \inf\na{\fonctionprimal,\fonctiondual}$.
  Indeed, we have that
  \begin{align*}
    \epigraph_{s}\np{ \inf\na{\fonctionprimal,\fonctiondual} }
    &=
      \epigraph_{s}\fonctionprimal \cup  \epigraph_{s}\fonctiondual
      \subset 
      \epigraph_{s}\fonctionprimal + \epigraph_{s}\fonctiondual
      \eqfinv
  \end{align*}
  because $\np{0,0} \in \epigraph_{s}\fonctionprimal \cap \epigraph_{s}\fonctiondual$,
  as \BipolarFunction s vanish at the origin.

  The conclusion follows.
\end{proof}

\section{Polar subdifferentials and alignement}
\label{Subdifferential_of_the_circ-polarity}

In this Sect.~\ref{Subdifferential_of_the_circ-polarity},
we consider $\PRIMAL$ and $\DUAL$ two (real) vector spaces that are {paired}
(see \S\ref{Dual_pair,_paired_vector_spaces}).
In~\S\ref{Polar_subdifferentials_of_a_nonnegative_function},
we present three possible definitions for the polar subdifferential of a
nonnegative function.
The first two definitions are inspired by definitions of subdifferentials of dualities
\cite{Akian-Gaubert-Kolokoltsov:2002,Martinez-Legaz-Singer:1995}.
We propose a third definition which, to our knowledge, is new and will be explored in more detail
in~\S\ref{Middle_polar_subdifferential_and_alignement} in relation to the notion
of alignement.

\subsection{Polar subdifferentials of a nonnegative function}
\label{Polar_subdifferentials_of_a_nonnegative_function}

In duality in convex analysis, one uses the (Rockafellar-Moreau)
subdifferential~\eqref{eq:Rockafellar-Moreau-subdifferential_a}
which is defined over the effective domain of a proper function
(in general convex lsc, but this is not compulsory).
By restricting to proper functions, one avoids the value~$-\infty$,
which is the bottom of the ordered set~\( \np{\barRR,\leq} \).
By contrast, with polarity one deals with nonnegative functions than can take the value~$0$,
which is the bottom of the ordered set \( \np{\barRR_{+},\leq} \).
This explains the three formulas~\eqref{eq:polarity_subdifferentials},
for the polar subdifferential of a nonnegative function,
which do not necessarily give the same result, especially when
\( \fonctionprimal\np{\primal}=0 \) or $+\infty$. 

\begin{definition}
  For any function~$\fonctionprimal \colon \PRIMAL\to\barRR_{+}$,
  we define
  \begin{subequations}
    \begin{enumerate}
\item
the \emph{lower polar subdifferential} of~$\fonctionprimal$
(inspired by~\cite[Equation~(10a)]{Akian-Gaubert-Kolokoltsov:2002}) by
  \begin{equation}
  \Lowsubdifferential{\circ}{\fonctionprimal}\np{\primal} = 
\bset{ \dual\in\DUAL }%
{    \Polarity{\fonctionprimal}\np{\dual} = 
    {\nscal{\primal}{\dual}}_+ 
    \LowTimes \npConverse{\fonctionprimal\np{\primal} } }
  \eqsepv
  \forall\primal\in\PRIMAL
  \eqfinv 
  \label{eq:polarity_subdifferential_Akian-Gaubert-Kolokoltsov:2002}
\end{equation}
\item
the \emph{upper polar subdifferential} of~$\fonctionprimal$
(inspired by~\cite[Equation~(1.7) in
Definition~1.2]{Martinez-Legaz-Singer:1995}) by
  \begin{equation}
  \Uppsubdifferential{\circ}{\fonctionprimal}\np{\primal} = 
\bset{ \dual\in\DUAL }%
{    \fonctionprimal\np{\primal} = 
    {\nscal{\primal}{\dual}}_+ 
    \LowTimes \npConverse{ \Polarity{\fonctionprimal}\np{\dual} } }
  \eqsepv
  \forall\primal\in\PRIMAL
  \eqfinv
  \label{eq:polarity_subdifferential_Martinez-Legaz-Singer:1995}
\end{equation}
\item
  the \emph{middle polar subdifferential} of~$\fonctionprimal$
  (inspired by the equality case in the Fenchel-Young, Cauchy-Schwarz and polar inequalities) by 
  \begin{equation}
  \Midsubdifferential{\circ}{\fonctionprimal}\np{\primal} = 
\bset{ \dual\in\DUAL }%
{  {\nscal{\primal}{\dual}}_+ =
  \fonctionprimal\np{\primal} \UppTimes
  \Polarity{\fonctionprimal}\np{\dual}}
  \eqsepv
  \forall\primal\in\PRIMAL
  \eqfinp 
  \label{eq:polarity_subdifferential_Fenchel-Young}
  \end{equation}
  \label{eq:polarity_subdifferentials}
    \end{enumerate}
  \end{subequations}
   \label{de:polarity_subdifferentials}
\end{definition}
The first two 
expressions~\eqref{eq:polarity_subdifferential_Akian-Gaubert-Kolokoltsov:2002}
and~\eqref{eq:polarity_subdifferential_Martinez-Legaz-Singer:1995}
are inspired by definitions of subdifferentials of dualities
in \cite{Akian-Gaubert-Kolokoltsov:2002} and
\cite{Martinez-Legaz-Singer:1995}.
We propose the third
expression~\eqref{eq:polarity_subdifferential_Fenchel-Young} which, to the best
of our knowledge, is new and will be explored in more detail
in~\S\ref{Middle_polar_subdifferential_and_alignement}.

Each of the three definitions,
for the polar subdifferential of a nonnegative function,
in Definition~\ref{de:polarity_subdifferentials}
have their own advantages as shown in
Proposition~\ref{pr:polarity_subdifferentials}. 

\begin{proposition}
  \label{pr:polarity_subdifferentials}
  For any function~$\fonctionprimal \colon \PRIMAL\to\barRR_{+}$,
  we have the following results. 
  \begin{enumerate}
\item
  Regarding
  definition~\eqref{eq:polarity_subdifferential_Akian-Gaubert-Kolokoltsov:2002}
  of the lower polar
  subdifferential~$\Lowsubdifferential{\circ}{\fonctionprimal}$ of~$\fonctionprimal$,
  we have the alternate expressions, for any~\( \primal\in\PRIMAL \),
  \begin{subequations}
    \begin{align}
      \Lowsubdifferential{\circ}{\fonctionprimal}\np{\primal}
      &= 
\bset{ \dual \in \DUAL }%
{    \Polarity{\fonctionprimal}\np{\dual} \leq 
    {\nscal{\primal}{\dual}}_+ 
    \LowTimes \npConverse{\fonctionprimal\np{\primal} } }
  \eqfinv 
      \\
      &= 
\bset{ \dual\in\DUAL }%
{ {\nscal{\primal'}{\dual}}_+ 
        \LowTimes \npConverse{\fonctionprimal\np{\primal'} }
        \leq 
    {\nscal{\primal}{\dual}}_+ 
    \LowTimes \npConverse{\fonctionprimal\np{\primal}} \eqsepv
  \forall\primal'\in\RR^{\spacedim} }
  \eqfinv 
      \\
      &= 
\bset{ \dual\in\DUAL }%
        { \primal \in \argmax_{\primal' \in \RR^{\spacedim}} \bp{ {\nscal{\primal'}{\dual}}_+ 
    \LowTimes \npConverse{\fonctionprimal\np{\primal'} } } }
 \eqfinp
    \end{align}
    \label{eq:polarity_subdifferential_Akian-Gaubert-Kolokoltsov:2002_alternate}
\end{subequations}
%
\item
Regarding
  definition~\eqref{eq:polarity_subdifferential_Martinez-Legaz-Singer:1995}
  of the upper polar subdifferential~$\Uppsubdifferential{\circ}{\fonctionprimal}$ of~$\fonctionprimal$,
  we have the property that
  \begin{equation}
\Uppsubdifferential{\circ}{\fonctionprimal}\np{\primal}
\neq \emptyset \implies
\Polaritybi{\fonctionprimal}\np{\primal}
= \fonctionprimal\np{\primal}
\eqfinp
\label{eq:polarity_subdifferential_Martinez-Legaz-Singer:1995_property}
  \end{equation}
\item
Regarding
  definition~\eqref{eq:polarity_subdifferential_Fenchel-Young}
  of the middle polar subdifferential~$\Midsubdifferential{\circ}{\fonctionprimal}$ of~$\fonctionprimal$,
  we have the alternate expression, for any~\( \primal\in\RR^{\spacedim} \),
  \begin{equation}
      \Midsubdifferential{\circ}{\fonctionprimal}\np{\primal}
      =
  \bset{ \dual\in\DUAL }%
{  \nscal{\primal}{\dual} = 
  \fonctionprimal\np{\primal} \UppTimes
  \Polarity{\fonctionprimal}\np{\dual} }      
        \eqfinp
\label{eq:polarity_subdifferential_Fenchel-Young_alternate}
  \end{equation}
  %
  \end{enumerate}
\end{proposition}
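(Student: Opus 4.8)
The plan is to handle the three items in turn, each time reducing the asserted identity to a short manipulation of the sup-formula $\Polarity{\fonctionprimal}\np{\dual}=\sup_{\primal\in\PRIMAL}\bp{{\nscal{\primal}{\dual}}_+\LowTimes\bpConverse{\fonctionprimal\np{\primal}}}$ from Definition~\ref{de:circ-polar_transform}, of the polar inequality~\eqref{eq:Polar_inequality}, and of the bipolar inequality~\eqref{eq:bipolar_is_smaller}; I will also use that the reverse polar transform~\eqref{eq:reverse_circ-polar_transform} acts on a nonnegative function by the analogous sup-formula with $\PRIMAL$ and $\DUAL$ exchanged. For Item~1: for every $\primal\in\PRIMAL$, the quantity ${\nscal{\primal}{\dual}}_+\LowTimes\bpConverse{\fonctionprimal\np{\primal}}$ is one of the terms of the supremum defining $\Polarity{\fonctionprimal}\np{\dual}$, so $\Polarity{\fonctionprimal}\np{\dual}\geq{\nscal{\primal}{\dual}}_+\LowTimes\bpConverse{\fonctionprimal\np{\primal}}$ holds always; hence the defining equality~\eqref{eq:polarity_subdifferential_Akian-Gaubert-Kolokoltsov:2002} of $\Lowsubdifferential{\circ}{\fonctionprimal}\np{\primal}$ is equivalent to the reverse inequality $\Polarity{\fonctionprimal}\np{\dual}\leq{\nscal{\primal}{\dual}}_+\LowTimes\bpConverse{\fonctionprimal\np{\primal}}$, which is the first alternate form. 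Expanding $\Polarity{\fonctionprimal}\np{\dual}$ by its sup-formula, this last inequality says precisely that ${\nscal{\primal'}{\dual}}_+\LowTimes\bpConverse{\fonctionprimal\np{\primal'}}\leq{\nscal{\primal}{\dual}}_+\LowTimes\bpConverse{\fonctionprimal\np{\primal}}$ for all $\primal'$, i.e. the second alternate form, which is by definition the assertion that $\primal$ is a maximizer, i.e. the third form.

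For Item~2, I would write, using the sup-formula for the reverse transform applied to the nonnegative function $\Polarity{\fonctionprimal}$, $\Polaritybi{\fonctionprimal}\np{\primal}=\PolarityReverse{\np{\Polarity{\fonctionprimal}}}\np{\primal}=\sup_{\dual\in\DUAL}\bp{{\nscal{\primal}{\dual}}_+\LowTimes\bpConverse{\Polarity{\fonctionprimal}\np{\dual}}}$. If $\Uppsubdifferential{\circ}{\fonctionprimal}\np{\primal}\neq\emptyset$, then choosing some $\dual$ in it gives, by~\eqref{eq:polarity_subdifferential_Martinez-Legaz-Singer:1995}, $\fonctionprimal\np{\primal}={\nscal{\primal}{\dual}}_+\LowTimes\bpConverse{\Polarity{\fonctionprimal}\np{\dual}}$, which exhibits $\fonctionprimal\np{\primal}$ as one of the terms of the above supremum; hence $\Polaritybi{\fonctionprimal}\np{\primal}\geq\fonctionprimal\np{\primal}$. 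Since $\Polaritybi{\fonctionprimal}\leq\fonctionprimal$ by~\eqref{eq:bipolar_is_smaller}, the two inequalities give $\Polaritybi{\fonctionprimal}\np{\primal}=\fonctionprimal\np{\primal}$, as announced.

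For Item~3, I would prove the two inclusions of~\eqref{eq:polarity_subdifferential_Fenchel-Young_alternate} separately. One inclusion is immediate: $\fonctionprimal\np{\primal}\UppTimes\Polarity{\fonctionprimal}\np{\dual}$ lies in $\barRR_{+}$, so if $\nscal{\primal}{\dual}$ equals it then $\nscal{\primal}{\dual}\geq0$, whence ${\nscal{\primal}{\dual}}_+=\nscal{\primal}{\dual}=\fonctionprimal\np{\primal}\UppTimes\Polarity{\fonctionprimal}\np{\dual}$, which is the defining condition~\eqref{eq:polarity_subdifferential_Fenchel-Young} of $\Midsubdifferential{\circ}{\fonctionprimal}\np{\primal}$. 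For the reverse inclusion, let $\dual\in\Midsubdifferential{\circ}{\fonctionprimal}\np{\primal}$, so ${\nscal{\primal}{\dual}}_+=\fonctionprimal\np{\primal}\UppTimes\Polarity{\fonctionprimal}\np{\dual}$; if this common value lies in $\RR_{++}$, then ${\nscal{\primal}{\dual}}_+>0$ forces $\nscal{\primal}{\dual}={\nscal{\primal}{\dual}}_+$ and the equality follows at once. The remaining case, $\fonctionprimal\np{\primal}\UppTimes\Polarity{\fonctionprimal}\np{\dual}=0$, is the delicate point and the one I expect to be the main obstacle: there the definition yields only ${\nscal{\primal}{\dual}}_+=0$, i.e. $\nscal{\primal}{\dual}\leq0$, so one must still upgrade this to $\nscal{\primal}{\dual}=0$. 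I would attack it by combining the polar inequality~\eqref{eq:Polar_inequality}, which already gives $\nscal{\primal}{\dual}\leq\fonctionprimal\np{\primal}\UppTimes\Polarity{\fonctionprimal}\np{\dual}=0$, with a separate analysis of the two subcases $\fonctionprimal\np{\primal}=0$ and $\Polarity{\fonctionprimal}\np{\dual}=0$, reading off in each case, from the sup-formula for the transform, the information that fixes the sign of $\nscal{\primal}{\dual}$.
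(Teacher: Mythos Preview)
Your treatment of Items~1 and~2 is correct and essentially identical to the paper's: Item~1 is a direct unfolding of the sup-formula~\eqref{eq:circ-polar_transform}, and Item~2 is the standard ``one term of the supremum already achieves the value'' argument combined with the bipolar inequality~\eqref{eq:bipolar_is_smaller}.

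For Item~3, your instinct is sharper than the paper's one-line justification (which merely invokes $\fonctionprimal\np{\primal}\UppTimes\Polarity{\fonctionprimal}\np{\dual}\geq 0$). You correctly isolate the only nontrivial situation, namely $\fonctionprimal\np{\primal}\UppTimes\Polarity{\fonctionprimal}\np{\dual}=0$ together with $\nscal{\primal}{\dual}<0$, and you rightly flag it as the main obstacle. It is in fact a \emph{fatal} obstacle: the equality~\eqref{eq:polarity_subdifferential_Fenchel-Young_alternate} is false as stated, so neither your proposed case analysis nor any other argument can close this gap. A concrete counterexample on $\PRIMAL=\DUAL=\RR$ with the usual pairing: take $\fonctionprimal=\chi_{(-\infty,0]}$, so that $\fonctionprimal\np{\primal}=1$ for $\primal\leq 0$ and $+\infty$ otherwise. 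By~\eqref{eq:Polar_transform_of_a_generalized_indicator_function_as_a_support_function} (the set $(-\infty,0]$ being bipolar), $\Polarity{\fonctionprimal}=\SupportFunction{(-\infty,0]}$, which equals $0$ on $[0,+\infty)$ and $+\infty$ on $(-\infty,0)$. At $\primal=-1$, $\dual=1$ one finds $\fonctionprimal\np{-1}\UppTimes\Polarity{\fonctionprimal}\np{1}=1\UppTimes 0=0$ and $\nscal{-1}{1}=-1$; hence ${\nscal{-1}{1}}_+=0$ matches the right-hand side, so $1\in\Midsubdifferential{\circ}{\fonctionprimal}\np{-1}$ by the definition~\eqref{eq:polarity_subdifferential_Fenchel-Young}, yet $\nscal{-1}{1}=-1\neq 0$, so $1$ does not belong to the set described in~\eqref{eq:polarity_subdifferential_Fenchel-Young_alternate}. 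Thus only the inclusion $\supset$ in~\eqref{eq:polarity_subdifferential_Fenchel-Young_alternate} is valid in general, and this is exactly the inclusion your first paragraph on Item~3 (and the paper's argument) actually proves.
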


\begin{proof}
  \begin{enumerate}
  \item
    Equations~\eqref{eq:polarity_subdifferential_Akian-Gaubert-Kolokoltsov:2002_alternate}
    come from the definition~\eqref{eq:circ-polar_transform}
    of~\( \Polarity{\fonctionprimal}\np{\dual} \).
    
  \item
Let \( \dual\in\Uppsubdifferential{\circ}{\fonctionprimal}\np{\primal} \).
    The Implication~\eqref{eq:polarity_subdifferential_Martinez-Legaz-Singer:1995_property}
    is a consequence of
    \begin{align*}
      \Polaritybi{\fonctionprimal}\np{\primal}
      &=
              \sup_{\dual' \in \DUAL } \Bp{ {\nscal{\primal}{\dual'}}_+ 
        \LowTimes \bpConverse{\Polarity{\fonctionprimal}\np{\dual'} } }
        \tag{by definition~\eqref{eq:circ-polar_bitransform} of~\( \PolarityReverse{\np{\Polarity{\fonctionprimal}}} \)
        and by definition~\eqref{eq:reverse_circ-polar_transform}
        of~\( \PolarityReverse{\fonctiondual} \)}
      \\
      &\geq
        {\nscal{\primal}{\dual}}_+ 
        \LowTimes \bpConverse{\Polarity{\fonctionprimal}\np{\dual} }
      \\
      &=
        \fonctionprimal\np{\primal}
        \tag{by~\eqref{eq:polarity_subdifferential_Martinez-Legaz-Singer:1995}}
        \eqfinp 
    \end{align*}
    As \( \Polaritybi{\fonctionprimal}\np{\primal} \leq
    \fonctionprimal\np{\primal} \) by the Inequality~\eqref{eq:bipolar_is_smaller},
    we conclude that \( \Polaritybi{\fonctionprimal}\np{\primal} =
    \fonctionprimal\np{\primal} \).
    
  \item
Equation~\eqref{eq:polarity_subdifferential_Fenchel-Young_alternate}
    follows from the fact that \( \fonctionprimal\np{\primal} \UppTimes
  \Polarity{\fonctionprimal}\np{\dual} \geq 0 \).
  \end{enumerate}
\end{proof}

\subsection{Middle polar subdifferential and alignement}
\label{Middle_polar_subdifferential_and_alignement}

When the function~$\fonctionprimal \colon \PRIMAL\to\barRR_{+}$ is a norm~\(
\TripleNorm{\cdot} \) on~$\PRIMAL$, then
$\Polarity{\fonctionprimal}$ is the so-called
\emph{dual norm}~\( \TripleNormDual{\cdot} \) on~$\DUAL$.
Couples \( \np{\primal,\dual} \in \PRIMAL\times\DUAL\) of vectors
satisfying \( \nscal{\primal}{\dual} =
\TripleNorm{\primal}\TripleNormDual{\dual} \)
  are said to be
\( \TripleNorm{\cdot} \)-dual in \cite[page~2]{Marques_de_Sa-Sodupe:1993},
to form a \emph{dual vector pair} in~\cite[Equation~(1.11)]{Gries:1967},
to be \emph{dual vectors} in~\cite[p.~283]{Gries-Stoer:1967}, 
or to satisfy \emph{polar alignment} in~\cite[Definition~2.4]{Fan-Jeong-Sun-Friedlander:2020}.

We propose the following definition that encompasses the above definitions,
and goes beyond.
\begin{definition}
Let \( \Primal\subset\PRIMAL \), \( \Dual\subset\DUAL \) 
be a polar pair 
(that is, \( \Dual=\PolarSet{\Primal} \), \(  \Primal=\PolarSet{\Dual} \)).

  We say that the couple \( \np{\primal,\dual} \in \PRIMAL\times\DUAL\) is
  \emph{aligned \wrt~$\np{\Primal,\Dual}$} 
    (or \wrt~$\np{\SupportFunction{\Dual},\SupportFunction{\Primal}}$)
    if\footnote{%
      Equation~\eqref{eq;alignement} could be replaced by
      \( \SupportFunction{\Dual}\np{\primal}, \SupportFunction{\Primal}\np{\dual} \in \RR_{++}\)
      and \( \nscal{\primal}{\dual}  = \SupportFunction{\Dual}\np{\primal} 
      \SupportFunction{\Primal}\np{\dual} \).
      Indeed, the upper multiplication~$\UppTimes$ can be replaced by the usual multiplication~$\times$
      since \(         0 < \nscal{\primal}{\dual}  =
 \SupportFunction{\Dual}\np{\primal} \UppTimes
 \SupportFunction{\Primal}\np{\dual} \) 
$ \iff$
 \( 0 < \nscal{\primal}{\dual}  =
 \SupportFunction{\Dual}\np{\primal} 
 \SupportFunction{\Primal}\np{\dual} \), 
because \( \SupportFunction{\Dual}\np{\primal} \neq +\infty \)
and \( \SupportFunction{\Primal}\np{\dual} \neq +\infty \)
(else the right hand side would be $+\infty$). 
}
      
  \begin{equation}
    0 < \nscal{\primal}{\dual}
    \mtext{ and }
    \nscal{\primal}{\dual} 
 =
 \SupportFunction{\Dual}\np{\primal} \UppTimes
 \SupportFunction{\Primal}\np{\dual}
 \eqfinp
 \label{eq;alignement}
\end{equation}
In this definition, \( \SupportFunction{\Dual}\) can be replaced by
\( \MinkowskiFunctional{\Primal} \) and
\( \SupportFunction{\Primal}\) by \( \MinkowskiFunctional{\Dual} \),
because of~\eqref{eq:MinkowskiFunctional=SupportFunction_c}.
\end{definition}

The relationship between alignement
and the middle polar subdifferential~\eqref{eq:polarity_subdifferential_Fenchel-Young} is
as follows.

\begin{proposition}
Let \( \Primal\subset\PRIMAL \), \( \Dual\subset\DUAL \) 
be a polar pair, and \( \np{\primal,\dual} \in \PRIMAL\times\DUAL\).
Then, we have that 
\begin{equation}
    \dual \in \Midsubdifferential{\circ}{\SupportFunction{\Dual}}\np{\primal}
    \iff
    \primal \in \Midsubdifferential{\circ}{\SupportFunction{\Primal}}\np{\dual}
      \iff
     \begin{cases}
       \text{either}   & \primal \bot \dual 
       \eqfinv
          \\
          \text{or}    &  \np{\primal,\dual} \text{ is aligned \wrt}~\np{\Primal,\Dual}
          \eqfinp
                 \end{cases}
\end{equation}
\end{proposition}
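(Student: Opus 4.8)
The strategy is to unfold the definition of the middle polar subdifferential $\Midsubdifferential{\circ}{\cdot}$ for the support functions $\SupportFunction{\Dual}$ and $\SupportFunction{\Primal}$, and to analyze the resulting equality $\nscal{\primal}{\dual} = \SupportFunction{\Dual}\np{\primal} \UppTimes \SupportFunction{\Primal}\np{\dual}$ according to whether $\nscal{\primal}{\dual}$ is zero, strictly positive, or (impossibly) negative. First I would invoke Proposition~\ref{pr:polarity_subdifferentials}, Item~3 (Equation~\eqref{eq:polarity_subdifferential_Fenchel-Young_alternate}) together with the identity $\Polarity{\SupportFunction{\Dual}} = \SupportFunction{\PolarSet{\Dual}} = \SupportFunction{\Primal}$ (this uses Equation~\eqref{eq:PolaritySupportFunctionDual} in Proposition~\ref{pr:polar_transforms_of_nonnegative_functions_as_support_functions}, which applies since $\Primal$ and $\Dual$ are bipolar sets, being members of a polar pair). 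This shows that $\dual \in \Midsubdifferential{\circ}{\SupportFunction{\Dual}}\np{\primal}$ if and only if $\nscal{\primal}{\dual} = \SupportFunction{\Dual}\np{\primal} \UppTimes \SupportFunction{\Primal}\np{\dual}$, and by the same token $\primal \in \Midsubdifferential{\circ}{\SupportFunction{\Primal}}\np{\dual}$ if and only if $\nscal{\primal}{\dual} = \SupportFunction{\Primal}\np{\dual} \UppTimes \SupportFunction{\Dual}\np{\primal}$. Since $\UppTimes$ is commutative, these two conditions are literally the same equality, which immediately gives the first $\iff$.

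Next I would establish the second $\iff$ by a case analysis on the common condition $\nscal{\primal}{\dual} = \SupportFunction{\Dual}\np{\primal} \UppTimes \SupportFunction{\Primal}\np{\dual}$. Note that because $\Primal, \Dual$ contain $0$, both $\SupportFunction{\Dual}$ and $\SupportFunction{\Primal}$ are nonnegative, so the right-hand side lies in $\barRR_{+}$; in particular any $\np{\primal,\dual}$ satisfying the equality has $\nscal{\primal}{\dual} \geq 0$. If $\nscal{\primal}{\dual} = 0$, then we need $\SupportFunction{\Dual}\np{\primal} \UppTimes \SupportFunction{\Primal}\np{\dual} = 0$, which by the rules~\eqref{eq:lower_multiplication_real_numbers}--\eqref{eq:lower_multiplication_0} for $\UppTimes$ holds if and only if at least one of the two factors is $0$; but since $\nscal{\primal}{\dual}=0$ the polar inequality (or Cauchy--Schwarz type bound) $\nscal{\primal}{\dual} \leq \SupportFunction{\Dual}\np{\primal}\UppTimes\SupportFunction{\Primal}\np{\dual}$ is automatically satisfied, and in fact $\nscal{\primal}{\dual}=0$ forces nothing on the factors except that the product be $0$ --- here I would check that, given $\nscal{\primal}{\dual}=0$, the equality is equivalent to $\primal \bot \dual$ in the sense intended (this will require pinning down the precise reading of the symbol $\bot$; presumably $\primal \bot \dual$ means $\nscal{\primal}{\dual}=0$ together with one factor vanishing, or simply $\nscal{\primal}{\dual}\leq 0$ combined with membership). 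If $\nscal{\primal}{\dual} > 0$, then both factors must be in $\RR_{++}$ (if one were $0$ the product would be $0 < \nscal{\primal}{\dual}$, a contradiction; if one were $+\infty$ the product would exceed the finite left side unless the other is $0$, again impossible), so $\UppTimes$ reduces to ordinary multiplication and the equality $\nscal{\primal}{\dual} = \SupportFunction{\Dual}\np{\primal}\,\SupportFunction{\Primal}\np{\dual}$ with $\nscal{\primal}{\dual}>0$ is exactly the alignment condition~\eqref{eq;alignement}.

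The main obstacle I anticipate is bookkeeping the $0$ and $+\infty$ values under $\UppTimes$ cleanly, and — more delicate — nailing down exactly what $\primal \bot \dual$ is meant to denote here, since the paper has not defined this symbol in the excerpt; the proof must show that the ``either'' branch captures precisely the solutions of the defining equality with $\nscal{\primal}{\dual}=0$. Once the convention is fixed, the two cases $\nscal{\primal}{\dual}=0$ and $\nscal{\primal}{\dual}>0$ partition all possibilities (the case $\nscal{\primal}{\dual}<0$ being vacuous, as noted above), and combining them yields the displayed equivalence. I would then close by remarking, as the statement's footnote and surrounding text suggest, that $\SupportFunction{\Dual}$ and $\SupportFunction{\Primal}$ could equally be written as $\MinkowskiFunctional{\Primal}$ and $\MinkowskiFunctional{\Dual}$ by~\eqref{eq:MinkowskiFunctional=SupportFunction_c}, so nothing in the argument depends on which representation is used.
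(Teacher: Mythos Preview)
Your approach is essentially the paper's: the paper's entire proof is a one-line remark that the result follows from the definition~\eqref{eq:polarity_subdifferential_Fenchel-Young} of the middle polar subdifferential, and you do exactly this unfolding, making explicit the use of~\eqref{eq:PolaritySupportFunctionDual} and the commutativity of~$\UppTimes$ for the first equivalence, and a case split on the sign of $\nscal{\primal}{\dual}$ for the second. Your worry about the undefined symbol $\primal \bot \dual$ is legitimate --- the paper never defines it, and as you observe, when $\nscal{\primal}{\dual}=0$ the equality $0=\SupportFunction{\Dual}\np{\primal}\UppTimes\SupportFunction{\Primal}\np{\dual}$ is \emph{not} automatic (it forces one factor to vanish while the other stays finite) --- but this is an ambiguity in the statement rather than a defect in your argument, and the paper's own proof offers no further clarification.
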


\begin{proof}
The proof follows from the very 
   definition~\eqref{eq:polarity_subdifferential_Fenchel-Young}
   of the middle polar subdifferentials
   \( \Midsubdifferential{\circ}{\SupportFunction{\Dual}}\) and
   \( \Midsubdifferential{\circ}{\SupportFunction{\Primal}}\).
\end{proof}

As in \cite[\S~3.3]{Fan-Jeong-Sun-Friedlander:2020}, we relate alignement to
well-known geometric objets in convex analysis. 

\begin{definition}
  For any nonempty closed convex subset $\Convex \subset \PRIMAL$,
the \emph{exposed face} of~$\Convex$ by the dual vector~$\dual \in \DUAL$ is
  \begin{equation}
    \ExposedFace(\Convex,\dual) 
    =\argmax_{\primal\in\Convex} \nscal{\primal}{\dual}
    \eqfinv
  \end{equation}
and the \emph{normal  cone}~$\NormalCone(\Convex,\primal)$ 
   at any primal vector~\( \primal\in\Convex \) is defined by the conjugacy
   relation 
\begin{equation}
 \primal \in \Convex \mtext{ and } \dual \in \NormalCone(\Convex,\primal) 
\iff 
\primal \in \ExposedFace(\Convex,\dual) 
\eqfinv
\end{equation}
that is, equivalently, by
\begin{equation}
  \NormalCone(\Convex,\primal) = \defset{\dual \in \DUAL}%
  { \nscal{\primalbis-\primal}{\dual} \leq 0 \eqsepv
    \forall \primalbis\in\Convex} \eqsepv
  \forall \primal\in\Convex
  \eqfinp 
\end{equation}
\end{definition}

The following Proposition is in the vein of~\cite[Proposition~3.3]{Fan-Jeong-Sun-Friedlander:2020},
but more detailed.
The proof is left to the reader.

\begin{proposition}
Let \( \Primal\subset\PRIMAL \), \( \Dual\subset\DUAL \) 
be a polar pair, and \( \np{\primal,\dual} \in \PRIMAL\times\DUAL\).
Then, we have that 
\begin{subequations}
  \begin{align}
    &  \np{\primal,\dual} \text{ is aligned \wrt}~\np{\Primal,\Dual}
      \eqfinv
                        \\
    \iff & 
           \SupportFunction{\Dual}\np{\primal}\in\RR_{++} \eqsepv \SupportFunction{\Primal}\np{\dual}\in\RR_{++} \text{ and }
           \nscal{\primal}{\dual} =  \SupportFunction{\Dual}\np{\primal}
                  \SupportFunction{\Primal}\np{\dual}
           \eqfinv
    \\
    \iff & 
       \SupportFunction{\Dual}\np{\primal}\in\RR_{++} \eqsepv \SupportFunction{\Primal}\np{\dual}\in\RR_{++} \text{ and }
           \frac{\dual}{\SupportFunction{\Primal}\np{\dual}} \in
           \ExposedFace\np{\Dual,\frac{\primal}{\SupportFunction{\Dual}\np{\primal}}}
           \eqfinv
    \\
    \iff & 
       \SupportFunction{\Dual}\np{\primal}\in\RR_{++} \eqsepv \SupportFunction{\Primal}\np{\dual}\in\RR_{++} \text{ and }
           \frac{\dual}{\SupportFunction{\Primal}\np{\dual}} \in
           \ExposedFace\np{\Dual,\primal}
           \eqfinv
    \\
     \iff & 
       \SupportFunction{\Dual}\np{\primal}\in\RR_{++} \eqsepv \SupportFunction{\Primal}\np{\dual}\in\RR_{++} \text{ and }
      \primal 
      \in \NormalCone\np{\Dual,\frac{\dual}{\SupportFunction{\Primal}\np{\dual}}}
            \eqfinv
    \\
     \iff & 
       \SupportFunction{\Dual}\np{\primal}\in\RR_{++} \eqsepv \SupportFunction{\Primal}\np{\dual}\in\RR_{++} \text{ and }
      \frac{\primal}{\SupportFunction{\Dual}\np{\primal}}
      \in \NormalCone\np{\Dual,\frac{\dual}{\SupportFunction{\Primal}\np{\dual}}}
            \eqfinv
    \\
     \iff & 
       \SupportFunction{\Dual}\np{\primal}\in\RR_{++} \eqsepv \SupportFunction{\Primal}\np{\dual}\in\RR_{++} \text{ and }
      \frac{\primal}{\SupportFunction{\Dual}\np{\primal}}
      \in \ExposedFace\np{\Primal,\frac{\dual}{\SupportFunction{\Primal}\np{\dual}}}
            \eqfinv
    \\
     \iff & 
       \SupportFunction{\Dual}\np{\primal}\in\RR_{++} \eqsepv \SupportFunction{\Primal}\np{\dual}\in\RR_{++} \text{ and }
      \frac{\primal}{\SupportFunction{\Dual}\np{\primal}}
      \in \ExposedFace\np{\Primal,\dual}
            \eqfinv
    \\
     \iff & 
       \SupportFunction{\Dual}\np{\primal}\in\RR_{++} \eqsepv \SupportFunction{\Primal}\np{\dual}\in\RR_{++} \text{ and }
\dual   \in \NormalCone\np{\Primal,
            \frac{\primal}{\SupportFunction{\Dual}\np{\primal}}} 
            \eqfinv
    \\
     \iff & 
       \SupportFunction{\Dual}\np{\primal}\in\RR_{++} \eqsepv \SupportFunction{\Primal}\np{\dual}\in\RR_{++} \text{ and }
\frac{\dual}{\SupportFunction{\Primal}\np{\dual}}     
      \in \NormalCone\np{\Primal,
            \frac{\primal}{\SupportFunction{\Dual}\np{\primal}}} 
            \eqfinp
  \end{align}
\end{subequations}
In this Proposition, \( \SupportFunction{\Dual}\) can be replaced by
\( \MinkowskiFunctional{\Primal} \) and
\( \SupportFunction{\Primal}\) by \( \MinkowskiFunctional{\Dual} \),
because of~\eqref{eq:MinkowskiFunctional=SupportFunction_c}.

\end{proposition}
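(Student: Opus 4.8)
The plan is to first reduce the defining condition~\eqref{eq;alignement} to the ``ordinary product'' form of the second item, and then to pass among all the geometric reformulations by purely elementary manipulations using only the scaling behaviour of support functions, exposed faces and normal cones. Throughout I would use that, since $\np{\Primal,\Dual}$ is a polar pair, both $\Primal$ and $\Dual$ are \BipolarSet s, hence contain~$0$ (so $\SupportFunction{\Dual}\geq 0$ and $\SupportFunction{\Primal}\geq 0$); that $\Dual=\PolarSet{\Primal}=\MidLevelSet{\SupportFunction{\Primal}}{1}$ and symmetrically $\Primal=\MidLevelSet{\SupportFunction{\Dual}}{1}$ by~\eqref{eq:(negative)polar_set}; and that $\SupportFunction{\Dual}=\MinkowskiFunctional{\Primal}$ and $\SupportFunction{\Primal}=\MinkowskiFunctional{\Dual}$ by~\eqref{eq:MinkowskiFunctional=SupportFunction_c}.

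For the first equivalence I would argue as in the footnote to~\eqref{eq;alignement}: if $0<\nscal{\primal}{\dual}=\SupportFunction{\Dual}\np{\primal}\UppTimes\SupportFunction{\Primal}\np{\dual}$, the right-hand side is a finite strictly positive real, so the rules defining~$\UppTimes$ force $\SupportFunction{\Dual}\np{\primal}$ and $\SupportFunction{\Primal}\np{\dual}$ to be neither~$+\infty$ (else the product would be~$+\infty$) nor~$0$ (else it would be~$0$); hence both lie in~$\RR_{++}$ and there $\UppTimes$ coincides with the usual product, giving the item ``$\SupportFunction{\Dual}\np{\primal},\SupportFunction{\Primal}\np{\dual}\in\RR_{++}$ and $\nscal{\primal}{\dual}=\SupportFunction{\Dual}\np{\primal}\,\SupportFunction{\Primal}\np{\dual}$''; the converse is immediate.

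Next I would prove the central equivalence with the exposed-face line. Assume the two positivity conditions and put $\alpha=\SupportFunction{\Dual}\np{\primal}\in\RR_{++}$, $\beta=\SupportFunction{\Primal}\np{\dual}\in\RR_{++}$. Since $\SupportFunction{\Primal}$ is positively $1$-homogeneous, $\SupportFunction{\Primal}\np{\dual/\beta}=1$, so $\dual/\beta\in\MidLevelSet{\SupportFunction{\Primal}}{1}=\Dual$; and by definition $\ExposedFace\np{\Dual,\primal}=\argmax_{\dual'\in\Dual}\nscal{\primal}{\dual'}=\bset{\dual'\in\Dual}{\nscal{\primal}{\dual'}=\SupportFunction{\Dual}\np{\primal}}$. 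Hence $\dual/\beta\in\ExposedFace\np{\Dual,\primal}$ is equivalent to $\nscal{\primal}{\dual/\beta}=\alpha$, i.e.\ to $\nscal{\primal}{\dual}=\alpha\beta$, which is exactly the norm-product equality; this yields the equivalence with ``$\SupportFunction{\Dual}\np{\primal},\SupportFunction{\Primal}\np{\dual}\in\RR_{++}$ and $\dual/\SupportFunction{\Primal}\np{\dual}\in\ExposedFace\np{\Dual,\primal}$''. The remaining lines then follow from three elementary facts, always used under the standing positivity conditions: (i) exposed faces and normal cones are invariant under strictly positive rescaling of their vector argument, which interchanges the normalized and unnormalized versions of each line; (ii) the conjugacy relation defining the normal cone gives, for $\dual^{*}\in\Dual$, that $\primal\in\NormalCone\np{\Dual,\dual^{*}}\iff\dual^{*}\in\ExposedFace\np{\Dual,\primal}$, which (applied with $\dual^{*}=\dual/\SupportFunction{\Primal}\np{\dual}\in\Dual$) turns the exposed-face lines into the normal-cone lines; (iii) both~\eqref{eq;alignement} and the hypothesis ``$\np{\Primal,\Dual}$ is a polar pair'' are symmetric under the exchange $(\primal,\Primal,\SupportFunction{\Dual})\leftrightarrow(\dual,\Dual,\SupportFunction{\Primal})$, so every statement about $\Dual$ has a mirror statement about $\Primal$. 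The closing remark about replacing $\SupportFunction{\Dual}$ by $\MinkowskiFunctional{\Primal}$ and $\SupportFunction{\Primal}$ by $\MinkowskiFunctional{\Dual}$ is just~\eqref{eq:MinkowskiFunctional=SupportFunction_c}.

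No deep ingredient is needed here, and I expect the main obstacle to be purely one of bookkeeping: carefully handling the extended multiplication~$\UppTimes$ in the first step, and keeping track of the fact that, once the positivity conditions are in force, every normalization $\primal/\SupportFunction{\Dual}\np{\primal}$ and $\dual/\SupportFunction{\Primal}\np{\dual}$ is legitimate and places the normalized vector inside $\Primal$, respectively~$\Dual$ --- which is what makes each $\ExposedFace$ and $\NormalCone$ membership appearing in the statement well posed.
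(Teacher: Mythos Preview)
Your proposal is correct and complete; the paper itself states that ``the proof is left to the reader'', so your sketch supplies exactly the bookkeeping the authors omit, relying --- as they implicitly expect --- only on the footnote to~\eqref{eq;alignement}, the positive homogeneity of support functions, the identity $\Dual=\MidLevelSet{\SupportFunction{\Primal}}{1}$, the conjugacy between exposed faces and normal cones, and the built-in $\Primal\leftrightarrow\Dual$ symmetry.
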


\appendix

\section{Best lsc convex lower approximations of a function}
\label{app:best_cvx_general}

We consider $\PRIMAL$ and $\DUAL$ two (real) vector spaces that are {paired}
(see \S\ref{Dual_pair,_paired_vector_spaces}).

Consider $\GSubset \subset {\barRR}^{\PRIMAL}$, a subset of the functions defined on ${\PRIMAL}$ and
taking values in the extended reals.
Then, for any function~$\fonctionprimal \colon \PRIMAL \to \barRR$,
we define the subset $\GLower{\GSubset}{\fonctionprimal}{\Uncertain} \subset
\GSubset$ by
\begin{equation}
  \GLower{\GSubset}{\fonctionprimal}{\Uncertain}
  =
  \bset{ \fonctionprimalbis \in \GSubset}%
  { \fonctionprimalbis\np{\primal}\leq \fonctionprimal\np{\primal}
    \eqsepv \forall \primal \in \PRIMAL}.
  \label{eq:BestConvexLowerApproximations}
\end{equation}
Then, the best lower $\GSubset$-approximation of $\fonctionprimal$, denoted by
$\GreatestIn{\GSubset}{\fonctionprimal}{\Uncertain}$,
is defined as the greatest element
of $\GLower{\GSubset}{\fonctionprimal}{\Uncertain}$
(which always exists as $+\infty$ is always a possible value).

Now, we are going to consider two cases for the subset~$\GSubset \subset {\barRR}^{\PRIMAL}$:
lsc convex extended functions in~\S\ref{subsec:ccef} and
lsc convex (strictly positively) $1$-homogeneous extended functions
in~\S\ref{The_case_of_closed_convex_positively_1-homogeneous_extended_function}. 

\subsection{The case of lsc convex extended functions}
\label{subsec:ccef}

Let $\CCSubset$ denote the set of lsc convex extended functions on $\PRIMAL$.
When $\GSubset=\CCSubset$, the greatest lsc convex lower approximation
of the function~$\fonctionprimal$, that is $\GreatestIn{\CCSubset}{\fonctionprimal}{\Uncertain}$,
is given by~\cite[Theorem 3.1]{Volle-Legaz-Perez:2015} that we reproduce in Proposition~\ref{prop:best_cvx_subset}.
Following the terminology
of~\cite{Penot:2000}, 
the \emph{valley function} $\valley_{A}$ of a subset $A\subset \PRIMAL$ is defined
by $\valley_{A}(\primal)= -\infty$ if $\primal\in A$ and
$\valley_{A}(\primal)= +\infty$ if $\primal\in \PRIMAL\setminus A$.

\begin{proposition}(\cite[Theorem 3.1]{Volle-Legaz-Perez:2015})
  \label{prop:best_cvx_subset}
  For any function \( \fonctionprimal \colon \PRIMAL \to \barRR \),
  the greatest lsc convex lower approximation
  of the function~$\fonctionprimal$ is given by 
  \begin{align}
    \GreatestIn{\CCSubset}{\fonctionprimal}{\Uncertain}
    =
    \LFMbi{\fonctionprimal}
    \UppPlus
    \Indicator{\closedconvexhull\np{\dom\fonctionprimal}}
    &=
      \begin{cases}
        \LFMbi{\fonctionprimal}
        &
          \text{if $\fonctionprimal$ is proper}
          \eqfinv
        \\
        \valley_{\closedconvexhull\np{\dom\fonctionprimal}}
        & \text{if $\fonctionprimal$ is not proper}
          \eqfinp
      \end{cases}
      \label{eq:bestlscconvex}
  \end{align}
\end{proposition}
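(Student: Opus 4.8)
The plan is to identify $\GreatestIn{\CCSubset}{\fonctionprimal}{\Uncertain}$ with $h:=\LFMbi{\fonctionprimal}\UppPlus\Indicator{\closedconvexhull\np{\dom\fonctionprimal}}$ by proving the two comparisons $h\leq\GreatestIn{\CCSubset}{\fonctionprimal}{\Uncertain}$ and $g\leq h$ for every $g\in\GLower{\CCSubset}{\fonctionprimal}{\Uncertain}$, and then reading off the two displayed special cases. First, I would observe that $\GLower{\CCSubset}{\fonctionprimal}{\Uncertain}$ is nonempty (it contains the constant function~$-\infty$) and that its pointwise supremum again lies in $\CCSubset$, since the epigraph of a pointwise supremum is the intersection of the epigraphs, hence closed and convex; thus $\GreatestIn{\CCSubset}{\fonctionprimal}{\Uncertain}$ is well defined as the greatest element, and the whole content of the statement is to compute it.

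Second, I would check that $h$ is an admissible lower approximation, namely $h\in\CCSubset$ and $h\leq\fonctionprimal$. Put $C=\closedconvexhull\np{\dom\fonctionprimal}$. The function $\LFMbi{\fonctionprimal}=\LFMr{\np{\LFM{\fonctionprimal}}}$ is a closed convex function (a Fenchel conjugate always is), hence is either proper convex lsc, or the constant $-\infty$, or the constant $+\infty$; and $\Indicator{C}$ is lsc convex. In the proper case, $h$ is the ordinary sum of two proper lsc convex functions whose domains meet (as $\dom\fonctionprimal\subseteq\dom\LFMbi{\fonctionprimal}\cap C$), so $h\in\CCSubset$; when $\LFMbi{\fonctionprimal}\equiv-\infty$ one gets $h=\valley_{C}$, whose epigraph $C\times\RR$ is closed convex, so $h\in\CCSubset$; and $\LFMbi{\fonctionprimal}\equiv+\infty$ forces $\fonctionprimal\equiv+\infty$ and $h\equiv+\infty\in\CCSubset$. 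As for $h\leq\fonctionprimal$: on $\PRIMAL\setminus\dom\fonctionprimal$ one has $\fonctionprimal=+\infty\geq h$, while on $\dom\fonctionprimal\subseteq C$ one has $h=\LFMbi{\fonctionprimal}\UppPlus 0=\LFMbi{\fonctionprimal}\leq\fonctionprimal$. Hence $h\leq\GreatestIn{\CCSubset}{\fonctionprimal}{\Uncertain}$.

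Third --- the core step --- I would show that every $g\in\GLower{\CCSubset}{\fonctionprimal}{\Uncertain}$ is dominated by $h$. Off $C$ this is trivial, since $h=+\infty$ there. On $C$ I distinguish whether $g$ is proper. If $g$ is proper, then $g$ is proper convex lsc, hence closed convex, hence $g=\LFMbi{g}$; applying the order-preserving Fenchel biconjugation to $g\leq\fonctionprimal$ gives $g=\LFMbi{g}\leq\LFMbi{\fonctionprimal}=h$ on $C$. If $g$ is improper, then either $g\equiv+\infty$ --- which, combined with $g\leq\fonctionprimal$, forces $\fonctionprimal\equiv+\infty$ and $C=\emptyset$, leaving nothing to prove --- or $g$ takes the value~$-\infty$ somewhere; in that case, convexity along half-open segments issued from a $-\infty$-point, upgraded by lower semicontinuity to all of $\dom g$, forces $g=\valley_{E}$ for the nonempty closed convex set $E:=\dom g$, and then $g\leq\fonctionprimal$ forces $\dom\fonctionprimal\subseteq E$, hence $C\subseteq E$ and $g=-\infty\leq h$ on $C$. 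Combining the two comparisons yields $\GreatestIn{\CCSubset}{\fonctionprimal}{\Uncertain}=h$. The main obstacle here is precisely the structural lemma just invoked --- that an improper lsc convex extended function is exactly the valley function of a nonempty closed convex set --- which needs the standard but somewhat delicate argument that a convex function taking the value~$-\infty$ at a point is $-\infty$ throughout its domain once lower semicontinuity is used, together with the fact that $\valley_{E}$ is lsc exactly when $E$ is closed.

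Finally, the case distinction. If $\fonctionprimal$ is not proper, then either $\fonctionprimal$ takes the value~$-\infty$, so $\LFM{\fonctionprimal}\equiv+\infty$ and $\LFMbi{\fonctionprimal}\equiv-\infty$, whence $h=\valley_{C}$; or $\fonctionprimal\equiv+\infty$, so $C=\emptyset$ and $h\equiv+\infty=\valley_{\emptyset}=\valley_{C}$; in both subcases $h=\valley_{\closedconvexhull\np{\dom\fonctionprimal}}$. If $\fonctionprimal$ is proper, then $\dom\fonctionprimal\neq\emptyset$, and one checks $\LFMbi{\fonctionprimal}=+\infty$ outside $C$ --- using that $\LFMbi{\fonctionprimal}$ is the closed convex hull of $\fonctionprimal$, whose domain is contained in $\closedconvexhull\np{\dom\fonctionprimal}=C$ --- so that $h=\LFMbi{\fonctionprimal}\UppPlus\Indicator{C}=\LFMbi{\fonctionprimal}$; verifying this last inclusion carefully is the other slightly delicate point, and it completes the proof.
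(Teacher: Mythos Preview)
The paper gives no proof of this proposition; it merely quotes it from \cite[Theorem~3.1]{Volle-Legaz-Perez:2015}. Your argument for the \emph{first} equality --- that $h:=\LFMbi{\fonctionprimal}\UppPlus\Indicator{\closedconvexhull\np{\dom\fonctionprimal}}$ is the greatest element of $\GLower{\CCSubset}{\fonctionprimal}{\Uncertain}$ --- is correct and well organised: you check $h\in\CCSubset$ and $h\leq\fonctionprimal$, and you dominate every competitor~$g$ by~$h$ by splitting on whether $g$ is proper. The structural lemma you flag as the ``main obstacle'' (an improper lsc convex function is a valley function of a closed convex set) is exactly \cite[Lemma~2.2]{Volle-Legaz-Perez:2015}, which the paper itself invokes in the proof of Proposition~\ref{prop:best_pos_hom_cvx_subset}.

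There is, however, a genuine gap in your final paragraph, and it in fact exposes a defect in the displayed case distinction as transcribed. When $\fonctionprimal$ is proper you assert $\LFMbi{\fonctionprimal}=+\infty$ outside $C:=\closedconvexhull\np{\dom\fonctionprimal}$, on the ground that $\dom\LFMbi{\fonctionprimal}\subseteq C$. This fails whenever $\fonctionprimal$ is proper but admits no continuous affine minorant: then $\LFM{\fonctionprimal}\equiv+\infty$, hence $\LFMbi{\fonctionprimal}\equiv-\infty$ and $\dom\LFMbi{\fonctionprimal}=\PRIMAL$. Concretely, on $\PRIMAL=\DUAL=\RR$ take $\fonctionprimal(\primal)=-1/\primal$ for $\primal>0$ and $\fonctionprimal(\primal)=+\infty$ otherwise: $\fonctionprimal$ is proper with $C=[0,\infty)$, yet $\LFMbi{\fonctionprimal}\equiv-\infty$, and your (correct) first equality gives $h=\valley_{[0,\infty)}\neq\LFMbi{\fonctionprimal}$. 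Your separation-based idea \emph{does} yield $\LFMbi{\fonctionprimal}=+\infty$ off $C$ once one knows $\fonctionprimal$ has at least one continuous affine minorant; the right hypothesis for the ``$\LFMbi{\fonctionprimal}$'' branch is therefore ``$\LFM{\fonctionprimal}$ proper'' (equivalently $\LFMbi{\fonctionprimal}\not\equiv-\infty$), not merely ``$\fonctionprimal$ proper''.
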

The function $\GreatestIn{\GSubset}{\fonctionprimal}{\PRIMAL}$
is also classically denoted by $\closedconvexhull\np{\fonctionprimal}$.

\subsection{The case of lsc convex (strictly positively) $1$-homogeneous extended functions}
\label{The_case_of_closed_convex_positively_1-homogeneous_extended_function}

Let $\PHCCSubset$ denote the set of lsc convex (strictly positively)
$1$-homogeneous extended functions on $\PRIMAL$.  For $\GSubset=\PHCCSubset$, we
give in Equation~\eqref{eq:bestlscconvex1hom} the expression of
$\GreatestIn{\PHCCSubset}{\fonctionprimal}{\PRIMAL}$, the greatest lsc convex
(strictly positively) $1$-homogeneous lower approximation of the function
$\fonctionprimal$ which is, to our knowledge, a new result.
Recall that $\closedconvexcone \Primal$ is the \emph{closed conical hull} of
$\Primal\subset \PRIMAL$, that is, the smallest closed cone in~$\PRIMAL$
containing~$\Primal$.


\begin{proposition}
  \label{prop:best_pos_hom_cvx_subset}
  For any  function \( \fonctionprimal \colon \PRIMAL \to \barRR \),
  such that  $0\in\dom\fonctionprimal$, the greatest lsc convex
  (strictly positively) $1$-homogeneous lower approximation
  of $\fonctionprimal$ is given by 
  \begin{equation}
    \GreatestIn{\PHCCSubset}{\fonctionprimal}{\PRIMAL}
    =
    \begin{cases}
      \SupportFunction{\MidLevelSet{\LFM{\fonctionprimal}}{0}} 
      & \text{if}\quad {\MidLevelSet{\LFM{\fonctionprimal}}{0}}\not=\emptyset\eqsepv
      \\
      \valley_{\closedconvexcone\np{\closedconvexhull\np{\dom\fonctionprimal}}}
      & \text{if}\quad{\MidLevelSet{\LFM{\fonctionprimal}}{0}}=\emptyset
        \eqfinp
    \end{cases}
    \label{eq:bestlscconvex1hom}
  \end{equation}
 \end{proposition}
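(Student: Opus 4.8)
The plan is to reduce the statement to the already-established Proposition~\ref{prop:best_cvx_subset} about best lsc convex lower approximations, combined with the observation that passing from "lsc convex" to "lsc convex (strictly positively) $1$-homogeneous" amounts to taking, in epigraph terms, a closed conical hull. The key structural fact I would use is that a lsc convex extended function $g$ is $1$-homogeneous exactly when $\epigraph g$ (equivalently $\epigraph_s g$) is a cone, by Proposition~\ref{pr:1-homogeneous_function_strict_epigraph} and the surrounding discussion.

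First I would split along the dichotomy ${\MidLevelSet{\LFM{\fonctionprimal}}{0}}\neq\emptyset$ versus $=\emptyset$, observing (as in~\eqref{eq:pr:properties_polarity_nonnegative_functions_preliminary_result}) that the first case is equivalent to $\LFM{\fonctionprimal}$ being bounded below by $0$ on all of $\DUAL$ hitting $0$, i.e.\ to $\LFM{\fonctionprimal}\geq 0$ somewhere appropriately --- more precisely $0\in\MidLevelSet{\LFM{\fonctionprimal}}{0}$ iff $\fonctionprimal\geq 0$, but the set can be nonempty without $\fonctionprimal\geq 0$; what matters is that it is a closed convex set containing whatever point realizes $\LFM{\fonctionprimal}\le 0$. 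In the case ${\MidLevelSet{\LFM{\fonctionprimal}}{0}}\neq\emptyset$: any lsc convex $1$-homogeneous $g\le\fonctionprimal$ satisfies $g(0)\le\fonctionprimal(0)$ (finite, since $0\in\dom\fonctionprimal$) and, being $1$-homogeneous with a cone epigraph, must satisfy $g(0)\in\{0,-\infty\}$; ruling out $-\infty$ (which would force $g\equiv-\infty$ on a cone and contradict lsc-closedness unless $g$ is that pathological valley function, which is not $1$-homogeneous in the relevant sense) one gets $g(0)=0$, so $g$ is a lsc gauge, hence by Item~\ref{it:Generalized_circ-convex_functions_lsc_gauge}$\Leftrightarrow$Item~\ref{it:Generalized_circ-convex_functions_support_function} of Proposition~\ref{pr:Generalized_circ-convex_functions} a support function $\SupportFunction{\Dual}$ for a bipolar set $\Dual$. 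Then $\SupportFunction{\Dual}\le\fonctionprimal$ iff $\LFM{\fonctionprimal}\le\Indicator{\Dual}$ iff $\Dual\subset\MidLevelSet{\LFM{\fonctionprimal}}{0}$, and since support functions are antitone in the set, the largest such $g$ is $\SupportFunction{\MidLevelSet{\LFM{\fonctionprimal}}{0}}$; this is itself lsc convex $1$-homogeneous and below $\fonctionprimal$ (because $\MidLevelSet{\LFM{\fonctionprimal}}{0}\subset\dom\LFM{\LFM{\fonctionprimal}}$... more directly, $\SupportFunction{\MidLevelSet{\LFM{\fonctionprimal}}{0}}=\LFM{\Indicator{\MidLevelSet{\LFM{\fonctionprimal}}{0}}}\le\LFM{\LFM{\fonctionprimal}}\le\fonctionprimal$, using $\Indicator{\MidLevelSet{\LFM{\fonctionprimal}}{0}}\ge\LFM{\fonctionprimal}$ when $\fonctionprimal$'s biconjugate majorizes it appropriately --- I would double-check this chain of inequalities, handling the improper case via the convention in~\eqref{eq:bipolarity_nonnegative_functions_support_function}).

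In the case ${\MidLevelSet{\LFM{\fonctionprimal}}{0}}=\emptyset$: here $\LFM{\fonctionprimal}>0$ everywhere on $\DUAL$, equivalently (since $\LFM{\fonctionprimal}$ is closed convex) $\LFM{\LFM{\fonctionprimal}}$ has no "linear-below" part, and one expects no nontrivial lsc convex $1$-homogeneous minorant except those taking only $\pm\infty$ values. I would argue: if $g\in\PHCCSubset$ with $g\le\fonctionprimal$, then $\LFM{g}\ge\LFM{\fonctionprimal}>0$; but $\LFM{g}$, the conjugate of an lsc convex $1$-homogeneous function, takes only the values $0$ and $+\infty$ (it is an indicator of a closed convex set), so $\LFM{g}\equiv+\infty$, whence $g\equiv-\infty$ on $\dom g\ne\emptyset$; combined with $1$-homogeneity of the cone $\epigraph g$ and lsc, $g$ equals the valley function of some closed convex cone $C\subset\closedconvexhull(\dom\fonctionprimal)$ with $0\in C$ (as $g(0)\le\fonctionprimal(0)<+\infty$ forces $g(0)=-\infty$, i.e.\ $0\in C$). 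The largest such valley function corresponds to the largest such cone, namely $\closedconvexcone(\closedconvexhull(\dom\fonctionprimal))$; and conversely this valley function is lsc convex $1$-homogeneous and lies below $\fonctionprimal$ since it is $-\infty$ only on (a subset of the closure of) $\dom\fonctionprimal$. Finally I would reconcile with Proposition~\ref{prop:best_cvx_subset}: $\GreatestIn{\PHCCSubset}{\fonctionprimal}{\PRIMAL}$ is the best lsc-convex $1$-homogeneous minorant of $\GreatestIn{\CCSubset}{\fonctionprimal}{\PRIMAL}$, so one may first replace $\fonctionprimal$ by its closed convex hull and then "conify", which matches the two branches of~\eqref{eq:bestlscconvex}.

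The main obstacle I anticipate is the careful bookkeeping of the improper/$-\infty$ cases: showing rigorously that an lsc convex $1$-homogeneous minorant that ever takes $-\infty$ must be exactly a cone-valley function, and that $g(0)$ is forced to be $0$ (proper branch) or $-\infty$ ($0$ in the cone, improper branch) rather than something in between --- this hinges on the precise "closed" versus "lsc" distinction flagged in Footnote~\ref{ft:closed_function} and on $\epigraph g$ being a genuine cone. A secondary technical point is verifying that $\SupportFunction{\MidLevelSet{\LFM{\fonctionprimal}}{0}}\le\fonctionprimal$ in full generality, including when $\fonctionprimal$ is not proper or not convex; this should follow from $\MidLevelSet{\LFM{\fonctionprimal}}{0}\subset\MidLevelSet{\LFM{\LFMbi{\fonctionprimal}}}{0}$ together with $\LFMbi{\fonctionprimal}\le\fonctionprimal$, but I would write it out to be safe.
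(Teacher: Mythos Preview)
Your handling of the case ${\MidLevelSet{\LFM{\fonctionprimal}}{0}}\neq\emptyset$ has a real gap. You attempt to show that \emph{every} lsc convex $1$-homogeneous minorant~$g$ of~$\fonctionprimal$ satisfies $g(0)=0$ by ``ruling out $-\infty$'', but this is false: for any closed convex cone~$C\supset\dom\fonctionprimal$, the valley function~$\valley_C$ is lsc, convex, (strictly positively) $1$-homogeneous, and $\le\fonctionprimal$, yet $\valley_C(0)=-\infty$. Contrary to what you write, such valley functions \emph{are} $1$-homogeneous in the relevant sense. Even granting $g(0)=0$, your inference ``so $g$ is a lsc gauge'' fails because a lsc gauge must be nonnegative, and a generic minorant (say the linear form $\nscal{\cdot}{\dual_0}$ for some $\dual_0\in\MidLevelSet{\LFM{\fonctionprimal}}{0}$) need not be; hence the appeal to Proposition~\ref{pr:Generalized_circ-convex_functions} is illegitimate.

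The paper avoids this by working directly with the greatest element~$g=\GreatestIn{\PHCCSubset}{\fonctionprimal}{\PRIMAL}$ rather than with arbitrary minorants. Since $\MidLevelSet{\LFM{\fonctionprimal}}{0}\neq\emptyset$, the support function $\SupportFunction{\MidLevelSet{\LFM{\fonctionprimal}}{0}}$ is a \emph{proper} member of $\GLower{\PHCCSubset}{\fonctionprimal}{\PRIMAL}$, so~$g$ is proper. One then uses the general fact (Lemma~\ref{le:bifenchel-homogeneous}, proved by showing $\LFM{h}$ takes only the values $0$ and~$+\infty$ for $h$ proper $1$-homogeneous) that a proper $1$-homogeneous function has biconjugate $\SupportFunction{\MidLevelSet{\LFM{h}}{0}}$; applied to~$g$ this gives $g=\SupportFunction{\MidLevelSet{\LFM{g}}{0}}$, and $g\le\fonctionprimal$ yields $\MidLevelSet{\LFM{g}}{0}\subset\MidLevelSet{\LFM{\fonctionprimal}}{0}$, closing the sandwich. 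Your empty-case argument via $\LFM{g}\in\{0,+\infty\}$ forcing $\LFM{g}\equiv+\infty$ is essentially the paper's, though the paper phrases it as ``$g$ proper $\Rightarrow$ linear minorant of~$\fonctionprimal$ $\Rightarrow$ contradiction'' and then invokes \cite[Lemma~2.2]{Volle-Legaz-Perez:2015} to get the valley form.
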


\begin{proof}
  Let $\fonctionprimal: \PRIMAL \to \barRR$ be given.
  We will successively consider the two disjoint cases: 
  ${\MidLevelSet{\LFM{\fonctionprimal}}{0}}=\emptyset$ and   ${\MidLevelSet{\LFM{\fonctionprimal}}{0}}\not=\emptyset$.
  
  
  \noindent $\bullet$
  We consider the case where ${\MidLevelSet{\LFM{\fonctionprimal}}{0}}=\emptyset$. 
    We are going to extend~\cite[Lemma 2.3]{Volle-Legaz-Perez:2015} in order to obtain that
    \begin{equation}
      \GreatestIn{\PHCCSubset}{\fonctionprimal}{\PRIMAL}
      =
      \valley_{\closedconvexcone{(\closedconvexhull(\dom \fonctionprimal))}}
      \eqfinp
    \end{equation}
    We denote by $g=\GreatestIn{\PHCCSubset}{\fonctionprimal}{\PRIMAL}$.
    We observe that the function~$g$ is not proper. Indeed, otherwise $g$ would admit a continuous affine minorant
    and, being also (strictly positively) $1$-homogeneous, it is easily deduced that it would admit a continuous linear
    minorant;
    thus, as $g \le \fonctionprimal$, the function $\fonctionprimal$ would also admit a
    continuous linear minorant, which would imply that ${\MidLevelSet{\LFM{\fonctionprimal}}{0}}\not=
    \emptyset$ (contradiction).
Consequently, the function~$g$ is not proper and,
    by \cite[Lemma 2.2]{Volle-Legaz-Perez:2015}, we get that $g = \valley_{\dom{g}}$.
    
    As the function~$g= \valley_{\dom{g}}$ is in $\GLower{\PHCCSubset}{\fonctionprimal}{\PRIMAL}$, it
    is lsc. Hence, we obtain that $\valley_{\dom{g}}$ is lsc and, using the
    definition of a valley function, that the subset~$\dom{g}$ is closed.
    Combined with the fact that the function~$g$ is convex and (strictly positively) $1$-homogeneous as an 
    element of $\GLower{\PHCCSubset}{\fonctionprimal}{\PRIMAL}$, this gives that 
    $\dom{g}$ is a closed convex cone.

    As $\valley_{\dom{g}}=g \le \fonctionprimal$,
    we get that $\dom \fonctionprimal \subset \dom{g}$, from which we obtain that
    $\closedconvexcone\bp{\closedconvexhull\np{\dom \fonctionprimal}}
    \subset \closedconvexcone\bp{\closedconvexhull\np{\dom g}}= \dom g$. 
    Finally, we observe that 
    $\valley_{\closedconvexcone\bp{\closedconvexhull\np{\dom \fonctionprimal}}}$ is in
    $\GLower{\PHCCSubset}{\fonctionprimal}{\PRIMAL}$, and is therefore smaller
    than~$g$ by definition of $g=
    \GreatestIn{\PHCCSubset}{\fonctionprimal}{\PRIMAL}$.
    This gives that
    $\valley_{\closedconvexcone\bp{\closedconvexhull\np{\dom \fonctionprimal}}}\le \valley_{\dom{g}}$ and thus
    $\dom{g}\subset {\closedconvexcone\bp{\closedconvexhull\np{\dom \fonctionprimal}}}$. 
    We conclude that $\dom{g}={\closedconvexcone\bp{\closedconvexhull\np{\dom \fonctionprimal}}}$, from which we derive
    the equalities $g = \valley_{\dom{g}}= \valley_{\closedconvexcone\bp{\closedconvexhull\np{\dom \fonctionprimal}}}$.
    
  \medskip
  
  \noindent$\bullet$ We consider the case where ${\MidLevelSet{\LFM{\fonctionprimal}}{0}} \not=\emptyset$.

  
  First, the function $\sigma_{\na{\LFM{\fonctionprimal} \le 0}}$ is (strictly positively)
  $1$-homogeneous convex lsc as a support function and smaller than
  $\fonctionprimal$ (by~\eqref{eq:bipolar_is_smaller}).
  Moreover, using using the fact that ${\MidLevelSet{\LFM{\fonctionprimal}}{0}} \not=\emptyset$ we obtain
  that $\sigma_{\na{\LFM{\fonctionprimal} \le 0}}$ is proper.
  Therefore, we have by definition
  of $\GreatestIn{\PHCCSubset}{\fonctionprimal}{\PRIMAL}$ that
  \begin{equation}
    \sigma_{\na{\LFM{\fonctionprimal} \le 0}} \le \GreatestIn{\PHCCSubset}{\fonctionprimal}{\PRIMAL}
    \eqfinp
    \label{sigma_less_g}
  \end{equation}
  Second, as the function~$\GreatestIn{\PHCCSubset}{\fonctionprimal}{\PRIMAL}$
  is (strictly positively) $1$-homogeneous, by definition, and proper, as 
  it is minorized by a proper function and thus cannot take the value $-\infty$,
    we have
  $-\infty < \GreatestIn{\PHCCSubset}{\fonctionprimal}{\PRIMAL}(0) \le
  \fonctionprimal(0) < + \infty$ since we have assumed $0\in\dom\fonctionprimal$.  We obtain, using
  Lemma~\ref{le:bifenchel-homogeneous}, that the greatest proper lsc convex
  function majorized by~$\GreatestIn{\PHCCSubset}{\fonctionprimal}{\PRIMAL}$
  is~$\SupportFunction{{\na{\LFM{\GreatestIn{\PHCCSubset}{\fonctionprimal}{\PRIMAL}}
        \le 0}}}$.  Now, as the
  function~$\GreatestIn{\PHCCSubset}{\fonctionprimal}{\PRIMAL}$ is also lsc
  convex by definition, it must coincide with its best proper lsc convex lower
  approximation, that is
  $\GreatestIn{\PHCCSubset}{\fonctionprimal}{\PRIMAL}=\SupportFunction{{\na{\LFM{\GreatestIn{\PHCCSubset}{\fonctionprimal}{\PRIMAL}}
        \le 0}}}$.
  
  Third, by definition of $\GreatestIn{\PHCCSubset}{\fonctionprimal}{\PRIMAL}$, we have $\GreatestIn{\PHCCSubset}{\fonctionprimal}{\PRIMAL} \le f$, and thus ${\na{\LFM{\GreatestIn{\PHCCSubset}{\fonctionprimal}{\PRIMAL}} \le 0}} \subset {\na{\LFM{\fonctionprimal} \le 0}}$.
  We obtain that
  $\GreatestIn{\PHCCSubset}{\fonctionprimal}{\PRIMAL} =\SupportFunction{\na{\LFM{\GreatestIn{\PHCCSubset}{\fonctionprimal}{\PRIMAL}} \le 0}} \le \SupportFunction{\na{\LFM{\fonctionprimal} \le 0}}$
  which combined with~\eqref{sigma_less_g} gives that
  $\GreatestIn{\PHCCSubset}{\fonctionprimal}{\PRIMAL}=\SupportFunction{\na{\LFM{\fonctionprimal} \le 0}}$.
  \medskip
  
  This concludes the proof.
\end{proof}

We give here an instrumental lemma used in the proof of the previous Proposition~\ref{prop:best_pos_hom_cvx_subset}.
\begin{lemma}
  \label{le:bifenchel-homogeneous}
  Assume that $\fonctionprimalbis: \PRIMAL \to \barRR$ is a (strictly positively) $1$-homogeneous proper function.
  Then, the greatest lsc proper convex lower approximation
  of the function~$\fonctionprimalbis$ is given by $\sigma_{\na{\LFM{\fonctionprimalbis} \le 0}}$.
\end{lemma}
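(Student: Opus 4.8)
The plan is to reduce the statement to the computation of the Fenchel biconjugate of~$\fonctionprimalbis$, using crucially that $\fonctionprimalbis$ is $1$-homogeneous.

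First I would show that, because $\fonctionprimalbis$ is (strictly positively) $1$-homogeneous and proper, its Fenchel conjugate $\LFM{\fonctionprimalbis}$ is exactly the indicator function $\Indicator{C}$ of the closed convex set $C = \MidLevelSet{\LFM{\fonctionprimalbis}}{0}$. The key observation is that, for a fixed $\dual \in \DUAL$, the quantity $\nscal{\primal}{\dual} - \fonctionprimalbis\np{\primal}$ becomes $\lambda\bp{\nscal{\primal}{\dual} - \fonctionprimalbis\np{\primal}}$ under $\primal \mapsto \lambda\primal$ with $\lambda \in \RR_{++}$; hence the supremum defining $\LFM{\fonctionprimalbis}\np{\dual}$ is either $+\infty$ --- as soon as $\nscal{\primal}{\dual} > \fonctionprimalbis\np{\primal}$ for some~$\primal$, by letting $\lambda\to+\infty$ --- or it is $\leq 0$, and in that case it is in fact $=0$, since picking any $\primal_0 \in \dom\fonctionprimalbis$ (nonempty because $\fonctionprimalbis$ is proper) and letting $\lambda\to 0^+$ shows the value~$0$ is approached from below. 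Note also that $\LFM{\fonctionprimalbis}$ never takes the value~$-\infty$ because $\fonctionprimalbis$ is proper. Once $\LFM{\fonctionprimalbis} = \Indicator{C}$ is established, the biconjugate follows at once: $\LFMbi{\fonctionprimalbis} = \LFMr{\Indicator{C}} = \SupportFunction{C}$ by the very definition of the support function.

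Next I would check that $\SupportFunction{C}$ is indeed the greatest lsc proper convex lower approximation of~$\fonctionprimalbis$. On the one hand $\SupportFunction{C} = \LFMbi{\fonctionprimalbis}$ is lsc convex and satisfies $\SupportFunction{C} \leq \fonctionprimalbis$, and when $C\neq\emptyset$ it is moreover proper, so it is a legitimate candidate. On the other hand, if $g \colon \PRIMAL \to \barRR$ is any lsc proper convex function with $g \leq \fonctionprimalbis$, then $\LFM{g} \geq \LFM{\fonctionprimalbis}$, hence $\LFMbi{g} \leq \LFMbi{\fonctionprimalbis} = \SupportFunction{C}$, while $g = \LFMbi{g}$ by the one-to-one correspondence induced by the Fenchel conjugacy on closed convex functions (\cite[Theorem~5]{Rockafellar:1974}, recalled in~\S\ref{Dual_pair,_paired_vector_spaces}); therefore $g \leq \SupportFunction{C}$. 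Alternatively, this maximality could be read off Proposition~\ref{prop:best_cvx_subset}, which already identifies $\LFMbi{\fonctionprimalbis}$ as the greatest lsc convex lower approximation of the proper function~$\fonctionprimalbis$.

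The argument is essentially routine; the two points that require care are (i) justifying that $\LFM{\fonctionprimalbis}$ is \emph{exactly} an indicator function, in particular the direction showing the supremum equals~$0$ and not $-\infty$, which is precisely where properness of~$\fonctionprimalbis$ enters, and (ii) the degenerate case $C = \MidLevelSet{\LFM{\fonctionprimalbis}}{0} = \emptyset$: there $\SupportFunction{C} \equiv -\infty$, and the computation of the previous paragraph shows that \emph{no} lsc proper convex function lies below~$\fonctionprimalbis$, so the stated formula must be read with $-\infty$ understood as the bottom of the ambient lattice of lsc convex functions (equivalently, as recording the absence of a proper lower approximation). I expect this last bookkeeping to be the only genuinely delicate step.
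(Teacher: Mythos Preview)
Your proposal is correct and follows essentially the same route as the paper: establish via the scaling $\primal\mapsto\lambda\primal$ that $\LFM{\fonctionprimalbis}$ takes only the values $0$ and $+\infty$ (the paper records this as two separate claims (i) and (ii) rather than phrasing it as $\LFM{\fonctionprimalbis}=\Indicator{C}$), and then read off $\LFMbi{\fonctionprimalbis}=\SupportFunction{C}$. Your additional bookkeeping on the degenerate case $C=\emptyset$ and the explicit verification of maximality are more careful than the paper, which simply invokes that $\LFMbi{\fonctionprimalbis}$ is the greatest lsc convex minorant of a proper function; neither addition changes the argument.
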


\begin{proof}
  We start by proving some properties of $\LFM{\fonctionprimalbis}$.
  
  \noindent $(i)$ If $\LFM{\fonctionprimalbis}(\dual) >0$ then $\LFM{\fonctionprimalbis}(\dual) = +\infty$.
  Indeed, if $\LFM{\fonctionprimalbis}(\dual) >0$ there exists $\alpha > 0$ and $\primal \in \PRIMAL$ such that
  $\nscal{\primal}{\dual} - \fonctionprimalbis(\primal) > \alpha$. Thus,  for all $\lambda > 0$, we obtain that
  \begin{align}
    \LFM{\fonctionprimalbis}(\dual)
    \ge \nscal{\lambda \primal}{\dual} - \fonctionprimalbis(\lambda \primal)
    = \lambda \bp{\nscal{\primal}{\dual} - \fonctionprimalbis( \primal)}
    \ge \lambda \alpha
    \eqfinv
  \end{align}
  and thus that $\LFM{\fonctionprimalbis}(\dual) = +\infty$ by letting $\lambda$
  goto $+\infty$.

  \noindent $(ii)$ If $\LFM{\fonctionprimalbis}(\dual) \le 0$, then $\LFM{\fonctionprimalbis}(\dual)=0$.
  Indeed, for all $\lambda >0$ and $\primal \in \PRIMAL$, we have that 
  \begin{equation*}
    \lambda \bp{\nscal{\primal}{\dual} - \fonctionprimalbis( \primal)}
    = \nscal{\lambda \primal}{\dual} - \fonctionprimalbis( \lambda \primal) \le \LFM{\fonctionprimalbis}(\dual) \le 0
    \eqfinv 
  \end{equation*}
  and the result follows by chosing $x$ such that $\fonctionprimalbis(x) \in \RR$,
  which exists as $\fonctionprimalbis$ is proper, and letting $\lambda$ go to $0$.

  Now, we turn to the proof of the Lemma.
  As $\fonctionprimalbis$ is assumed to be proper, the
  greatest lsc proper convex lower approximation of the function~$\fonctionprimalbis$ is given by
  the Fenchel biconjugate~$\LFMbi{ \fonctionprimalbis}$ in~\eqref{eq:Fenchel_biconjugate} of the function
  $\fonctionprimalbis$, which is 
  \begin{align*}
    \LFMbi{\fonctionprimalbis}
    &= 
      \sup_{\dual \in \DUAL} \bp{ \nscal{\cdot}{\dual} 
      -\LFM{\fonctionprimalbis}\np{\dual} }
    \\
    &= \sup \Bp{
      \sup_{\dual \in \na{\LFM{\fonctionprimalbis} >0 }}
      \bp{\nscal{\cdot}{\dual} -\LFM{\fonctionprimalbis}\np{\dual}}
      ,
      \sup_{\dual \in \na{\LFM{\fonctionprimalbis} \le 0 }}
      \bp{\nscal{\cdot}{\dual} -\LFM{\fonctionprimalbis}\np{\dual}}}
    \\
    &= \sup \Bp{
      \sup_{\dual \in \na{\LFM{\fonctionprimalbis} > 0}} \bp{\nscal{\cdot}{\dual} - (+ \infty)}
      ,
      \sup_{\dual \in \na{\LFM{\fonctionprimalbis} \le 0 }}
      \bp{\nscal{\cdot}{\dual}-0} }
      \tag{using $(i)$ and $(ii)$}
    \\
    &= \sup \bp{ -\infty ,  \sigma_{\na{\LFM{\fonctionprimalbis} \le 0}}}
    = \sigma_{\na{\LFM{\fonctionprimalbis} \le 0}}
      \eqfinp
  \end{align*}
  This ends the proof.
\end{proof}

\section{Background on *-dualities}
\label{Background_on_*-dualities}

We provide here the necessary background on *-dualities, as defined ans
studied in \cite{Martinez-Legaz-Singer:1994}. 

\subsection{Canonical enlargement of a complete totally ordered group}

A complete totally ordered (commutative\footnote{%
  See \cite[bottom of page~296]{Martinez-Legaz-Singer:1994} for why we restrict
  to commutative groups.}) group is a triplet
\( \np{\Group,\leq,\GroupOperation} \), where $\np{\Group,\leq}$ is a totally ordered
set (either $\group\leq \groupbis$ or $\groupbis\leq \group$)
$\np{\Group,\GroupOperation}$ is a (commutative) group --- such that all
translations are isotone
(\( \group\leq \groupbis \implies \group\GroupOperation\groupter\leq
\groupbis\GroupOperation\groupter\)), and any nonempty (order) bounded subset
admits a supremum and an infimum.

As defined and studied in \cite[Sect.~1]{Martinez-Legaz-Singer:1994}, we
describe the \emph{canonical enlargement}
\( \np{\barGroup,\leq,\UppGroupOperation,\LowGroupOperation} \) of a complete
totally ordered group \( \np{\Group,\leq,\GroupOperation} \) by
\begin{equation}
  \barGroup = \Group \cup \na{-\infty}  \cup \na{+\infty}
  \eqfinv
\end{equation}
with order extended by
\begin{equation}
  -\infty \leq \group \leq +\infty
  \eqsepv \forall \group \in \barGroup 
  \eqfinv
\end{equation}
and with \emph{upper composition}~$\UppGroupOperation$
and \emph{lower composition}~$\LowGroupOperation$ given by
\cite[Equations~(1.4)-(1.8)]{Martinez-Legaz-Singer:1994}
\begin{subequations}
  \begin{align}
    \group\UppGroupOperation\groupbis
    &=
      \group\LowGroupOperation\groupbis
      =       \group\GroupOperation\groupbis
      \eqsepv \forall \group,\groupbis \in \Group 
      \eqfinv
    \\
    \np{+\infty} \UppGroupOperation \group
    &= \group \UppGroupOperation \np{+\infty} = +\infty 
      \eqsepv \forall \group \in \barGroup 
      \eqfinv
    \\
    \np{-\infty} \UppGroupOperation \group
    &= \group \UppGroupOperation \np{-\infty} = -\infty 
      \eqsepv \forall \group \in \Group \cup \na{-\infty} 
      \eqfinv
    \\
    \np{+\infty} \LowGroupOperation \group
    &= \group \LowGroupOperation \np{+\infty} = +\infty 
      \eqsepv \forall \group \in \Group \cup \na{+\infty} 
      \eqfinv
    \\
    \np{-\infty} \LowGroupOperation \group
    &= \group \LowGroupOperation \np{-\infty} = -\infty 
      \eqsepv \forall \group \in \barGroup 
      \eqfinp 
  \end{align}
\end{subequations}
As~\( \GroupOperation \), the operations~$\UppGroupOperation$
and $\LowGroupOperation$ are associative and commutative on~$\barGroup$.
The unit element~$\UnitElement$ satisfies
\cite[Equation~(1.9)]{Martinez-Legaz-Singer:1994}
\begin{equation}
  \group\UppGroupOperation\UnitElement
  =\group\LowGroupOperation\UnitElement
  = \group
  \eqsepv \forall \group \in \barGroup 
  \eqfinv
\end{equation}
and we extend the group inverse operation~$\npConverse{\cdot}$ by
\cite[Equation~(1.11)]{Martinez-Legaz-Singer:1994}
\begin{equation}
  \npConverse{+\infty}=-\infty  \eqsepv
  \npConverse{-\infty}=+\infty  \eqfinp
\end{equation}
With the conventions        \cite[Equation~(1.13)]{Martinez-Legaz-Singer:1994}
\begin{equation}
  \inf \emptyset=+\infty  \eqsepv
  \sup \emptyset=-\infty  \text{ \qquad as elements of~}\barGroup \eqsepv
\end{equation}
we have the following properties,
where \( \group,\groupbis\) are any elements of~\( \barGroup \)
and \( \sequence{\group_i}{i\in I} \), \( \sequence{\groupbis_j}{j\in J} \),
are any collections, indexed by any sets~$I$, $J$, 
with values in~\( \barGroup \)
\begin{subequations}
  \begin{align}
    \npConverse{ \inf_{i\in I} \group_i } 
    &=
      \sup_{i\in I} \Converse{\group_i }
      \eqsepv
      \npConverse{ \sup_{i\in I} \group_i }
      = \inf_{i\in I} \Converse{\group_i }
      \eqfinv 
      \label{eq:lower_upper_GroupOperation_converse}
    \\
    \inf_{i\in I} \np{\group\UppGroupOperation\group_i } 
    &=
      \group\UppGroupOperation \inf_{i\in I} \group_i
      \eqsepv
      \sup_{i\in I} \np{\group\LowGroupOperation\group_i } 
      =
      \group\LowGroupOperation \sup_{i\in I} \group_i
      \eqfinv
           \label{eq:lower_upper_GroupOperation_inf_sup_one_element}
    \\
    \inf_{i\in I, j\in J} \np{\group_i\UppGroupOperation\groupbis_j} 
    &=
      \inf_{i\in I}\group_i\UppGroupOperation\inf_{j\in J} \groupbis_j
      \eqsepv
      \sup_{i\in I, j\in J} \np{\group_i\UppGroupOperation\groupbis_j}
      \leq
      \sup_{i\in I}\group_i\UppGroupOperation\sup_{j\in J} \groupbis_j     
      \eqfinv
      \label{eq:lower_upper_GroupOperation_inf_sup}
    \\      
    \sup_{i\in I, j\in J} \np{\group_i\LowGroupOperation\groupbis_j} 
    &=
      \sup_{i\in I}\group_i\LowGroupOperation\inf_{j\in J} \groupbis_j
      \eqsepv
      \inf_{i\in I, j\in J} \np{\group_i\LowGroupOperation\groupbis_j}
      \geq
      \inf_{i\in I}\group_i\LowGroupOperation\inf_{j\in J} \groupbis_j     
      \eqfinv
    \\
    \groupbis < +\infty
    &\implies
      \inf_{i\in I} \np{\group_i\LowGroupOperation\groupbis}
      = \np{\inf_{i\in I}\group_i} \LowGroupOperation\groupbis
      \eqfinv
    \\
    -\infty < \groupbis
    & \implies
      \sup_{i\in I}\np{\group_i\UppGroupOperation\groupbis}
      = \np{\sup_{i\in I}\group_i}\UppGroupOperation\groupbis
      \eqfinv
  \end{align}
\end{subequations}
and the following properties,
where \( \group,\groupbis,\groupter \) are any elements of~\( \barGroup \), 
\begin{subequations}
  \begin{align}
    \groupbis \leq \groupter
    &\implies
      \group\UppGroupOperation\groupbis \leq \group\UppGroupOperation\groupter
      \eqsepv
      \group\LowGroupOperation\groupbis \leq \group\LowGroupOperation\groupter        
      \eqfinv
      \label{eq:GroupOperation_isotony}
    \\
    \npConverse{ \group\UppGroupOperation\groupbis }
    &=
      \Converse{\group}\LowGroupOperation\Converse{\groupbis}
      \eqsepv
      \npConverse{ \group\LowGroupOperation\groupbis }
      =
      \Converse{\group}\UppGroupOperation\Converse{\groupbis}
      \eqfinv
            \label{eq:converse_GroupOperation}
    \\
    \group\LowGroupOperation \groupbis
    &\leq
      \group\UppGroupOperation \groupbis
      \eqfinv 
      \label{eq:lower_leq_upper_GroupOperation}
    \\
    \Converse{\group}\UppGroupOperation\Converse{\groupbis}
    &\geq
      \npConverse{\group\UppGroupOperation\groupbis}
      \eqsepv
      \Converse{\group}\LowGroupOperation\Converse{\groupbis}
      \leq
      \npConverse{\group\LowGroupOperation\groupbis} 
      \eqfinv 
    \\
    \group\UppGroupOperation\Converse{\group}
    &\geq
      \UnitElement
      \eqsepv
      \group\LowGroupOperation\Converse{\group}
      \leq
      \UnitElement
      \eqfinv 
    \\
    \group \LowGroupOperation \Converse{\groupbis} \leq \UnitElement
    &\iff
      \group \leq \groupbis
      \iff
      \UnitElement \leq \groupbis \UppGroupOperation \Converse{\group} 
      \eqfinv
    \\
    \group \LowGroupOperation \Converse{\groupbis}
    \leq \groupter
    &\iff
      \group \leq \groupbis \UppGroupOperation \groupter
      \iff
      \group \LowGroupOperation \Converse{\groupter} \leq \groupbis
      \eqfinv 
      \label{eq:lower_upper_GroupOperation_comparisons_b}
    \\  
    \groupter \leq \groupbis \UppGroupOperation \Converse{\group}
    &\iff
      \group \LowGroupOperation \groupter \leq \groupbis
      \iff
      \group \leq \groupbis \UppGroupOperation \Converse{\groupter} 
      \eqfinv
    \\
    ( \group \UppGroupOperation \groupbis ) \LowGroupOperation \groupter 
    &\leq
      \group \UppGroupOperation ( \groupbis \LowGroupOperation \groupter ) 
\eqfinp
  \end{align}
\end{subequations}

\subsection{*-duality}
\label{*-duality}

Let be given two sets $\PRIMAL$ (``primal''), $\DUAL$ (``dual''), together with
a \emph{coupling} function \( \coupling \colon \PRIMAL \times \DUAL \to \barGroup \).  We
define a mapping
\( \Duality\np{\coupling} \colon \barGroup^\PRIMAL \to \barGroup^\DUAL \) as follows:
for any function \( \fonctionprimal \colon \PRIMAL \to \barGroup \), we define the
function \( \Duality\np{\coupling}\fonctionprimal \colon \DUAL \to \barGroup\), denoted
\( \SFM{\fonctionprimal}{\Duality\np{\coupling}} \), by
\cite[Equation~(2.9)]{Martinez-Legaz-Singer:1994}
\begin{subequations}
  \begin{equation}
    \SFM{\fonctionprimal}{\Duality\np{\coupling}}\np{\dual} = 
    \sup_{\primal \in \PRIMAL} \Bp{ \coupling\np{\primal,\dual} 
      \LowGroupOperation \bpConverse{\fonctionprimal\np{\primal} } }
    \eqsepv \forall \dual \in \DUAL
    \eqfinp
    \label{eq:*-duality_definition}
  \end{equation}
  By \cite[Corollary~2.1]{Martinez-Legaz-Singer:1994},
  and using \cite[Lemma~1.5]{Martinez-Legaz-Singer:1994}
  (which establishes
  equation~\eqref{eq:lower_upper_GroupOperation_comparisons_b}),   
  we have that\footnote{%
    In the last term of \cite[Equation~(2.15)]{Martinez-Legaz-Singer:1994},
    the $\overline{A}$ should be a~$A$ (personal communication of
    Juan-Enrique Mart\'\i nez-Legaz).}
  \begin{equation}
    \SFM{\fonctionprimal}{\Duality\np{\coupling}}\np{\dual} = 
    \inf\defset{\group\in\Group}{\coupling\np{\primal,\dual} \leq
      \fonctionprimal\np{\primal}\UppGroupOperation \group
      \eqsepv \forall \primal \in \PRIMAL }
    \eqfinv
        \label{eq:*-duality_as_an_inf}
  \end{equation}
\end{subequations}
and, by \cite[Theory~2.1]{Martinez-Legaz-Singer:1994},
\( \Duality\np{\coupling} \) satisfies the following properties,
that define a *-duality
\cite[Definition~2.3]{Martinez-Legaz-Singer:1994}:
\begin{subequations}
  \begin{align}
    \SFM{\np{\inf_{i\in I}\fonctionprimal_i}}{\Duality\np{\coupling}}
    &=
      \sup_{i\in I}\SFM{\fonctionprimal_i}{\Duality\np{\coupling}}
      \eqsepv \forall \sequence{\fonctionprimal_i}{i\in I} \subset \barGroup^\PRIMAL
      \eqfinv
    \\
    \SFM{\np{\fonctionprimal\UppGroupOperation\group}}{\Duality\np{\coupling}}
    &=
      \SFM{\fonctionprimal}{\Duality\np{\coupling}}
      \LowGroupOperation \Converse{\group}
      \eqsepv \forall \group \in \barGroup
      \eqfinp 
  \end{align}
\end{subequations}
We also have that (deduced from \cite[Equation~(2.27)]{Martinez-Legaz-Singer:1994}):
\begin{equation}
  \SFMbi{\fonctionprimal}{\Duality\np{\coupling}}
  \leq \fonctionprimal
  \label{eq:bipolar-appendix}
  \eqfinp 
\end{equation}

\newcommand{\noopsort}[1]{} \ifx\undefined\allcaps\def\allcaps#1{#1}\fi

\end{document}